\let\mathbb\mathds
\newtheorem{theorem}{Theorem}[section]
\newtheorem{definition}[theorem]{Definition}
\newtheorem{proposition}[theorem]{Proposition}
\newtheorem{lemma}[theorem]{Lemma}
\newtheorem{corollary}[theorem]{Corollary}
\newtheorem{remark}[theorem]{Remark}
\def\Card{\operatorname{Card}}
\def\MCard{\operatorname{MCard}}
    \newlength{\myarrowsize} 
    \newlength{\myoldlinewidth}
\tikzstyle{vecArrow} = [thick, decoration={markings,mark=at position
\tikzstyle{innerWhite} = [semithick, white,line width=1.4pt, shorten >= 4.5pt]
	\newcommand\POSITION[3]{%
	\begingroup
	\@tempdim@x=0cm
	\@tempdim@y=\paperheight
	\advance\@tempdim@x#1
	\advance\@tempdim@y-#2
	\put(\LenToUnit{\@tempdim@x},\LenToUnit{\@tempdim@y}){#3}%
	\endgroup
	}
\begin{document}

 \title[Towers of fully commutative elements of type  $\tilde B$ and  $\tilde D$]{On fully commutative 
elements of type $\tilde B$ and  $\tilde D$ \\ }
\author{Sadek AL HARBAT}
\address{Instituto de Matem\'atica y F\'\i sica, Universidad de Talca, Chile} 
\email{sadekharbat@inst-mat.utalca.cl}
 \thanks{IMAFI, Universidad de Talca. \\
Supported by FONDECYT grant number 3170544.} 

	\begin{abstract}
	We define a tower of injections of  $\tilde{B}$-type (resp. $\tilde{D}$-type) Coxeter groups $W(\tilde B_{n})$ (resp. $W(\tilde D_{n})$) for $n\geq 3$. Let $W^c(\tilde B_{n})$ (resp. $W^c(\tilde D_{n})$) be the set of  fully commutative elements in $W(\tilde B_{n})$ (resp. $W(\tilde D_{n})$), we classify  the elements of this set by giving a normal form for them. We define a $\tilde{B}$-type tower of Hecke algebras and we use the faithfulness at the Coxeter level to show that this last tower is a tower of injections.
	  We use this normal form to define two injections from $W^c(\tilde B_{n-1})$ into $W^c(\tilde B_{n})$. We then define the tower of affine Temperley-Lieb algebras of type $\tilde{B }$ and use the injections above to prove the faithfulness of this tower.   We follow the same track  for $\tilde{D}$-type objects. 
	\end{abstract}

\date{\today}
\subjclass[2010]{Primary 20F36, 20F55.}

 	\maketitle
	
	\keywords{Affine Coxeter groups; affine Hecke algebra;  affine Temperley-Lieb algebra; fully commutative elements.}

\section{Introduction} 

A great deal of this work is a continuation of \cite{Sadek_2016} and \cite{Sadek_2017}, that is: giving a reduced expression that is a normal form for any fully commutative element in $\tilde B$-type and  $\tilde D$-type Coxeter groups, viewing them as objects in infinite towers and using this view to show the injectivity of the associated towers of affine Temperley-Lieb algebras. In the above mentioned works    $\tilde A$-type and $\tilde C$-type 
were treated.\\

Let $(W,S)$  be a Coxeter system. We say that w in  $W $ is {\it fully commutative} if any reduced expression for $ w$
can be obtained from any other using only commutation relations among the members of the set $S$. Call the set of such elements $W^c$.\\

Enumerating fully commutative elements by a normal form is an interesting subject in its own (see Stembridge \cite{St} for the three infinite families of finite Coxeter groups),  and also has important applications and consequences. For the   infinite affine families, the author  treated the $\tilde A$-type for a question coming from braid group theory, and,  encouraged by  a question of Luis Paris,   went on, in the same spirit,  to treat  the three other affine cases.  

First of all it has applications to the study of Temperley-Lieb algebras -- which are quotients of Hecke algebras -- since  fully commutative elements in a Coxeter group index at least two known  bases of the corresponding Temperley-Lieb algebra. 
In particular, as  a consequence of the normal form for affine types $\tilde A$ to $\tilde D$, we can prove the injectivity  of  associated towers of Temperley-Lieb algebras, by producing 
  injective towers of the sets of fully commutative elements. 
It can also be a decisive tool in the study of traces, particularly traces of Markov type, on those towers of  Temperley-Lieb algebras, as was done in \cite{Sadek_2013_2}. 

Other than that, by providing accessible computations of some Kazhdan-Lusztig polynomials, as was done in 
\cite{Green2009},  it can give a way to attack  the study of the 0-1 conjecture, or of a fully commutative version of this conjecture since the general formulation has proved wrong \cite{MW}. 

In yet another direction, 
 the author in  a forthcoming work is giving an explicit Coxeter-length generating function for fully commutative elements as a direct application of this enumeration, moreover an {\it affine-length} generating function (see below). This gives a clear idea about the growth of the infinite-dimensional affine  Temperley-Lieb algebras.\\

Towers are to be defined here on four levels: Coxeter groups and Hecke algebras, fully commutative elements and  Temperley-Lieb algebras; and  in two affine types  $\tilde B$ and  $\tilde D$. We prove the injectivity on the Coxeter group level,   a crucial step towards  injectivity 
on the Hecke level, which is more technical  though.  

We gave proofs of injectivity 
of the tower of affine Hecke algebras  over the field $\mathbb Q[q, q^{-1}]$ in \cite[Proposition 4.3.3]{Sadek_Thesis} for type $\tilde A$ and in 
\cite[Proposition 3.3]{Sadek_2017}  for type $\tilde C$, using specialization at $q=1$,  that maps Hecke algebras onto group algebras.  
We give here  a proof of injectivity for type   $\tilde B$ which is independent of the ring of definition and goes as well for   type $\tilde C$, hence extending the result in \cite{Sadek_2017}. But for  $\tilde D$-type the proof keeps similar to type $\tilde A$ and needs the  same condition on the ground ring.

Let us pause for this. An element in a Coxeter group is called {\it rigid} if it has only one  reduced expression. In the monomorphisms 
$W(\tilde B_n) \hookrightarrow W(\tilde B_{n+1})$ and  $W(\tilde C_n) \hookrightarrow W(\tilde C_{n+1})$ 
defined below, the Coxeter generators of the first group map to rigid elements of the second; however this does  not hold for $W(\tilde A_n) \hookrightarrow W(\tilde A_{n+1})$ nor  $W(\tilde D_n) \hookrightarrow W(\tilde D_{n+1})$. 
The author believes that our proof 
for type   $\tilde B$  is generic for any reflexion subgroup of a Coxeter group whenever the reflexions are rigid!

Precisely, recall from \cite{Deo} that a subgroup $Y$ of a Coxeter group 
$(W,S)$  generated by reflexions is itself a Coxeter group $(Y, S_Y)$ with a canonical set of reflexions $S_Y$. Paolo Sentinelli  has recently asked  whether, in this situation,  the  corresponding Hecke algebras embed accordingly. 
Actually the existence of such a morphism is not guaranteed. For instance, there is no morphism of Hecke algebras attached to the monomorphism $\beta : W(\tilde B_n) \hookrightarrow 
W(\tilde C_n) $ in~\S \ref{group tower} (see Remark \ref{braid}).   
The author believes the following: if there exists a corresponding homomorphism of Hecke algebras, then it is injective, and  
  in the case where the elements of $S_Y$ are rigid in $(W,S)$,   that it can be proved by elaborating on the  present proof in type  $\tilde B$. \\

The classification of fully commutative elements  and the proof of injectivity of the towers  of  Temperley-Lieb algebras in the four affine types are based on the notion of "affine length", this is to be explained briefly as follows: we choose to see any affine Coxeter group as an "affinization" of a finite Coxeter group in the obvious  way (but for the case of  $\tilde B$-type which can be viewed as an extension of either $B$-type or $D$-type, we choose  the latter for technical reasons). This "affinization" is simply adding a simple reflexion, say $a$, as follows:  \\

  \begin{figure}[ht]
      \begin{tikzpicture}
         \begin{scope}[scale =  0.6]
            \filldraw (0,0) circle (2pt); 
            \node at (0,-0.5) { }; 
            \draw (0,0) -- (1.5, 0);
 
            \filldraw (1.5,0) circle (2pt);
            \node at (1.5,-0.5) { };
            \draw (1.5,0) -- (2.5, 0);
            
            \node at (3.5,0) {$\dots$};
            
            \draw (4.5,0) -- (5.5, 0);
            \filldraw (5.5,0) circle (2pt);
            \node at (5.5,-0.5) { };
      
            \draw (5.5,0) -- (7, 0);
            \filldraw (7,0) circle (2pt);
            \node at (7,-0.5) { };           
            
         \end{scope}
         \begin{scope}[scale =  0.6, xshift = 14cm]
         
            \draw[line width =1.7pt][->] (-4.5,0) -- (-1.5,0);
            \filldraw (0,0) circle (2pt); 
            \node at (0,-0.5) { }; 
            \draw (0,0) -- (1.5, 0);
 
            \filldraw (1.5,0) circle (2pt);
            \node at (1.5,-0.5) { };
            \draw (1.5,0) -- (2.5, 0);
            
            \node at (3.5,0) {$\dots$};
            
            \draw (4.5,0) -- (5.5, 0);
            \filldraw (5.5,0) circle (2pt);
            \node at (5.5,-0.5) { };
      
            \draw (5.5,0) -- (7, 0);
            \filldraw (7,0) circle (2pt);
            \node at (7,-0.5) { };
            
            \filldraw (3.5,-3.5) circle (2pt);
            \node at (3.5,-4) {$a$};
            \draw[rotate=0,dashed] (0,0) -- (3.5,-3.5);
            \draw[rotate=0,dashed] (3.5,-3.5) -- (7,0);

         \end{scope}, yshift = 3.5cm

      \end{tikzpicture}
				 
   \end{figure}

  \begin{figure}[ht]
      \begin{tikzpicture}

         \begin{scope}[scale =  0.55, yshift = -9.5cm, xshift = -0cm]
            \filldraw (-1.5,0) circle (2pt);
            \node at (-1.5,-0.5) { }; 
            
            \draw (-1.5,-0.07) -- (0, -0.07);
            \draw (-1.5,0.07) -- (0, 0.07);
            
            \filldraw (0,0) circle (2pt);
            \node at (0,-0.5) { }; 
            
            \draw (0,0) -- (1.5, 0);
            \filldraw (1.5,0) circle (2pt);
            \node at (1.5,-0.5) { };
            
            \draw (1.5,0) -- (3, 0);
            
            \node at (3.5,0) {$\dots$};
            
            \draw (4,0) -- (5.5, 0);
            
            \filldraw (5.5,0) circle (2pt);
            \node at (5.5,-0.5) { };
            
            \draw (5.5,0) -- (7, 0);
            
            \filldraw (7,0) circle (2pt);
            \node at (7,-0.5) { }; 
         \end{scope}
         
         \begin{scope}[scale =  0.55, yshift = -9.5cm, xshift = 15.5cm]
            \draw[line width =1.7pt][->] (-5.5,0) -- (-2.5,0);
            \filldraw (-1.5,0) circle (2pt);
            \node at (-1.5,-0.5) { }; 
            
            \draw (-1.5,-0.07) -- (0, -0.07);
            \draw (-1.5,0.07) -- (0, 0.07);
            
            \filldraw (0,0) circle (2pt);
            \node at (0,-0.5) { }; 
            
            \draw (0,0) -- (1.5, 0);
            \filldraw (1.5,0) circle (2pt);
            \node at (1.5,-0.5) { };
            
            \draw (1.5,0) -- (3, 0);
            
            \node at (3.5,0) {$\dots$};
            
            \draw (4,0) -- (5.5, 0);
            
            \filldraw (5.5,0) circle (2pt);
            \node at (5.5,-0.5) { };
            
            \draw (5.5,0) -- (7, 0);
            
            \filldraw (7,0) circle (2pt);
            \node at (7,-0.5) { }; 
            
            \draw[dashed] (7,-0.07) -- (8.5, -0.07);
            \draw[dashed] (7,0.07) -- (8.5, 0.07);
            
            \filldraw (8.5,0) circle (2pt);
            \node at (8.5,-0.5) {$a$}; 
         \end{scope}

      \end{tikzpicture}
				
   \end{figure}

  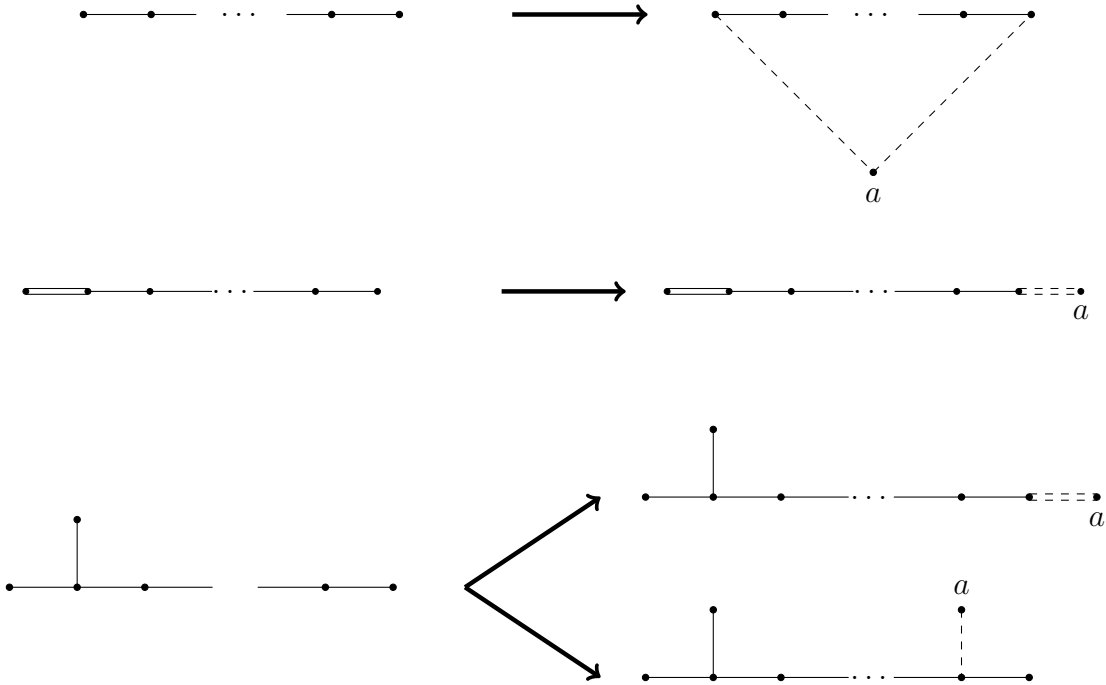
\begin{figure}[ht]
      \begin{tikzpicture}

         \begin{scope}[scale =  0.6, yshift = -18.5cm, xshift =40]
            \filldraw (-1.5,0) circle (2pt);
            \node at (-1.5,-0.5) { }; 
            
            \draw (-1.5,0) -- (0, 0);
            
            \filldraw (0,1.5) circle (2pt);
            \node at (0,2) { }; 
            
            \draw (0,1.5) -- (0, 0);
            
            \filldraw (0,0) circle (2pt);
            
            \node at (0,-0.5) { }; 
            
            \draw (0,0) -- (1.5, 0);
            
            \filldraw (1.5,0) circle (2pt);
            
            \node at (1.5,-0.5) { };
            
            \draw (1.5,0) -- (3, 0);
            
            \node at (3.5,0) { };
            
            \draw (4,0) -- (5.5, 0);
            
            \filldraw (5.5,0) circle (2pt);
            
            \node at (5.5,-0.5) { };
            
            \draw (5.5,0) -- (7, 0);
            
            \filldraw (7,0) circle (2pt);
            \node at (7,-0.5) { };            
         \end{scope}
         
         \begin{scope}[scale =  0.6, yshift = -16.5cm, xshift = 15.5cm]
         
            \draw[line width =1.7pt][->] (-5.5,-2) -- (-2.5,0);
            
            \filldraw (-1.5,0) circle (2pt);
            \node at (-1.5,-0.5) { }; 
            
            \draw (-1.5,0) -- (0, 0);
            
            \filldraw (0,1.5) circle (2pt);
            \node at (0,2) { }; 
            
            \draw (0,1.5) -- (0, 0);
            
            \filldraw (0,0) circle (2pt);
            
            \node at (0,-0.5) { }; 
            
            \draw (0,0) -- (1.5, 0);
            
            \filldraw (1.5,0) circle (2pt);
            
            \node at (1.5,-0.5) { };
            
            \draw (1.5,0) -- (3, 0);
            
            \node at (3.5,0) {$\dots$};
            
            \draw (4,0) -- (5.5, 0);
            
            \filldraw (5.5,0) circle (2pt);
            
            \node at (5.5,-0.5) { };
            
            \draw (5.5,0) -- (7, 0);
            
            \filldraw (7,0) circle (2pt);
            \node at (7,-0.5) { };  
               
            \draw[dashed] (7,-0.07) -- (8.5, -0.07);
            \draw[dashed] (7,0.07) -- (8.5, 0.07);
            
            \filldraw (8.5,0) circle (2pt);
            \node at (8.5,-0.5) {$a$};

         \end{scope}

         \begin{scope}[scale =  0.6, yshift = -20.5cm, xshift = 15.5cm]
         
            \draw[line width =1.7pt][->] (-5.5,2) -- (-2.5,0);
            
            \filldraw (-1.5,0) circle (2pt);
            \node at (-1.5,-0.5) { }; 
            
            \draw (-1.5,0) -- (0, 0);
            
            \filldraw (0,1.5) circle (2pt);
            \node at (0,2) { }; 
            
            \draw (0,1.5) -- (0, 0);
            
            \filldraw (0,0) circle (2pt);
            
            \node at (0,-0.5) { }; 
            
            \draw (0,0) -- (1.5, 0);
            
            \filldraw (1.5,0) circle (2pt);
            
            \node at (1.5,-0.5) { };
            
            \draw (1.5,0) -- (3, 0);
            
            \node at (3.5,0) {$\dots$};
            
            \draw (4,0) -- (5.5, 0);
            
            \filldraw (5.5,0) circle (2pt);
            \filldraw (5.5,1.5) circle (2pt);

            \draw[dashed] (5.5,1.5) -- (5.5, 0);
            \node at (5.5,2) {$a$};

            \node at (5.5,-0.5) { };
            
            \draw (5.5,0) -- (7, 0);
            
            \filldraw (7,0) circle (2pt);
            \node at (7,-0.5) { };            
         \end{scope}

      \end{tikzpicture}
				  \caption{$A$ to  $\tilde A$ ;  $B$ to $\tilde C$ ; $D$ to  $\tilde B$ and $\tilde D$}
   \end{figure}

We thus define the {\it affine length}  of any fully commutative element  to be the number of occurrences of $a$ in a (hence any) reduced expression for this fully commutative element. More general: Let $(W,S)$ be any Coxeter group;  for any $w$ in $W ^c$, let $a$ be any element of $S$, then the number of occurrences of $a$ in any reduced expression of $w$ is independent of the reduced expression, by Matsumoto's theorem.

Moreover, by distinguishing this $a$ we see simply and directly the construction of towers of each type, 
based on the morphisms   in \S\ref{themorphisms}  that map, say, $a_n$ to a suitable conjugate of $a_{n+1}$ while fixing the other generators. In types  $\tilde B$ and $\tilde C$, these morphisms $L_n$ and $F_n$ have a remarkable property: the image of $a_n$ is rigid, as  mentioned previously. Furthermore, for those two types, the braid relation 
involving $a$ has even length so that  the {\it affine length} can be defined for any element of the Coxeter group.
Eventually we prove that these two morphisms preserve reduced expressions (Corollary \ref{longueurBC}), a key argument for proving the injectivity of the tower of Hecke algebras.    \\

This work is to be presented as follows: in the  section 2 we give our notation for the presentation of the Coxeter groups we will be working on, including the "affinization"    explained above. We define morphisms between them  that give rise to injective towers of affine Coxeter groups  of a given type (Corollary \ref{Coxeterinj}). With a closer examination we prove that in types $\tilde B$ and $\tilde C$ those morphisms preserve reduced expressions
(Theorem \ref{reduced} and 
Corollary \ref{longueurBC}).  

Then in section 3   we define the corresponding towers of Hecke algebras. We give a proof of their faithfulness in types  $\tilde B$  and $\tilde C$ over any ring (i.e. commutative ring with identity) where $q$ is invertible. The proof relies on Corollary \ref{longueurBC} that is not available in type $\tilde D$. For type $\tilde D$ the faithfulness of the tower of Hecke algebras can be proven on $\mathbb Q[q, q^{-1}]$ as was done before in \cite{Sadek_Thesis}. 
 \\

In section 4 we recall the normal form given by Stembridge for $D$-type fully commutative elements  \cite[Theorem 10.1]{St} and  present our normal form for $\tilde B$-type and $\tilde D$-type fully commutative elements.  
In both cases we distinguish between two types of elements of affine length at least 2, that we call simply {\it first type} elements and   {\it second type} elements.   Elements of affine length 1 cannot be attached to these two categories 
without ambiguity so we keep them apart.  

We show in   section 5  that  the normal form of a  fully commutative element in 
$W(\tilde B_n)$ (resp.$W(\tilde D_n)$   can be transformed  into  the normal form of a  fully commutative element in 
$W(\tilde B_{n+1})$ (resp.$W(\tilde D_{n+1})$  in two different ways, that actually coincide for first type elements and elements of affine length one. We obtain  two injective maps $I$ and $J$ that play an essential part 
  in section 6, where we study the morphisms of Temperley-Lieb algebras  
$Q_{n}: TL\tilde{B}_{n}(q)  \longrightarrow  TL\tilde{B}_{n+1}(q) $ and  
$P_{n}: TL\tilde{D}_{n}(q) \longrightarrow  TL\tilde{D}_{n+1}(q)$ induced from the morphisms of Hecke algebras of section 3. 

  Indeed, we can describe the image of a basis element indexed by a fully commutative element $w$ 
as a linear combination of basis elements in which the terms of highest affine length and highest Coxeter length are indexed by 
$I(w)$ and $J(w)$  (Lemma  \ref{formulaB} and Proposition \ref{formula2B}; 
Proposition \ref{formulaLge2D}) or by  $I(w)$ and some $I(\bar w)$ when $w$ has affine length 1 in type $\tilde D$  (Lemma \ref{formulaD}). This implies the faithfulness of the towers of Temperley-Lieb algebras:        Theorem \ref{RB}          and  Theorem \ref{RD}.

\section{Faithful towers  of Coxeter groups}\label{group tower}

Let $(W(\Gamma),S)$ be a Coxeter system with associated Coxeter diagram $\Gamma$. Let $w\in W(\Gamma)$ or simply $W$. We denote by $l(w)$ the length of a (any) reduced expression of $w$.   We define $\mathscr{L} (w) $ to be the set of $s\in S$ such that $l(sw)<l(w)$, in other terms  $s$ appears at the left edge of some reduced expression of $w$.  We define $\mathscr{R}(w)$ similarly, on the right.\\
 
\subsection{Presentations of   $D_{n+1}$,  $\tilde D_{n+1}$, $\tilde B_{n+1}$, $\tilde C_{n+1}$} 

For $ n \ge 3$ consider the $D$-type Coxeter group with $n+1$ generators $W(D_{n+1})$, with the following Coxeter diagram:\\
			
			\begin{figure}[ht]
				\centering
				\begin{tikzpicture}

 \filldraw (-1.5,0) circle (2pt);
  \node at (-1.5,-0.5) {$\sigma_1$}; 

  \draw (-1.5,0) -- (0, 0);

\filldraw (0,1.5) circle (2pt);
 \node at (0,2) {$ \sigma_{\bar 1}$}; 

  \draw (0,1.5) -- (0, 0);
    
  \filldraw (0,0) circle (2pt);
  \node at (0,-0.5) {$\sigma_{2}$}; 
   
  \draw (0,0) -- (1.5, 0);

  \filldraw (1.5,0) circle (2pt);
  \node at (1.5,-0.5) {$\sigma_{3}$};

  \draw (1.5,0) -- (3, 0);

  \node at (3.5,0) {$\dots$};

  \draw (4,0) -- (5.5, 0);
  
  \filldraw (5.5,0) circle (2pt);
  \node at (5.5,-0.5) {$\sigma_{n-1}$};
 
  \draw (5.5,0) -- (7, 0);
  
  \filldraw (7,0) circle (2pt);
  \node at (7,-0.5) {$\sigma_{n}$};

  

               \end{tikzpicture}
				 \caption{$D_{n+1}$}
			\end{figure}

Now let $W(\tilde{D}_{n+1}) $ be the affine Coxeter group of $\tilde{D}$-type with $n+2$ generators in which $W(D_{n+1})$ could be seen a parabolic subgroup in two ways. We make our choice by presenting $W(\tilde{D}_{n+1} ) $ with the following Coxeter diagram: \\
			
			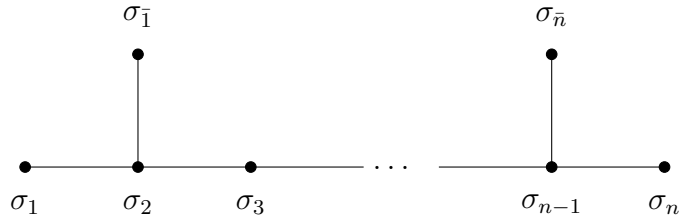
\begin{figure}[ht]
				\centering
				\begin{tikzpicture}

\filldraw (-1.5,0) circle (2pt);
  \node at (-1.5,-0.5) {$\sigma_1$}; 

  \draw (-1.5,0) -- (0, 0);

\filldraw (0,1.5) circle (2pt);
 \node at (0,2) {$ \sigma_{\bar 1}$}; 

  \draw (0,1.5) -- (0, 0);
    
  \filldraw (0,0) circle (2pt);
  \node at (0,-0.5) {$\sigma_{2}$}; 
   
  \draw (0,0) -- (1.5, 0);

  \filldraw (1.5,0) circle (2pt);
  \node at (1.5,-0.5) {$\sigma_{3}$};

  \draw (1.5,0) -- (3, 0);

  \node at (3.4,0) {$\dots$};

  \draw (4,0) -- (5.5, 0);
  
  \filldraw (5.5,0) circle (2pt);
  \node at (5.5,-0.5) {$\sigma_{n-1}$};
 
  \draw (5.5,0) -- (7, 0);
  
  \filldraw (5.5,1.5) circle (2pt);
 \node at (5.5,2) {$ \sigma_{\bar {n}}$}; 

  \draw (5.5,1.5) -- (5.5,0);
  
  \filldraw (7,0) circle (2pt);
  \node at (7,-0.5) {$\sigma_{n}$};

               \end{tikzpicture}
			 \caption{$\tilde D_{n+1}$}
			\end{figure}
In other words the group  $W(\tilde{D}_{n+1} ) $ has a presentation given by the set of generators 
$\{ \sigma_{1},  \sigma_{\bar 1}  \dots,  \sigma_{n-1},\sigma_{\bar {n}}, \sigma_{n} \}$ and the relations: 
$$ \begin{aligned}
&\sigma_{\bar 1}^2 =  \sigma_{\bar {n}}^2 =1 \text{ and } \sigma_i^2 = 1  \text{ for } 1\le i \le n ; \\ 
&\sigma_i \sigma_{j} = \sigma_{j} \sigma_i \text{ for } 1\le i, j \le n, \ |i-j|\ge  2 ; \\
&\sigma_i \sigma_{\bar 1} = \sigma_{\bar 1} \sigma_i ~~~~~~ \text{ for } i \not= 2 ; \\
&\sigma_i \sigma_{\bar {n}} =\sigma_{\bar {n}} \sigma_i \text{ for } i\not=n-1 ; \\
&\sigma_i \sigma_{i+1} \sigma_i = \sigma_{i+1} \sigma_i\sigma_{i+1}  \text{ for } 1\le i \le n-1;\\ 
&\sigma_2 \sigma_{\bar 1} \sigma_2 = \sigma_{\bar 1} \sigma_2\sigma_{\bar 1} ;\\ 
&\sigma_{n-1} \sigma_{\bar {n}} \sigma_{n-1} = \sigma_{\bar {n}} \sigma_{n-1}\sigma_{\bar {n}};\\ 
\end{aligned}
$$

Now let $W(\tilde{B}_{n+1}) $ be the affine Coxeter group of $\tilde{B}$-type with $n+2$ generators in which $W(D_{n+1})$ is naturally a parabolic subgroup, as seen in  the following Coxeter diagram: \\
			
				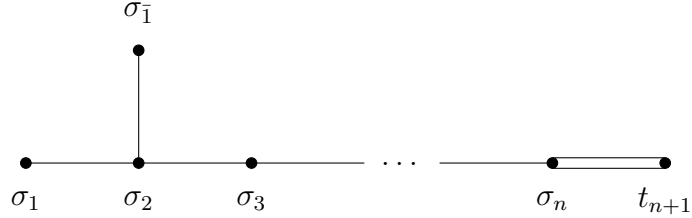
\begin{figure}[ht]
				\centering
				\begin{tikzpicture}

 \filldraw (-1.5,0) circle (2pt);
  \node at (-1.5,-0.5) {$\sigma_1$}; 

  \draw (-1.5,0) -- (0, 0);

\filldraw (0,1.5) circle (2pt);
 \node at (0,2) {$ \sigma_{\bar 1}$}; 

  \draw (0,1.5) -- (0, 0);
  
  \filldraw (0,0) circle (2pt);
  \node at (0,-0.5) {$\sigma_{2}$}; 
   
  \draw (0,0) -- (1.5, 0);

  \filldraw (1.5,0) circle (2pt);
  \node at (1.5,-0.5) {$\sigma_{3}$};

  \draw (1.5,0) -- (3, 0);

  \node at (3.5,0) {$\dots$};

  \draw (4,0) -- (5.5, 0);
  
  \filldraw (5.5,0) circle (2pt);
  \node at (5.5,-0.5) {$\sigma_{n}$};
 
  \draw (5.5,-0.07) -- (7, -0.07);
\draw (5.5,0.07) -- (7, 0.07);
  
  \filldraw (7,0) circle (2pt);
  \node at (7,-0.5) {$t_{n+1}$};

               \end{tikzpicture}
			 \caption{$\tilde B_{n+1}$}
			\end{figure}
In other words the group  $W(\tilde{B}_{n+1} ) $ has a presentation given by the set of generators 
$\{ \sigma_{1},  \sigma_{\bar 1}, \dots, \sigma_{n},t_{n+1} \}$ and the relations: 
$$ \begin{aligned}
&  t_{n+1}^2 = 1,   {\sigma_{\bar 1}}^2=1  \text{ and } \sigma_i^2 = 1  \text{ for } 1\le i \le n ; \\ 
&\sigma_i \sigma_{j} = \sigma_{j} \sigma_i \text{ for } 1\le i, j \le n, \ |i-j|\ge  2 ; \\
&\sigma_i t_{n+1} = t_{n+1}\sigma_i \text{ for } 1\le i < n ; \\ 
& \sigma_{\bar 1}  t_{n+1} = t_{n+1} \sigma_{\bar 1}    ; \\ 
&\sigma_i  \sigma_{\bar 1} = \sigma_{\bar 1}\sigma_i \text{ for } i=1 \text{ or } 3\le i   ; \\ 
&\sigma_i \sigma_{i+1} \sigma_i = \sigma_{i+1} \sigma_i\sigma_{i+1}  \text{ for } 1\le i \le n-1;\\ 
& \sigma_{\bar 1} \sigma_{2} \sigma_{\bar 1} =  \sigma_{2} \sigma_{\bar 1}\sigma_{2}   ; \\
&\sigma_n t_{n+1}\sigma_n t_{n+1}= t_{n+1}\sigma_n t_{n+1}\sigma_n. \\  
\end{aligned}
$$

\medskip		

Now we consider the  $\tilde C$-type Coxeter group with $n+2$ generators $W(\tilde{C}_{n+1}) $, having as Coxeter diagram:

			\begin{figure}[ht]
				\centering
				\begin{tikzpicture}

 \filldraw (-1.5,0) circle (2pt);
  \node at (-1.5,-0.5) {$t=\sigma_0$}; 

  \draw (-1.5,-0.07) -- (0, -0.07);
  \draw (-1.5,0.07) -- (0, 0.07);
  
  \filldraw (0,0) circle (2pt);
  \node at (0,-0.5) {$\sigma_{1}$}; 
   
  \draw (0,0) -- (1.5, 0);

  \filldraw (1.5,0) circle (2pt);
  \node at (1.5,-0.5) {$\sigma_{2}$};

  \draw (1.5,0) -- (3, 0);

  \node at (3.5,0) {$\dots$};

  \draw (4,0) -- (5.5, 0);
  
  \filldraw (5.5,0) circle (2pt);
  \node at (5.5,-0.5) {$\sigma_{n-1}$};
 
  \draw (5.5,0) -- (7, 0);
  
  \filldraw (7,0) circle (2pt);
  \node at (7,-0.5) {$\sigma_{n}$};

   \draw (7,-0.07) -- (8.5, -0.07);
  \draw (7,0.07) -- (8.5, 0.07);
  
   \filldraw (8.5,0) circle (2pt);
  \node at (8.5,-0.5) {$t_{n+1}$};

               \end{tikzpicture}
				 \caption{$\tilde C_{n+1}$}
			\end{figure}
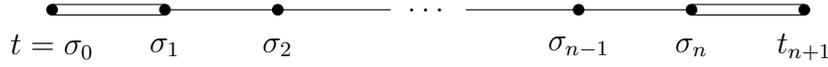

In other terms we generate $W(\tilde{C}_{n+1}) $ by 	 $\{ \sigma_{0}, \sigma_{1}, \dots, \sigma_{n},t_{n+1}\}$  subject to the following relations: 

$$ \begin{aligned}
&  t_{n+1}^2 = 1 \text{ and } \sigma_i^2 = 1  \text{ for } 0\le i \le n ; \\ 
&\sigma_i \sigma_{j} = \sigma_{j} \sigma_i \text{ for } 0\le i, j \le n, \ |i-j|\ge  2 ; \\
&\sigma_i t_{n+1} = t_{n+1}\sigma_i \text{ for } 0\le i < n ; \\ 
&\sigma_i \sigma_{i+1} \sigma_i = \sigma_{i+1} \sigma_i\sigma_{i+1}  \text{ for } 1\le i \le n-1;\\ 
&\sigma_0 \sigma_{1}\sigma_0 \sigma_{1} =  \sigma_{1}\sigma_0 \sigma_{1}\sigma_0  ; \\
&\sigma_n t_{n+1}\sigma_n t_{n+1}= t_{n+1}\sigma_n t_{n+1}\sigma_n.\\ \\
\end{aligned}
$$

\subsection{Faithfulness of towers of $\tilde B$-type and $\tilde D$-type affine Coxeter groups}\label{themorphisms}
		
In \cite[Corollary 2.2]{Sadek_2017} the author has defined a faithful tower of   $\tilde C$-type Coxeter groups  by defining  the following homomorphism for $n \ge 2$ and proving its
 injectivity: 
\begin{eqnarray}
					  F_n: W(\tilde C_{n }) &\longrightarrow& W(\tilde C_{n+1})\nonumber\\
					\sigma_{i} &\longmapsto& \sigma_{i}$ ~~~ \text{for} $0\leq i\leq n-1 \nonumber\\
					t_{n} &\longmapsto& {\sigma_n} t_{n+1} {\sigma_n} 
	\\ \nonumber
				\end{eqnarray}
We define in a similar way the two following homomorphisms: 
\begin{eqnarray}\label{Ln}   
					  L_n: W(\tilde B_{n }) &\longrightarrow& W(\tilde B_{n+1})\nonumber\\
					\sigma_{i} &\longmapsto& \sigma_{i}$ ~~~ \text{for} $1\leq i\leq n-1 \nonumber\\
					\sigma_{\bar 1} &\longmapsto& \sigma_{\bar 1}  \nonumber\\
					t_{n} &\longmapsto& {\sigma_n} t_{n+1} {\sigma_n} 
	\\ \nonumber
				\end{eqnarray}
\begin{eqnarray}
					  G_n: W(\tilde D_{n }) &\longrightarrow& W(\tilde D_{n+1})\nonumber\\
					\sigma_{i} &\longmapsto& \sigma_{i}$ ~~~ \text{for} $1\leq i\leq n-1 \nonumber\\
					\sigma_{\bar 1} &\longmapsto& \sigma_{\bar 1}  \nonumber\\
					\sigma_{\bar{n-1}} &\longmapsto&\sigma_n \sigma_{n-1} \sigma_{\bar {n}} \sigma_{n-1}\sigma_n  \\
 \nonumber
				\end{eqnarray}

In 
\cite[\S 1.13]{Lusztig} we see that for $n>2$, in $W(\tilde{C}_{n+1} ) $ the subgroup generated by  $\{ t\sigma_{1}t, \sigma_{1}, \sigma_{2}, \dots,  \sigma_{n}, t_{n+1} \}$ is exactly $W(\tilde{B}_{n+1} ) $, and in the later the subgroup generated by $\{ t\sigma_{1}t, \sigma_{1}, \sigma_{2}, \dots, \sigma_{n}, t_{n+1}\sigma_{n}t_{n+1}  \}$ is indeed $W(\tilde{D}_{n+1} ) $. It is convenient to denote respectively $\beta $ and $\delta_{n+1} $ those inclusion maps (that is, 
$\beta (\sigma_{\bar 1})=t\sigma_{1}t$ and  $\delta_{n+1}( \sigma_{\bar {n}})=t_{n+1}\sigma_{n}t_{n+1} $   , in such a way that we have the following lemma: \\

 \begin{lemma} 
     
     The diagram commutes  for any $n > 2$

\begin{center}    

	\begin{tikzpicture}

			\matrix[matrix of math nodes,row sep=1cm,column sep=1cm]{
			|(A)| W(\tilde{C}_{n})    & & & &    |(B)| W(\tilde{C}_{n+1} ) \\
			                              & & & &                      \\								
			|(C)|   W(\tilde{B}_{n} )             & & & &    |(D)|   W(\tilde{B}_{n+1} )      \\
			                             & & & &                      \\								
			|(E)|   W(\tilde{D}_{n} )             & & & &    |(F)|   W(\tilde{D}_{n+1} )      \\
				};

				\path (A) edge[-myhook,line width=0.42pt]  node[above, xshift=-5mm, yshift=-2mm, rotate=0] 
{\footnotesize $\beta $}    (C);
\path (C) edge[-myto,line width=0.42pt]      (A);

	\path (C) edge[-myhook,line width=0.42pt]  node[above, xshift=-5mm, yshift=-2mm, rotate=0] 
{\footnotesize $\delta_{n} $}    (E);
\path (E) edge[-myto,line width=0.42pt]      (C);
				
					\path (B) edge[-myhook,line width=0.42pt]  node[above, xshift=1.5mm, yshift=0mm, rotate=0] {\footnotesize $F_{n}$}      (A);
\path (A)  edge[-myto,line width=0.42pt]    (B);

				\path (C) edge[-myto,line width=0.42pt]   node[below, xshift=1.5mm, yshift=0mm, rotate=0]    {\footnotesize $L_{n}$}      (D);

				\path (E) edge[-myto,line width=0.42pt]   node[below, xshift=1.5mm, yshift=0mm, rotate=0]    {\footnotesize $G_{n}$}      (F);

				 \path (B) edge[-myhook,line width=0.42pt]  node[above, xshift=-5mm, yshift=-2mm, rotate=0] 
{\footnotesize $\beta $} (D);
				\path (D) edge[-myto,line width=0.42pt]     (B);

				 \path (D) edge[-myhook,line width=0.42pt]  node[above, xshift=-5mm, yshift=-2mm, rotate=0] 
{\footnotesize $\delta_{n+1} $}    (F);
				\path (F) edge[-myto,line width=0.42pt]     (D);

		\end{tikzpicture}	
	\end{center}

     \end{lemma}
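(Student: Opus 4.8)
The plan is to reduce the statement to the commutativity of the two squares that make up the ladder: the upper square, with vertical inclusions $\beta$ and horizontal maps $F_n,L_n$, and the lower square, with vertical inclusions $\delta_n,\delta_{n+1}$ and horizontal maps $G_n,L_n$. Since every group in the diagram is given by a Coxeter presentation and every arrow is a group homomorphism, two parallel composites out of a fixed group coincide as soon as they agree on each generator of the source. So I would fix a square and simply evaluate both composites on the generators of the source group and compare the results in the target; no relations need to be checked, only images of generators.

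For the upper square I would verify $\beta\circ L_n=F_n\circ\beta$ on the generators $\sigma_1,\dots,\sigma_{n-1},\sigma_{\bar 1},t_n$ of $W(\tilde B_n)$. On each $\sigma_i$ with $1\le i\le n-1$ both composites return $\sigma_i$, and on $\sigma_{\bar 1}$ both return $\sigma_0\sigma_1\sigma_0$, since $F_n$ fixes every $\sigma_i$ with $0\le i\le n-1$ and $\beta$ fixes the shared generators. The only generator that moves is $t_n$: along $F_n\circ\beta$ it goes to $F_n(t_n)=\sigma_n t_{n+1}\sigma_n$, while along $\beta\circ L_n$ it goes to $\beta(\sigma_n t_{n+1}\sigma_n)=\sigma_n t_{n+1}\sigma_n$, because $\beta$ fixes $\sigma_n$ and $t_{n+1}$. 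Hence the upper square commutes, and this part is entirely routine.

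The genuine content is the lower square $\delta_{n+1}\circ G_n=L_n\circ\delta_n$, which I would test on the generators $\sigma_1,\dots,\sigma_{n-1},\sigma_{\bar 1},\sigma_{\bar{n-1}}$ of $W(\tilde D_n)$. On each $\sigma_i$ and on $\sigma_{\bar 1}$ both composites act as the identity on generators, so the only nontrivial check is $\sigma_{\bar{n-1}}$. One route gives
\[
\delta_{n+1}\bigl(G_n(\sigma_{\bar{n-1}})\bigr)=\sigma_n\sigma_{n-1}\,t_{n+1}\sigma_n t_{n+1}\,\sigma_{n-1}\sigma_n,
\]
using $\delta_{n+1}(\sigma_{\bar n})=t_{n+1}\sigma_n t_{n+1}$ and $\delta_{n+1}(\sigma_i)=\sigma_i$, while the other route gives
\[
L_n\bigl(\delta_n(\sigma_{\bar{n-1}})\bigr)=L_n(t_n)\,\sigma_{n-1}\,L_n(t_n)=\sigma_n t_{n+1}\sigma_n\,\sigma_{n-1}\,\sigma_n t_{n+1}\sigma_n .
\]
I expect this identity to be the one real obstacle, and I would settle it inside $W(\tilde B_{n+1})$ by elementary moves: first cancel the outer $\sigma_n$'s (as $\sigma_n^2=1$), reducing the claim to $\sigma_{n-1}t_{n+1}\sigma_n t_{n+1}\sigma_{n-1}=t_{n+1}\sigma_n\sigma_{n-1}\sigma_n t_{n+1}$; then push both $\sigma_{n-1}$'s through $t_{n+1}$ via the commutation $\sigma_{n-1}t_{n+1}=t_{n+1}\sigma_{n-1}$ (valid since $n-1<n$), turning the left side into $t_{n+1}\sigma_{n-1}\sigma_n\sigma_{n-1}t_{n+1}$; finally apply the braid relation $\sigma_{n-1}\sigma_n\sigma_{n-1}=\sigma_n\sigma_{n-1}\sigma_n$ to recover the right side. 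The hypothesis $n>2$ is precisely what guarantees that $\sigma_{n-1}$ and $\sigma_n$ are distinct adjacent (non-commuting) generators while $t_{n+1}$ commutes with $\sigma_{n-1}$, so all these moves are legitimate.

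Finally, the remaining arrows of the ladder are the same inclusions $\beta,\delta_n,\delta_{n+1}$ drawn in the opposite orientation, and every other cell of the diagram one wishes to read off is a composite of the two basic squares together with these inclusions; its commutativity follows from exactly the same generator-by-generator verification. Thus the whole diagram commutes for every $n>2$, the only non-formal step being the braid identity highlighted above.
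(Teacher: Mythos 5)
Your proof is correct: the paper states this lemma without giving a proof, and the generator-by-generator verification you supply is exactly the intended argument, the only non-routine point being the identity $\sigma_n\sigma_{n-1}\,t_{n+1}\sigma_n t_{n+1}\,\sigma_{n-1}\sigma_n=\sigma_n t_{n+1}\sigma_n\,\sigma_{n-1}\,\sigma_n t_{n+1}\sigma_n$ in $W(\tilde{B}_{n+1})$, which you settle correctly via the commutation $\sigma_{n-1}t_{n+1}=t_{n+1}\sigma_{n-1}$ and the braid relation $\sigma_{n-1}\sigma_n\sigma_{n-1}=\sigma_n\sigma_{n-1}\sigma_n$. All the computations check out against the presentations and the definitions of $\beta$, $\delta_n$, $F_n$, $L_n$, $G_n$ given in the paper.
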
     
     
       \begin{corollary}\label{Coxeterinj}  $L_n$ and $G_n$ are injections. \\  \end{corollary}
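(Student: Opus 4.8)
The plan is to reduce the statement to a short diagram chase, using the commutativity established in the previous Lemma together with the injectivity of the $\tilde C$-tower map $F_n$ proved in \cite[Corollary 2.2]{Sadek_2017}. The only general principle I will invoke is the elementary fact that whenever a composite $g\circ f$ of group homomorphisms is injective, the first factor $f$ is injective. The vertical arrows of the diagram, namely $\beta$ and $\delta_n$, are honest inclusions of reflexion subgroups in the sense of \cite{Deo}, hence injective; so the whole argument rests on transporting injectivity across the commuting squares.

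First I would handle $L_n$. Reading off the face of the diagram joining the $\tilde C$-row to the $\tilde B$-row gives the relation
\[ \beta\circ L_n = F_n\circ\beta , \]
as maps $W(\tilde B_n)\to W(\tilde C_{n+1})$ (the left $\beta$ being the inclusion $W(\tilde B_{n+1})\hookrightarrow W(\tilde C_{n+1})$ and the right one the inclusion $W(\tilde B_n)\hookrightarrow W(\tilde C_n)$); one checks it at once on the generators, since $\beta\circ L_n$ and $F_n\circ\beta$ both fix the common $\sigma_i$, both send $t_n$ to $\sigma_n t_{n+1}\sigma_n$, and both send $\sigma_{\bar 1}$ to $t\sigma_1 t$. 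The right-hand side $F_n\circ\beta$ is a composite of injections, hence injective; therefore $\beta\circ L_n$ is injective, and consequently $L_n$ is injective.

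Then I would handle $G_n$ via the face joining the $\tilde B$-row to the $\tilde D$-row, which reads
\[ \delta_{n+1}\circ G_n = L_n\circ\delta_n . \]
Here $\delta_n$ is injective as an inclusion, and $L_n$ is injective by the step just completed, so the right-hand side is injective; hence $\delta_{n+1}\circ G_n$ is injective, and therefore $G_n$ is injective. I do not expect a genuine obstacle here: all the substance is already contained in the commutativity Lemma and in the cited injectivity of $F_n$. The only point demanding care is the logical order --- the injectivity of $G_n$ is bootstrapped from that of $L_n$, which itself rests on $F_n$ --- so the two cases must be carried out in sequence, over the common range $n>2$ inherited from the Lemma, rather than independently.
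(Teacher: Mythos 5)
Your proposal is correct and is precisely the argument the paper intends: the corollary is stated as an immediate consequence of the commuting diagram, with injectivity transported from $F_n$ (cited from the $\tilde C$ paper) through the inclusions $\beta$ and $\delta$, first to $L_n$ and then to $G_n$. Your explicit generator checks and the remark about the logical order ($G_n$ bootstrapped from $L_n$) simply make explicit what the paper leaves implicit.
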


\begin{remark}\label{braid}  {\rm
The horizontal morphisms in the previous lemma, namely $F_n$, $L_n$   and $G_n$,  can be defined on the corresponding braid groups (by mapping $t_n$ to 
${\sigma_n} t_{n+1} {\sigma_n}^{-1}$ for $F_n$ and $L_n$, or mapping 
	$\sigma_{\bar{n-1}}$ to $\sigma_n \sigma_{n-1} \sigma_{\bar {n}} \sigma_{n-1}^{-1}\sigma_n^{-1}$
for $G_n$). They thus define morphisms of algebra on the braid group algebras and, after taking quotients, on the Hecke algebras studied in the next section. This is not the case for   $\delta_n$ or $\beta$.}\\  
\end{remark}

\subsection{Further properties of the towers in type $\tilde B$ and $\tilde C$}  

We now remark that,  in the groups $W(\tilde{B}_{n+1}) $ and $W(\tilde{C}_{n+1}) $, the only braid relation involving $t_{n+1}$ (apart from commutation relations) is 
\begin{eqnarray}\label{braid4}
t_{n+1}\sigma_n t_{n+1}\sigma_n = \sigma_n t_{n+1}\sigma_n t_{n+1}
\end{eqnarray}
 where  the number of occurrences of 
$t_{n+1}$ is the same on both sides. It follows (recall \cite[\S 1.5 Proposition 5]{Bourbaki_1981}) that  the number of times   $t_{n+1}$ occurs in a 
 reduced expression of an element of   $ W(\tilde{B}_{n+1}) $  or $W(\tilde{C}_{n+1}) $ does not depend of this reduced expression.

			\begin{definition} \label{ALB} Let $u \in W (\tilde B_{n+1})$ or $W(\tilde{C}_{n+1}) $. 
				We define the {\rm affine length}  of $u$  to be the number of times  $t_{n+1}$ occurs   in a (any) 
 reduced expression of $u$. We denote  it by $L(u)$. 
			\end{definition}

In the group $W(\tilde{D}_{n+1}) $, the braid relation 
 $\sigma_{n-1}\sigma_{\bar n}\sigma_{n-1} = \sigma_{\bar n} \sigma_{n-1}\sigma_{\bar n} $ 
prevents us from giving the same definition, except for fully commutative 	elements as we will see below 
(Definition \ref{ALD}).\\

We will now examine more closely the monomorphisms  $F_n: W(\tilde C_{n })  \longrightarrow  W(\tilde C_{n+1})$ 
and   $L_n: W(\tilde B_{n })  \longrightarrow  W(\tilde B_{n+1})$ 
 and show that they preserve the affine length and transform the Coxeter length of some $w$ into  
$l(w)+2L(w)$. In other words, when substituting ${\sigma_n} t_{n+1} {\sigma_n} $ to $t_n$ in a reduced expression for $w$, we obtain a reduced expression for the image of $w$. 

We let in this subsection $W_n$ be $W(\tilde C_{n })$ or  $W(\tilde B_{n })$, with set of generators 
$S_n = \Sigma \cup \{t_n\}$  while $W_{n+1}$ has $S_{n+1} = \Sigma \cup \{\sigma_n, t_{n+1}\}$ as set of generators. 
We let $I$ be either $F_n$ or $L_n$, i.e. $I$ is the monomorphism from $W_n$ to $W_{n+1}$ that is the identity on generators in $\Sigma$ and maps $t_n$ to $\sigma_n  t_{n+1} \sigma_n$. We also let $W_\Sigma$ be the subgroup of $W_n$ generated by $\Sigma$.

\begin{lemma}

Let $x$ be in $I(W_n)$ then:  \\

(a) $t_{n+1} x= xt_{n+1}$ ; \\

(b) If $\sigma_n x= x\sigma_n$ then  $I(t_{n}) x= x I(t_{n})$; \\

(c) If $xt_{n+1} \sigma_n  = \sigma_n t_{n+1}x$ then $I(t_{n}) x= x I(t_{n})$; \\

  \end{lemma}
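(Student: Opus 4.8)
The plan is to establish (a) first and then obtain (b) and (c) from it by elementary manipulations in the group, using only the defining relations of $W_{n+1}$ and the single braid relation \eqref{braid4} linking $\sigma_n$ and $t_{n+1}$. Throughout I will use that $I(t_n)=\sigma_n t_{n+1}\sigma_n$, that $t_{n+1}^2=\sigma_n^2=1$, and that $I(t_n)$ is therefore an involution. For (a), the centralizer of $t_{n+1}$ in $W_{n+1}$ is a subgroup, so it suffices to check that $t_{n+1}$ commutes with a generating set of $I(W_n)$, namely $\Sigma\cup\{I(t_n)\}$. Each $\sigma\in\Sigma$ commutes with $t_{n+1}$ directly from the presentation of $W(\tilde B_{n+1})$ (resp.\ $W(\tilde C_{n+1})$), since in both types every generator other than $\sigma_n$ is declared to commute with $t_{n+1}$. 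For the remaining generator $I(t_n)=\sigma_n t_{n+1}\sigma_n$, the identity $t_{n+1}I(t_n)=t_{n+1}\sigma_n t_{n+1}\sigma_n=\sigma_n t_{n+1}\sigma_n t_{n+1}=I(t_n)t_{n+1}$ is precisely \eqref{braid4}. Hence $t_{n+1}$ centralizes $I(W_n)$, which is (a).

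Part (b) is then a three-line computation: assuming $\sigma_n x=x\sigma_n$ and using (a), one has $I(t_n)x=\sigma_n t_{n+1}\sigma_n x=\sigma_n t_{n+1}x\sigma_n=\sigma_n x t_{n+1}\sigma_n=x\sigma_n t_{n+1}\sigma_n=xI(t_n)$, where the hypothesis is applied at the second and fourth equalities and (a) at the third.

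Part (c) is the main obstacle, because here $\sigma_n$ need not commute with $x$ and I must extract a usable commutation from the twisted hypothesis $x t_{n+1}\sigma_n=\sigma_n t_{n+1}x$. Combining it with (a) through $x=t_{n+1}x t_{n+1}$ gives the auxiliary identity $\sigma_n x=\sigma_n t_{n+1}x t_{n+1}=x t_{n+1}\sigma_n t_{n+1}$, whence $\sigma_n x\sigma_n=x t_{n+1}(\sigma_n t_{n+1}\sigma_n)=x t_{n+1}I(t_n)$. With this isolated I would compute $I(t_n)x=\sigma_n t_{n+1}(\sigma_n x)=\sigma_n t_{n+1}\cdot x t_{n+1}\sigma_n t_{n+1}=\sigma_n x\sigma_n t_{n+1}$, sliding $t_{n+1}$ past $x$ by (a) and cancelling $t_{n+1}^2=1$, and then substitute $\sigma_n x\sigma_n=x t_{n+1}I(t_n)$ together with the fact that $I(t_n)$ commutes with $t_{n+1}$ by (a): $I(t_n)x=x t_{n+1}I(t_n)t_{n+1}=x t_{n+1}t_{n+1}I(t_n)=xI(t_n)$. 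The only delicate point is bookkeeping the order in which (a), the hypothesis, and the involutivity of $t_{n+1}$ and $I(t_n)$ are invoked; there is no structural difficulty once the auxiliary identity $\sigma_n x=x t_{n+1}\sigma_n t_{n+1}$ is in place, and the whole argument is uniform in the types $\tilde B$ and $\tilde C$ since it uses only relations common to both presentations.
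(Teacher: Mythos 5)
Your proof is correct, and parts (a) and (b) coincide with the paper's argument. For (c) the paper takes a slightly different route: using (a) it rewrites the hypothesis as $x\sigma_n x^{-1}=t_{n+1}\sigma_n t_{n+1}$, identifies the right-hand side with $I(t_n)\sigma_n I(t_n)$ via the braid relation (\ref{braid4}), concludes that the element $I(t_n)x$ commutes with $\sigma_n$, and then finishes by applying (b) to $I(t_n)x$, which lies in $I(W_n)$. Your version replaces this reduction to (b) by a self-contained chain of equalities built on the auxiliary identity $\sigma_n x=xt_{n+1}\sigma_n t_{n+1}$; I checked each step and they all hold. Both arguments use exactly the same ingredients, namely (a) together with (\ref{braid4}), so the difference is purely organizational: the paper's reduction to (b) is a touch shorter, while yours avoids having to notice that $I(t_n)x$ belongs to $I(W_n)$ so that (b) applies to it.
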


  \begin{proof} (a) Indeed $ t_{n+1}$ commutes with generators in $\Sigma$ and the braid relation (\ref{braid4}) expresses commutation with $T(t_n)$. 
(b)  follows immediately. 

Using (a) the assumption in (c) can be written   $x \sigma_n  x^{-1}=t_{n+1} \sigma_n t_{n+1}$, hence by  (\ref{braid4}): 
   $$x \sigma_n  x^{-1}=  \sigma_n t_{n+1} \sigma_n t_{n+1} \sigma_n=  (\sigma_n t_{n+1} \sigma_n) \sigma_n (\sigma_n t_{n+1} \sigma_n )=I( t_n )\sigma_n I( t_n).$$ 
So $I( t_n )x$ commutes with $\sigma_n  $, hence with $I( t_n )$ by (b), which gives   

$I( t_n )I( t_n ) x =     I( t_n )  x  I( t_n )  $, that is $   I( t_n )  x   =x  I( t_n )$. 
  \end{proof}

We now remember \cite[Ch. IV, \S 1.4]{Bourbaki_1981}. Given a Coxeter system $(W,S)$, we attach to any finite sequence 
$\mathbf s = (s_1, \cdots, s_r)$ of elements in $S$,
 the multiset $\Phi(\mathbf s)= \{\{h_1, \cdots, h_r\}\}$ of elements in $W$ defined by: 
$$ 
h_j = (s_1\cdots s_{j-1}) \ s_j \   (s_1\cdots s_{j-1})^{-1}   \qquad \text{ for } 1 \le j \le r .   
$$
We write $\MCard \Phi(\mathbf s) $ for the multi-cardinal of  $\Phi(\mathbf s)$, i.e.the number of elements in the multiset $\Phi(\mathbf s)$, namely $r$ here, and 
$\Card \Phi(\mathbf s)$ for the cardinality of the set underlying $\Phi(\mathbf s)$. We know  from 
 {\it loc.cit.} Lemma 2  that:  

{\it  the product  $s_1 \cdots s_r$ is a reduced expression of the element  $s_1 \cdots s_r$ in $W$ if and only if  $  \  \MCard \Phi(\mathbf s)  =\Card \Phi(\mathbf s)$, that is, all elements in the multiset are distinct}. \\

We now let $w \in W_n$ and fix a reduced decomposition of $w$ as follows : 
$$  w =  u_0 t_{n} u_1 \dots u_{s-1} t_{n}u_s
$$
where $u_0, \cdots, u_s$ are fixed reduced decompositions of elements in $W_\Sigma$. In particular the affine length of $w$ is $s$.  
The image $I(w)$  in $W_{n+1}$ is $$
(\ast) \qquad    I(w)= u_0 \sigma_n t_{n+1} \sigma_n  u_1 \dots u_{s-1} \sigma_n t_{n+1} \sigma_n u_s.$$ 
 We use this fixed expression of $I(w)$   to study what we call, by abuse of language, $\Phi(I(w))$. 
We can describe this multiset as a disjoint union of multisets 
 $$\Phi(I(w))=I(\Phi( w))  \bigsqcup  T_1  \bigsqcup  T_2$$
as follows. 
We develop the expressions of $u_0 , \dots, u_s$ so that $(\ast)$ reads 
$I(w)= s_1 \cdots s_r$, then we have three possibilities for an element 
$ 
h_i = (s_1\cdots s_{i-1}) \ s_i \   (s_1\cdots s_{i-1})^{-1}   
$ of   $\Phi(I(w))$:

 \begin{enumerate}
\item $s_i \in \Sigma$ or $s_i= t_{n+1} $; then 
  $h_i= x X x^{-1}$ where $x$ and $X$ are in  $I(W_n)$. Those elements  make up exactly the multiset 
 $I(\Phi( w))$. 
\item $s_i= \sigma_n$ and $s_{i+1}=t_{n+1}$; then    $h_i= x\sigma_nx^{-1}$ with  $x$   in  $I(W_n)$.  
Those elements make up the multiset   $T_1$.  
\item $s_i= \sigma_n$ and $s_{i-1}=t_{n+1}$; then  $h_i= x \sigma_n  t_{n+1} \sigma_n  t_{n+1}  \sigma_n  x^{-1} =  x  t_{n+1} \sigma_n t_{n+1} x^{-1}$  with  $x$   in  $I(W_n)$. 
Those elements make up the multiset   $T_2$.  \\  
  \end{enumerate}

  We have $\MCard (T_1) =  \MCard (T_2) = L(w)$  and  $\MCard(I(\Phi( w)))= l(w) $,  since $w$ is given in a reduced expression (we will anyway  see it in (1) of the following theorem). 
 Hence the multi-cardinal  of $\Phi(I(w))$ is $l(w)+2L(w)$. If the elements of $\Phi(I(w))$ are all distinct, then it is the cardinal.\\
  
  \begin{theorem}\label{reduced}
Let $w \in W_n$ and take the above conventions. Then   $$
MCard(\Phi(I(w))) = Card(\Phi(I(w))) = l(w)+2L(w).
$$   
In particular,     substituting $ \  {\sigma_n} t_{n+1} {\sigma_n} \  $ to $ \  t_n \  $ in a reduced expression for $w$ produces a reduced expression for the image $I(w)$ of $w$. \\ 
  
  \end{theorem}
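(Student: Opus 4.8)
The plan is to apply Bourbaki's criterion recalled above: since we have already computed $\MCard \Phi(I(w)) = l(w)+2L(w)$, it suffices to prove that the $l(w)+2L(w)$ elements of the multiset $\Phi(I(w))$ are pairwise distinct, for then $\Card \Phi(I(w)) = \MCard \Phi(I(w))$ and the expression $(\ast)$ is reduced, which is exactly the ``in particular'' assertion. I would organize the distinctness proof around the decomposition $\Phi(I(w)) = I(\Phi(w)) \sqcup T_1 \sqcup T_2$ together with two bookkeeping invariants: first, the homomorphism $\epsilon \colon W_{n+1} \to \mathbb Z/2\mathbb Z$ sending $t_{n+1}$ to $1$ and every other generator to $0$ (well defined because each defining relation contains $t_{n+1}$ an even number of times), which satisfies $\epsilon(y\tau y^{-1}) = \epsilon(\tau)$ for any reflection $y\tau y^{-1}$; and second, membership in the subgroup $I(W_n)$. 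Writing $x_k = I(p_k)$, where $p_k$ is the prefix of $w$ preceding its $k$-th letter $t_n$, the two new families read $T_1 = \{x_k\sigma_n x_k^{-1}\}_{1\le k\le s}$ and, using part (a) of the preceding lemma, $T_2 = \{x_k\, t_{n+1}\sigma_n t_{n+1}\, x_k^{-1}\}_{1\le k\le s}$.

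The first, easy, observation is that $I(\Phi(w))$ is internally free of repetitions: $w$ is reduced, so $\Phi(w)$ already has $l(w)$ distinct elements by Bourbaki's lemma, and $I$ is injective (Corollary \ref{Coxeterinj}), so its image keeps them distinct. Next I separate the three families. The elements of $I(\Phi(w))$ coming from the letters $t_n$ of $w$ have $\epsilon = 1$, whereas every element of $T_1$, of $T_2$, and every element of $I(\Phi(w))$ coming from a letter in $\Sigma$ has $\epsilon = 0$; hence $\epsilon$ isolates the former once and for all. To separate the remaining $\epsilon = 0$ reflections I use that $\sigma_n \notin I(W_n)$ and $t_{n+1}\sigma_n t_{n+1}\notin I(W_n)$. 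Both follow from part (a): were either element in $I(W_n)$, then $t_{n+1}$ would commute with it, and a one-line computation forces $\sigma_n t_{n+1} = t_{n+1}\sigma_n$, contradicting that $\sigma_n$ and $t_{n+1}$ generate a dihedral group of order $8$ by the braid relation (\ref{braid4}). Consequently each element of $T_1$ and of $T_2$ lies outside $I(W_n)$ (being an $I(W_n)$-conjugate of $\sigma_n$, resp. of $t_{n+1}\sigma_n t_{n+1}$), while the $\epsilon=0$ members of $I(\Phi(w))$ lie inside $I(W_n)$; so these families cannot collide.

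The crux is the internal distinctness of $T_1$, of $T_2$, and the absence of collisions between $T_1$ and $T_2$, and this is exactly what parts (b) and (c) are designed for. Suppose, for $j \ne k$, that $x_j\sigma_n x_j^{-1} = x_k\sigma_n x_k^{-1}$; then $x := x_k^{-1}x_j \in I(W_n)$ commutes with $\sigma_n$, so by (b) it commutes with $I(t_n)$. A collision $x_j\sigma_n x_j^{-1} = x_k\, t_{n+1}\sigma_n t_{n+1}\, x_k^{-1}$ between $T_1$ and $T_2$ (with $j\ne k$) rearranges, using (a) repeatedly, into the hypothesis $x\, t_{n+1}\sigma_n = \sigma_n t_{n+1}\, x$ of (c), which again yields that $x$ commutes with $I(t_n)$; the $T_2$-internal case is identical. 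In every case, since $I$ is injective, $x = I(p_k^{-1}p_j)$ commuting with $I(t_n)$ forces $p_k^{-1}p_j$ to commute with $t_n$ in $W_n$, i.e. $p_j\, t_n\, p_j^{-1} = p_k\, t_n\, p_k^{-1}$. But these are two of the $t_n$-reflections attached to the reduced word $w$, hence distinct members of $\Phi(w)$, a contradiction. The only degenerate case left is $j=k$ in the $T_1$--$T_2$ comparison, where $x_k\sigma_n x_k^{-1} = x_k\, t_{n+1}\sigma_n t_{n+1}\, x_k^{-1}$ collapses directly to $\sigma_n t_{n+1} = t_{n+1}\sigma_n$, again impossible.

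I expect the main obstacle to be precisely this last step: recognizing that every possible coincidence among the ``new'' reflections (those involving $\sigma_n$ or $t_{n+1}\sigma_n t_{n+1}$) must be funnelled, through the commutation lemma and the injectivity of $I$, back into a repetition among the $t_n$-reflections of $w$, which the reducedness of $w$ forbids. The two separating invariants $\epsilon$ and ``membership in $I(W_n)$'' dispose of all cross-family comparisons cheaply, so that the genuine work is concentrated in the dihedral relation between $\sigma_n$ and $t_{n+1}$ and in the faithful transport of commutations by $I$. Granting this, all $l(w)+2L(w)$ reflections are distinct, whence $\Card \Phi(I(w)) = \MCard \Phi(I(w)) = l(w)+2L(w)$ and $(\ast)$ is reduced.
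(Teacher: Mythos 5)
Your proof is correct and follows essentially the same route as the paper: the same decomposition $\Phi(I(w))=I(\Phi(w))\sqcup T_1\sqcup T_2$, the same six-way case analysis driven by parts (a), (b), (c) of the preceding lemma, and the same funnelling of every collision back to a repetition among the $t_n$-reflections of the reduced word $w$. The extra parity invariant $\epsilon$ is redundant (membership in $I(W_n)$ already separates all of $I(\Phi(w))$ from $T_1\sqcup T_2$), but harmless, and your explicit treatment of the degenerate $j=k$ comparison between $T_1$ and $T_2$ is a small point the paper's case (6) glosses over.
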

  
  \begin{proof}
  
  Take $h_i$ and $h_j$ in $ \Phi(I(w))$ with $i\ne j$ and suppose that $h_i =h_j$. We have six cases.   
  
  (1) $h_i$ and $h_j$ are in $I(\Phi( w))$: contradicts that $w$ is given in its reduced expression  and $I$ is injective.
  
  (2)  $h_i$ and $h_j$ are in $T_1 $, gives (b) in the above lemma, thus contradicts that $w$ is given in its reduced expression.
  
  (3)  $h_i$ and $h_j$ are in $T_2 $,  gives (b) in the above lemma, thus contradicts that $w$ is given in its reduced expression.
  
  (4) $h_i$ is in $I(\Phi( w))$ and  $h_j$ is in $T_1 $, gives $\sigma_n  \in  I(W_n)$, hence commutes with  $t_{n+1}$ by  (a) in the above lemma,  contradiction.
  
  (5) $h_i$ is in $I(\Phi( w))$ and  $h_j$ is in $T_2 $, gives $t_{n+1} \sigma_n t_{n+1}  \in I(W_n)$, hence commutes with  $t_{n+1}$ by  (a) in the above lemma, contradiction.
  
  (6) $h_i$ is in $T_1 $ and  $h_j$ is in $T_2 $,  gives (c) in the above lemma, thus contradicts that $w$ is given in its reduced expression.
  \end{proof}

\begin{corollary}\label{longueurBC}
The monomorphisms  
$$ \begin{aligned}
F_n: W(\tilde C_{n })  &\longrightarrow  W(\tilde C_{n+1}) 
    \\  \text{ and }  \  \   
					 L_n: W(\tilde B_{n })  &\longrightarrow  W(\tilde B_{n+1})
\end{aligned}
 $$ satisfy, for any $v \in W(\tilde{C}_{n} )$ and for any $w \in W(\tilde{B}_{n} )$ : 
$$ \begin{aligned}
l(F_n(v))& = l(v) + 2 L(v) \ \   \text{ and } L(F_n(v)) =L(v) ; \\   l(L_n(w))&= l(w)+2L(w)  \  \text{ and } L(L_n(w)) =L(w) .   
\end{aligned}$$

\end{corollary}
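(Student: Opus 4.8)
The plan is to read off both identities directly from Theorem~\ref{reduced}, so that almost no new work is required. Recall that in the notation of this subsection $I$ stands for either $F_n$ or $L_n$, and that Theorem~\ref{reduced} guarantees that, starting from a fixed reduced expression
$$ w = u_0 t_n u_1 \cdots u_{s-1} t_n u_s $$
of $w \in W_n$ (so that $s = L(w)$), the word $(\ast)$ obtained by substituting $\sigma_n t_{n+1}\sigma_n$ for each occurrence of $t_n$ is a reduced expression for $I(w)$.

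First I would establish the Coxeter-length formula. The word above has length $l(w)$ and contains exactly $s = L(w)$ letters equal to $t_n$. Passing to $(\ast)$ replaces each such single letter by the three-letter word $\sigma_n t_{n+1}\sigma_n$, and thus increases the total number of letters by $2$ for each of the $L(w)$ occurrences; the word $(\ast)$ therefore consists of $l(w) + 2L(w)$ letters. Since Theorem~\ref{reduced} tells us $(\ast)$ is reduced, this letter count is exactly $l(I(w))$, giving $l(I(w)) = l(w) + 2L(w)$.

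Next I would treat the affine length. By the remark preceding Definition~\ref{ALB}, relation~(\ref{braid4}) is the only braid relation involving $t_{n+1}$, and it has the same number of occurrences of $t_{n+1}$ on both sides; by \cite[\S 1.5 Proposition 5]{Bourbaki_1981} the number of occurrences of $t_{n+1}$ in a reduced expression is then independent of the chosen reduced expression, so $L$ is a well-defined function on $W(\tilde B_{n+1})$ (resp.\ $W(\tilde C_{n+1})$). It therefore suffices to count the $t_{n+1}$'s in the single reduced expression $(\ast)$: each former $t_n$ contributes exactly one $t_{n+1}$ and no other letter of $(\ast)$ equals $t_{n+1}$, so there are $s = L(w)$ of them. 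Hence $L(I(w)) = L(w)$.

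Finally I would specialize: taking $I = F_n$ with $v \in W(\tilde C_n)$ yields the first pair of identities, and taking $I = L_n$ with $w \in W(\tilde B_n)$ yields the second. I do not expect any genuine obstacle here, since the corollary is essentially a repackaging of Theorem~\ref{reduced}; the only point demanding care is the appeal to the well-definedness of $L$ on the target group, which is precisely what allows one to compute it on the specific reduced expression $(\ast)$ rather than on an arbitrary one.
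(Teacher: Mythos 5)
Your proposal is correct and matches the paper's intent exactly: the corollary is stated immediately after Theorem~\ref{reduced} with no separate proof, precisely because it follows by the letter-count and $t_{n+1}$-count you describe, together with the well-definedness of $L$ on the target group established before Definition~\ref{ALB}. Nothing further is needed.
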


\bigskip

\section{The towers of  Hecke algebras}\label{HC}      

Let for the moment $K$ be an arbitrary commutative ring with identity; 
 we mean by algebra in what follows $K$-algebra. We recall \cite[Ch. IV \S 2 Ex. 23]{Bourbaki_1981} that for  a given Coxeter graph $\Gamma$ and a corresponding Coxeter system $(W(\Gamma),S)$,   there is a unique algebra structure on the free $K$-module with basis $ \left\{  g_w \ | \ w \in W(\Gamma) \right\} $ satisfying, for $s \in S$ and a given $q \in K$: 
\begin{equation*}\label{definingrelations} 	
	 \begin{aligned}
		 &g_{s} g_{w} =g_{sw}     ~~~~~~~~~~~~~~~~~~~~  \text{ if } s \notin \mathscr{L} (w) , \\
		 &g_{s} g_{w} =qg_{sw}+ (q-1)g_w ~~  \text{ if } s \in \mathscr{L} (w). \\
			  \end{aligned}    \qquad 
		\end{equation*}
We denote this algebra by $H\Gamma(q)$  and call it the the $\Gamma$-type Hecke algebra.   This algebra has a presentation ({\it loc.cit.}) given by generators $ \left\{  g_s \ | \   s \in S \right\} $ and relations 
$$
	 \begin{aligned}
		  g_{s}^2   &=q + (q-1)g_s ~~~~~~~~ \   \text {for } s \in S,  \\
		  (g_{s} g_{t})^r &=(g_{t} g_{s})^r    ~~~~~~~~~~~~~~~~~   \! \text{ for } s, t \in S 
\text{ such that } st \text{ has order }  2r , \\
  (g_{s} g_{t})^r g_s&=(g_{t} g_{s})^r  g_t   ~~~~~~~~~~~~~~  \text{ for } s, t \in S 
\text{ such that } st \text{ has order }  2r+1 .  
			  \end{aligned}   
$$
We   assume in what follows that $q$ is invertible in $K$. In this case  the first defining relation above implies that $ g_{s}$, for $s \in S$, is invertible 
with inverse 
\begin{equation}\label{inverse} 
	g^{-1}_s = 		 \frac{1}{q} \;  	g_s +  \frac{q-1}{q}.
 \end{equation} 

\medskip 

We consider the Hecke algebras  $H\tilde{ C}_{n} (q) $, of  type $\tilde{C_n}$, 
$H\tilde{ B}_{n} (q) $, of  type $\tilde{B_n}$, and $H\tilde{ D}_{n} (q) $, of  type $\tilde{D_n}$, 
which correspond respectively to the Coxeter groups $W(\tilde{C}_{n} )$, $W(\tilde{B}_{n} )$ and $W(\tilde{D}_{n} )$.
		The morphisms $F_n$, $L_n$ and $G_n$ of \S \ref{group tower} have a counterpart in the setting of Hecke algebras, as follows from Remark \ref{braid} or as can be   checked directly,  namely the following morphisms of algebras (where we write carefully $e_w$ for the basis elements of the algebras in rank $n$, to be reminded of the possible lack of injectivity):  
	
		\begin{equation}\label{defRn}
\begin{aligned}
					  R_n: H\tilde{C}_n (q)  &\longrightarrow   H\tilde{C}_{n+1} (q) \\
					e_{\sigma_i} &\longmapsto  g_{\sigma_i}  ~~~ ~~~ ~~~ \text{for }  0\leq i\leq n-1  \\
					e_{t_n} &\longmapsto  g_{\sigma_n} g_{t_{n+1}}g_{\sigma_n}^{-1}  .
\end{aligned}		
\end{equation}

		\begin{equation}\label{defQn}
\begin{aligned}
					  Q_n: H\tilde{B}_n (q)  &\longrightarrow   H\tilde{B}_{n+1} (q) \\
					e_{\sigma } &\longmapsto  g_{\sigma }  ~~~ ~~~ ~~~ \text{for }  
\sigma \in \{ \sigma_{1},  \sigma_{\bar 1}, \dots, \sigma_{n}  \}  \\
					e_{t_n} &\longmapsto  g_{\sigma_n} g_{t_{n+1}}g_{\sigma_n}^{-1}  .
\end{aligned}		
\end{equation}

		\begin{equation}\label{defPn}
\begin{aligned}
					  P_n: H\tilde{D}_n (q)  &\longrightarrow   H\tilde{D}_{n+1} (q) \\
	e_{\sigma } &\longmapsto  g_{\sigma }  ~~~ ~~~ ~~~ \text{for }  
\sigma \in \{ \sigma_{1},  \sigma_{\bar 1}, \dots, \sigma_{n-1}  \}   \\
					e_{\sigma_{\bar {n-1}}} &\longmapsto  g_{\sigma_n}g_{\sigma_{n-1}} g_{\sigma_{\bar n}}g_{\sigma_{n-1}}^{-1} g_{\sigma_n}^{-1}  .
\end{aligned}		
\end{equation}	 

\medskip 

When  $K $  is the ring   $\mathbf Q[q,q^{-1}]$ of Laurent polynomials in $q$ with rational coefficients, we proved in 
\cite[Proposition 3.3]{Sadek_2017} that the morphism $R_n$ is injective,  hence producing a faithful tower of 
Heche algebras of type $\tilde C$. Actually for types $\tilde C$ and $\tilde B$ we prove below injectivity for any ground ring $K$. For type $\tilde D$ we keep to the 
special case $K = \mathbf Q[q,q^{-1}]$ but we will skip the details of the proof, similar to the proof in {\it loc.cit.}.
In any case,  injectivity at the level of Hecke algebras is not needed to prove injectivity for towers of Temperley-Lieb algebras in \S \ref{TL}.

\subsection{Faithfulness of towers of $\tilde B$-type and $\tilde C$-type  Hecke algebras}

For types $\tilde B$ and $\tilde C$ the key is the following Proposition:

\begin{proposition}\label{coroBC}
	Let $w$ be any element in $W(\tilde{C}_{n } )$. Then:
				\begin{eqnarray}
					R_n(e_{w}) =A_w g_{F_n(w)}+ \sum\limits_{\begin{smallmatrix} x\in W(\tilde{C}_{n+1}),    \cr  l(x)<l(F_n(w)) \cr L(x)\le L(w) \end{smallmatrix}} \lambda_{x}g_{x}, \nonumber
				\end{eqnarray}
				where $A_w $ belongs to $ q^\mathds Z$ and  the $ \lambda_{x}$ belong to $K$. \\ 

The same holds for $w \in W(\tilde{B}_{n } )$, replacing $R_n$ by $Q_n$ and $F_n$ by $L_n$. \\ 
\end{proposition}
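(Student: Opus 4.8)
The plan is to prove the statement by induction on the Coxeter length $l(w)$, exploiting the explicit definition of $R_n$ (respectively $Q_n$) on generators together with the defining relations of the Hecke algebra. The base case $w = 1$ is immediate since $R_n(e_1) = g_1$ with $A_1 = 1$ and no correction terms. For the inductive step I would pick $s \in \mathscr{L}(w)$ with $s \in \Sigma$ (a generator other than $t_n$) whenever possible, and write $w = sw'$ with $l(w') = l(w) - 1$ and $L(w') = L(w)$; since $R_n$ is an algebra morphism sending $e_s$ to $g_s$, I would compute $R_n(e_w) = g_s R_n(e_{w'})$ and apply the inductive hypothesis to $e_{w'}$.

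The crux is then to understand how left-multiplication by $g_s$ acts on the right-hand side. Expanding $g_s \bigl(A_{w'} g_{F_n(w')} + \sum \lambda_x g_x\bigr)$ using the two defining relations $g_s g_y = g_{sy}$ (if $s \notin \mathscr{L}(y)$) and $g_s g_y = q\,g_{sy} + (q-1) g_y$ (if $s \in \mathscr{L}(y)$), the leading term $A_{w'} g_{F_n(w')}$ produces $A_{w'} g_{s F_n(w')}$ possibly with a lower-order tail. Here I would invoke Theorem \ref{reduced} / Corollary \ref{longueurBC}: since $F_n$ preserves reduced expressions and $s \in \Sigma$ is fixed by $F_n$, we have $s F_n(w') = F_n(s w') = F_n(w)$ with $l(F_n(w)) = l(F_n(w')) + 1$, so $s \notin \mathscr{L}(F_n(w'))$ and the leading term is exactly $A_{w'} g_{F_n(w)}$, giving $A_w = A_{w'} \in q^{\mathds Z}$. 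Every other term $g_s g_x$ produces basis elements of Coxeter length at most $l(x) + 1 < l(F_n(w))$ and affine length unchanged (as $s \in \Sigma$ contributes no $t_{n+1}$), so the $L(x) \le L(w)$ and $l(x) < l(F_n(w))$ constraints are preserved.

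The genuine obstacle is the case where $\mathscr{L}(w) \subseteq \{t_n\}$, forcing $s = t_n$; here $R_n(e_{t_n}) = g_{\sigma_n} g_{t_{n+1}} g_{\sigma_n}^{-1}$ is not a single basis element, so the clean morphism-of-generators argument must be replaced by a direct length-tracking computation. I would write $w = t_n w'$ with $L(w') = L(w) - 1$ and expand $R_n(e_w) = g_{\sigma_n} g_{t_{n+1}} g_{\sigma_n}^{-1} R_n(e_{w'})$; using the inverse formula (\ref{inverse}) to replace $g_{\sigma_n}^{-1}$ by $\tfrac{1}{q} g_{\sigma_n} + \tfrac{q-1}{q}$ introduces a factor landing in $q^{\mathds Z}$, which is precisely why $A_w$ is allowed to sit in $q^{\mathds Z}$ rather than being $1$. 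The leading monomial $g_{\sigma_n} g_{t_{n+1}} g_{\sigma_n} g_{F_n(w')}$ must be shown to equal (up to the scalar $\tfrac{1}{q}$ and lower terms) $g_{F_n(w)}$, which again follows from Theorem \ref{reduced}: substituting $\sigma_n t_{n+1} \sigma_n$ for $t_n$ in a reduced expression of $w$ yields a reduced expression for $F_n(w)$, so the three multiplications each add exactly one to the Coxeter length and raise the affine length by one, matching $L(F_n(w)) = L(w)$.

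The argument for $Q_n$ and $L_n$ in type $\tilde B$ is verbatim the same, since the relevant structural inputs — that $L_n$ fixes $\Sigma$, sends $t_n$ to $\sigma_n t_{n+1} \sigma_n$, and preserves reduced expressions by Corollary \ref{longueurBC} — are identical to those for $F_n$ in type $\tilde C$; I would simply remark that replacing $(F_n, \tilde C)$ by $(L_n, \tilde B)$ throughout leaves every step intact. I expect the main delicacy to be bookkeeping in the $s = t_n$ step: ensuring that all terms arising from the inverse expansion and from the defining relations genuinely have $l(x) < l(F_n(w))$ and $L(x) \le L(w)$, which requires carefully invoking that $g_{t_{n+1}}$ is the only source of affine length and that every correction term produced by $g_s g_y = q g_{sy} + (q-1) g_y$ either lowers the Coxeter length or keeps $t_{n+1}$-count bounded by $L(w)$.
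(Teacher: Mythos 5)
Your proof is correct and rests on exactly the same ingredients as the paper's (which is only a two-line sketch): the expansion of $g_{\sigma_n}^{-1}$ via (\ref{inverse}) and the fact from Corollary \ref{longueurBC} that substituting $\sigma_n t_{n+1}\sigma_n$ for $t_n$ in a reduced expression yields a reduced expression for the image, so that the full product contributes the single leading term $q^{-L(w)}g_{F_n(w)}$ while every other term loses Coxeter length without gaining affine length. Organizing the "development" as an induction on $l(w)$, with the separate treatment of the case $\mathscr{L}(w)\subseteq\{t_n\}$, is just a careful bookkeeping of the direct expansion the paper alludes to, and all the length estimates you invoke do go through.
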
 

\begin{proof} This is a direct consequence of 
  Corollary \ref{longueurBC}. To obtain it we use 
     definitions (\ref{defRn}) of $R_n$ and  (\ref{defQn}) of $Q_n$ and develop 
the images $R_n(e_{w})$ and $Q_n(e_{w}) $ with  the expression (\ref{inverse}) for the inverse of $g_{\sigma_n}$.  
\end{proof}

\begin{theorem}\label{HeckeBC} Let $K$ be a ring and $q$ be invertible in $K$. 
The towers of affine Hecke  algebras: 
\begin{eqnarray}
			  H\tilde{C}_{1}(q)  \stackrel{R_{1}}{\longrightarrow}  H\tilde{C}_{2}(q) \stackrel{R_{2}} {\longrightarrow}  \cdots  H\tilde{C}_n (q)\stackrel{R_{n}} {\longrightarrow}  H\tilde{C}_{n+1}(q)\longrightarrow  \cdots  \nonumber\\\nonumber \\\nonumber 
			  H\tilde{B}_{2}(q)  \stackrel{Q_{2}}{\longrightarrow}  H\tilde{B}_{3}(q) \stackrel{Q_{3}} {\longrightarrow}  \cdots  H\tilde{B}_n (q)\stackrel{Q_{n}} {\longrightarrow}  H\tilde{B}_{n+1}(q)\longrightarrow  \cdots  \nonumber 
		\end{eqnarray}	
	are towers of faithful arrows. 	\\  

\end{theorem}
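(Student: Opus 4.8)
The plan is to prove injectivity of each $R_n$ and $Q_n$ by a triangularity argument, leveraging Proposition~\ref{coroBC}. The essential observation is that Proposition~\ref{coroBC} exhibits, for each basis element $e_w$ of the rank-$n$ algebra, an image whose expansion in the basis $\{g_x\}$ of the rank-$(n+1)$ algebra has a distinguished leading term $A_w g_{F_n(w)}$ (resp. $A_w g_{L_n(w)}$) with invertible coefficient $A_w \in q^{\mathbb Z}$, while every other term $g_x$ has strictly smaller Coxeter length $l(x) < l(F_n(w))$. Since $F_n$ (resp. $L_n$) is injective on the Coxeter group by Corollary~\ref{Coxeterinj}, the leading indices $F_n(w)$ are pairwise distinct as $w$ ranges over $W(\tilde C_n)$, so no cancellation among leading terms can occur between distinct basis elements.

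First I would make this triangularity precise. Suppose $\xi = \sum_{w} c_w e_w$ is a finite $K$-linear combination lying in the kernel of $R_n$, with not all $c_w$ zero. Among the $w$ with $c_w \neq 0$, choose one, say $w_0$, for which $l(F_n(w_0))$ is maximal; if several achieve this maximum, the corresponding indices $F_n(w)$ are still distinct group elements by Corollary~\ref{Coxeterinj}, so pick any one of them. Expanding $R_n(\xi) = \sum_w c_w R_n(e_w)$ via Proposition~\ref{coroBC} and collecting the coefficient of $g_{F_n(w_0)}$: the only contributions to this coefficient come from the leading term $A_{w_0} g_{F_n(w_0)}$ of $R_n(e_{w_0})$ — since any lower-order term $g_x$ appearing in some $R_n(e_w)$ has $l(x) < l(F_n(w)) \le l(F_n(w_0))$ and cannot equal $F_n(w_0)$, while the leading term of any other $R_n(e_w)$ is indexed by $F_n(w) \neq F_n(w_0)$. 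Hence the coefficient of $g_{F_n(w_0)}$ in $R_n(\xi)$ is exactly $c_{w_0} A_{w_0}$. As $R_n(\xi) = 0$ and $A_{w_0}$ is invertible (it lies in $q^{\mathbb Z}$ and $q$ is invertible in $K$), we get $c_{w_0} = 0$, a contradiction. Therefore $\ker R_n = 0$. The identical argument, with $Q_n$ in place of $R_n$ and $L_n$ in place of $F_n$, proves injectivity of $Q_n$. Since each arrow in both towers is injective, the towers consist of faithful arrows.

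The main obstacle — which is already dispatched by the earlier results — is establishing that the leading term genuinely dominates \emph{in Coxeter length} and that its index is controlled by an \emph{injective} map; both are exactly the content of Proposition~\ref{coroBC} and Corollary~\ref{Coxeterinj}, the former resting in turn on Corollary~\ref{longueurBC}. A subtlety worth flagging explicitly is that the triangularity is strict only in $l(x)$, not in the pair $(L(x), l(x))$; the affine-length bound $L(x) \le L(w)$ in Proposition~\ref{coroBC} is not needed for injectivity here, since the strict drop in Coxeter length together with injectivity of $F_n$ on $W(\tilde C_n)$ already suffices to isolate each leading coefficient. One should note that this argument is uniform in the ground ring $K$: it uses only that $q$ is invertible, so that $A_w \in q^{\mathbb Z}$ is a unit — this is precisely why the conclusion holds over any $K$ for types $\tilde B$ and $\tilde C$, in contrast to type $\tilde D$ where Corollary~\ref{longueurBC} is unavailable and one must fall back to the specialization argument over $\mathbf Q[q,q^{-1}]$.
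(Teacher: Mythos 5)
Your proof is correct and follows essentially the same route as the paper: a leading-term triangularity argument resting on Proposition~\ref{coroBC} and the injectivity of $F_n$ and $L_n$ (Corollary~\ref{Coxeterinj}), with the unit coefficient $A_w\in q^{\mathbb Z}$ doing the work over an arbitrary ground ring. The only difference is cosmetic: the paper first isolates the terms of maximal affine length $m$ and then, among those, the ones of maximal Coxeter length, whereas you correctly observe that a single maximization over $l(F_n(w))$ already isolates the invertible leading coefficients, so the bound $L(x)\le L(w)$ in Proposition~\ref{coroBC} is indeed not needed for this particular conclusion.
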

 
\begin{proof}
We write the proof in case $\tilde C$, case $\tilde B$ is identical. Assuming a non trivial dependence relation  $$(\ast) \qquad \sum_w \lambda_w R_n(e_w) =0,$$   we let  
 $ m= \max \{L(w)   \  | \ 
w \in W^c(\tilde C_{n }) \text{ and } \lambda_w \ne 0 \}.$

In the development of $(\ast)$ on the basis  $(g_x)_{x \in W(\tilde{C}_{n+1 } }$, the terms $g_x$ with non-zero coefficient have $L(x)\le m$ and, among those, the ones with $L(x)=m$ come exclusively from
$\sum_{L(w)=m} \lambda_w R_n(e_w)  $. So among those again, using Proposition \ref{coroBC}, the ones with maximal Coxeter length are the $\lambda_w A_w g_{F_n(w)}$ where $L(w)=m$ and $l(w)=a$, 
with $a= \max \{l(w)   \  | \ L(w)=m   \text{ and } \lambda_w \ne 0 \}$. But they are linearly independent, a contradiction.
\end{proof}

\subsection{Faithfulness of towers of $\tilde D$-type Hecke algebras}

For type $\tilde D$, as in type $\tilde A$, where the proof of injectivity was given in 
\cite[Proposition 4.3.3]{Sadek_Thesis},  
 the preceeding proof cannot be used because  the monomorphism $G_n$ defined in  Corollary \ref{Coxeterinj} 
does not transform the  length in the linear way   $F_n$ and $L_n$ do. The proof  of
 \cite[Proposition 4.3.3]{Sadek_Thesis}
or \cite[Proposition 3.3]{Sadek_2017} have to be imitated to get the following Proposition. 
Since  we expect    injectivity to be holding more generally, we don't include the proof.

\begin{proposition}\label{pr_3_3_3}	 
	Let $K= \mathbf Q[q,q^{-1}]$. 			The homomorphism   of algebras  $$P_n:  H\tilde{D}_n (q)   \longrightarrow  H\tilde{D}_{n+1} (q)$$ defined in (\ref{defPn}) is an injection. 		\\  
			\end{proposition}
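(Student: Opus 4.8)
The plan is to imitate the type $\tilde A$ argument of \cite[Proposition 4.3.3]{Sadek_Thesis}, using that over $K=\mathbf Q[q,q^{-1}]$ one may specialize $q$ to $1$. Setting $q=1$ collapses the defining relation $g_s^2=q+(q-1)g_s$ to $g_s^2=1$, so the assignment $g_x\mapsto x$ extends to a surjective homomorphism $\pi_m\colon H\tilde D_m(q)\to\mathbf Q[W(\tilde D_m)]$ onto the group algebra (a $K$-algebra homomorphism, the target being a $K$-algebra via $q\mapsto 1$). The first step is to record the commuting square: composing $P_n$ with $\pi_{n+1}$ agrees with composing $\pi_n$ with the group-algebra homomorphism $\bar G_n$ induced by the monomorphism $G_n$. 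Indeed both are $K$-algebra homomorphisms $H\tilde D_n(q)\to\mathbf Q[W(\tilde D_{n+1})]$, and they agree on generators, since $\pi_{n+1}(P_n(e_{\sigma_{\bar{n-1}}}))=\sigma_n\sigma_{n-1}\sigma_{\bar n}\sigma_{n-1}^{-1}\sigma_n^{-1}=G_n(\sigma_{\bar{n-1}})$ and $\pi_{n+1}(P_n(e_{\sigma_i}))=\sigma_i=G_n(\sigma_i)$.

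Next I would observe that $\bar G_n$ is injective: by Corollary \ref{Coxeterinj} the group homomorphism $G_n$ is injective, hence it carries distinct basis elements $w$ of $\mathbf Q[W(\tilde D_n)]$ to distinct basis elements $G_n(w)$, so $\bar G_n$ sends a basis injectively into a basis. The core of the proof is then a descent on $(q-1)$. Suppose $\xi=\sum_w\lambda_w e_w\in\ker P_n$ with $\lambda_w\in K$. Applying $\pi_{n+1}$ and the commuting square gives $\sum_w\lambda_w(1)\,G_n(w)=0$ in the group algebra, whence $\lambda_w(1)=0$ for every $w$ by injectivity of $\bar G_n$; thus $(q-1)\mid\lambda_w$ in $K$. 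Writing $\lambda_w=(q-1)\mu_w$, the relation $(q-1)\sum_w\mu_w P_n(e_w)=0$ together with torsion-freeness of the free $K$-module $H\tilde D_{n+1}(q)$ forces $\sum_w\mu_w e_w\in\ker P_n$ as well. Iterating, each $\lambda_w$ is divisible by $(q-1)^k$ for all $k\ge 1$; since $q-1$ is a non-unit prime in the UFD $K$, this is possible only when $\lambda_w=0$. Hence $\ker P_n=0$.

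The reason the ring-independent argument of Theorem \ref{HeckeBC} is unavailable here is precisely that $G_n$ has no analogue of Corollary \ref{longueurBC}: substituting the conjugate $g_{\sigma_n}g_{\sigma_{n-1}}g_{\sigma_{\bar n}}g_{\sigma_{n-1}}^{-1}g_{\sigma_n}^{-1}$ into a reduced word for $w$ need not produce a reduced word for $G_n(w)$, so $P_n(e_w)$ carries no clean leading term $g_{G_n(w)}$ and the linear-independence-of-top-terms argument collapses. The specialization route avoids this entirely, at the cost of passing through $q=1$, where injectivity is transparent from the Coxeter level, and then lifting back. I expect the genuinely load-bearing step to be this lift, i.e.\ the $(q-1)$-descent: the only subtlety is to check that a kernel element forces vanishing of all coefficients at $q=1$ and that torsion-freeness permits repeating this indefinitely, which no nonzero Laurent polynomial can survive. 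This is why the statement is confined to $K=\mathbf Q[q,q^{-1}]$, and the remaining verifications (that $\pi_m$ is a well-defined algebra map and that the square commutes on all generators) are routine.
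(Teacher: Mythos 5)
Your proof is correct and follows exactly the route the paper prescribes: the paper omits the argument for Proposition \ref{pr_3_3_3}, saying only that the specialization-at-$q=1$ proofs of \cite[Proposition 4.3.3]{Sadek_Thesis} and \cite[Proposition 3.3]{Sadek_2017} are to be imitated, and your commuting square with the group algebra, the injectivity of the induced map on group algebras via Corollary \ref{Coxeterinj}, and the $(q-1)$-descent using torsion-freeness of the free $K$-module constitute precisely that imitation. All steps check out, including the final observation that no nonzero element of $\mathbf Q[q,q^{-1}]$ is divisible by every power of $q-1$.
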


\section{Full commutativity and  normal forms}\label{notations}

  \subsection{Fully commutative elements}

				In a given Coxeter group $(W,S)$, we know that, from a given reduced expression of an element $w$, we can arrive to any other reduced expression of $w$ only by applying braid relations \cite[\S 1.5 Proposition 5]{Bourbaki_1981}.  Among these relations there are commutation relations: those 
				that  correspond  to   generators $t$ and $s$ in $S$  such that $st$ has order $2$.  \\

	             \begin{definition} Let $\Gamma$ be a Coxeter diagram. 
			Elements of $W(\Gamma$)  for which one can pass from any reduced expression to any other one only by applying commutation relations are called {\rm fully commutative elements}. We denote  by $W^{c}(\Gamma)$, or simply $W^{c}$,  the set of fully commutative elements in $W= W(\Gamma)$. \\
		    \end{definition}
 We now consider  $W(D_{n+1} ) $ generated by $\{ \sigma_{1},  \sigma_{\bar 1}, \dots, \sigma_{n}  \}$. 
The set  $W^{c}(D_{n+1})$     is described 
  by Stembridge in \cite{St}. We use the notation there and the same assumption that the subword  $ \sigma_{\bar 1} \sigma_1$ does not appear (we see it as $\sigma_1 \sigma_{\bar 1}$  for the sake of unicity). For integers $j  \ge i \ge 2$ and $k\ge 1$ let:  		
$$ \begin{aligned}  \      \langle i,j ]    
&= \sigma_i \sigma_{i+1} \dots \sigma_j   \  ;          \    \langle -i, j ]   =  \sigma_i \sigma_{i-1} \dots  \sigma_2  \sigma_1\sigma_{\bar 1}   \sigma_2\dots \sigma_{j-1}  \sigma_j, 
\\
 \langle 1,k]    
&= \sigma_1 \sigma_{2} \dots \sigma_k  \   ; \    \langle -1, k]     = \sigma_{\bar 1}  \sigma_{2} \dots \sigma_k\      \    ;  \    \langle 0, k]   =\sigma_1  \sigma_{\bar 1}  \sigma_{2} \dots \sigma_k  ; 
\end{aligned}  
$$ 
so that $ \langle -1,1]  =  \sigma_{\bar 1}$ and  $ \langle 0,1]  =\sigma_1   \sigma_{\bar 1}$. We also let, for convenience and later use: $  \langle n+1,n ]=1$. 
Then 
every element of $W(D_{n+1})$ 
has a unique reduced expression of the form 
$ \langle m_1,n_1]  \langle m_2,n_2] \dots \langle m_r,n_r]   $ with 
$n\ge n_1 > n_2 > \dots n_r \ge 1$ and $|m_i| \le n_i$ for $1\le i \le r$ 
  ({\it loc.cit.} p.1310).   Now:

\begin{theorem}\label{1_2}{\rm \cite[Theorem 10.1]{St}}
    $W^c(D_{n+1})$ is the set of elements with a reduced expression of the   form 
 \begin{equation}\label{Stembridge}
 \langle m_1,n_1]  \langle m_2,n_2] \dots \langle m_r,n_r]   
  \end{equation}
  with 
$n\ge n_1 > n_2 > \dots n_r \ge 1$ and $|m_i| \le n_i$ for $1\le i \le r$, where the occurrences of 
$ 1 $ and $-1 $ alternate and either  
\begin{enumerate}
\item $ m_1 > \dots >m_s  > |m_{s+1}| = \dots =|m_r| = 1  $ for some  $s \le r$, or 
\item $ m_1 > \dots > m_{r-1}  > -m_r \ge 0 $, $m_{r-1} > 1$, and $m_r \ne -1$. \\
\end{enumerate}
\end{theorem}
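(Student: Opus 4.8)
The plan is to take the normal form $\langle m_1,n_1]\langle m_2,n_2]\cdots\langle m_r,n_r]$ as given — it already classifies \emph{all} of $W(D_{n+1})$ by the cited result (\emph{loc.cit.} p.1310) — and to decide exactly which of these words are fully commutative. Since $D_{n+1}$ is simply laced, every finite braid relation has length three, so by Matsumoto--Tits the element $w$ is fully commutative if and only if no reduced word for $w$ contains a contiguous factor $sts$ with $\{s,t\}$ an edge of the diagram; equivalently I would phrase the test on the heap of $w$, which fails full commutativity precisely when it carries a convex chain labelled $s,t,s$ for some edge. The relevant edges are the chain edges $\{\sigma_k,\sigma_{k+1}\}$ and the single fork edge $\{\sigma_{\bar 1},\sigma_2\}$, and the only delicate feature of the diagram is the bottom fork: $\sigma_1$ and $\sigma_{\bar 1}$ commute with each other but neither commutes with $\sigma_2$.

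First I would check that each individual block is fully commutative. The increasing runs $\langle i,j]$, $\langle 1,k]$, $\langle -1,k]$ and $\langle 0,k]$ use every generator at most once. In $\langle -i,j]=\sigma_i\cdots\sigma_2\sigma_1\sigma_{\bar 1}\sigma_2\cdots\sigma_j$ a generator $\sigma_k$ may repeat, but its two occurrences are always separated by two letters that do not commute with it (two copies of $\sigma_{k-1}$ when $k\ge 3$, and the pair $\sigma_1,\sigma_{\bar 1}$ when $k=2$), so no factor $\sigma_k\,t\,\sigma_k$ can be produced by commutations. Consequently every potential braid must either straddle the junction between two consecutive blocks (for the chain edges) or come from the global pattern of $\sigma_1$'s and $\sigma_{\bar 1}$'s in the whole word (for the fork), and the heart of the argument is the analysis of consecutive pairs $\langle m_i,n_i]\langle m_{i+1},n_{i+1}]$ with $n_i>n_{i+1}$, combined with this global fork tracking.

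Stacking the blocks into a heap, the strict inequalities $n_1>n_2>\cdots>n_r$ confine all interaction between two consecutive blocks to a bounded band of levels around $n_{i+1}$, and I would extract there the precise rule: a forbidden chain $\sigma_k\sigma_{k+1}\sigma_k$ appears iff the descent of $\langle m_{i+1},n_{i+1}]$ reaches below the level at which $\langle m_i,n_i]$ bottoms out, which together with $|m_i|\le n_i$ translates into the strict monotonicity required of the $m_i$. For the fork, a braid on $\{\sigma_1,\sigma_2\}$ or $\{\sigma_{\bar 1},\sigma_2\}$ forms exactly when two successive occurrences of $\sigma_1$, or of $\sigma_{\bar 1}$, are separated by a single $\sigma_2$; tracking which blocks emit a $\sigma_1$ (those with $m_i\in\{1,0\}$ or $m_i\le -2$) and which emit a $\sigma_{\bar 1}$ (those with $m_i\in\{-1,0\}$ or $m_i\le -2$), and noting that a block with $m_i=0$ or $m_i\le -2$ touches both prongs of the fork in the balanced order $\sigma_1\sigma_{\bar 1}$, shows that full commutativity forces the one-sided touches $m_i=1$ and $m_i=-1$ to alternate. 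The two surviving tail shapes — a strictly decreasing head followed by an alternating run of $\pm 1$'s (case (1)), versus a single final descent $\langle -i,j]$ or $\langle 0,k]$ through the fork with $m_{r-1}>-m_r\ge 0$, $m_{r-1}>1$ and $m_r\ne -1$ (case (2)) — are precisely the two ways the word can pass through the fork without repeating a one-sided touch.

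To finish I would run both directions off this local analysis: if any listed inequality fails, the offending junction or fork configuration exhibits an explicit $sts$ factor in some reduced word, so $w\notin W^c$; conversely, if all the conditions hold, the heap has no convex $s,t,s$ chain at any edge, so $w$ is fully commutative. I expect the genuine obstacle to be the bottom-fork bookkeeping: because the normal form breaks the $\sigma_1\leftrightarrow\sigma_{\bar 1}$ symmetry (always recording $\sigma_1\sigma_{\bar 1}$, never the reverse) and because the parity of the alternation interacts with the sign of $m_r$, the clean symmetric picture at levels $\ge 2$ degenerates into the asymmetric split between cases (1) and (2); pinning down the boundary constraints $m_{r-1}>1$, $-m_r\ge 0$ and $m_r\ne -1$ exactly is where the care is needed.
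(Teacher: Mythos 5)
The paper does not actually prove this statement: it is quoted directly from Stembridge \cite[Theorem~10.1]{St}, so there is no internal proof to compare against. Your overall strategy --- take the normal form $\langle m_1,n_1]\cdots\langle m_r,n_r]$, already known to enumerate all of $W(D_{n+1})$, invoke the simply-laced criterion that $w$ is fully commutative iff no word in its commutation class contains a factor $sts$ for an edge $\{s,t\}$ (equivalently, the heap has no convex chain $s,t,s$), and then classify which junction and fork configurations create such a factor --- is the standard one and is in the spirit of Stembridge's original argument. So the route is sound.

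The difficulty is that your write-up stops precisely where the content of the theorem lives. The junction analysis is compressed into one sentence whose inequality appears to point the wrong way: a braid $\sigma_k\sigma_{k+1}\sigma_k$ at the seam between blocks $i$ and $i+1$ arises when the later block \emph{fails} to start strictly below where the earlier one bottoms out (e.g.\ $\sigma_2\sigma_3$ followed by $\sigma_2\cdots$ gives $\sigma_2\sigma_3\sigma_2$), so full commutativity forces $m_{i+1}<m_i$, not the condition as you phrased it; this needs to be stated and checked carefully, including the case of blocks with $m_i\le 0$ that descend through the fork and come back up. More seriously, the fork bookkeeping --- which is exactly what separates case~(1) from case~(2) and produces the boundary constraints $m_{r-1}>1$, $-m_r\ge 0$, $m_r\ne -1$, together with the fact that a block touching both prongs can only occur as the \emph{last} block and only when no earlier block touches the fork at all --- is explicitly deferred in your last sentence. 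Since you acknowledge you have not pinned down these constraints, what you have is a correct plan with the decisive combinatorial verification still to be carried out; as it stands it would not substitute for the citation.
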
 
 
	We provide as an example in Appendix \ref{exempleD4} the list of elements of $W^c(D_{4})$. 
	
	 We remark that if $r>1$, then $\langle m_2, n_2 ] \dots \langle m_s, n_s ]$ in (\ref{Stembridge})  above belongs to $W^c( D_ {n})$. From now on we will mean by {\it "the form (\ref{Stembridge})"} the form of a fully commutative element unless we mention otherwise. We notice that  if $ \sigma_{n} $ appears in form (\ref{Stembridge})    above, then it appears necessarily in the first term $\langle m_1,n_1] $ and  either it appears   only once  and we have $n = n_1 \ne -m_1$, or it appears   exactly twice and we have $n = n_1 =-m_1$ and   $r=1$.  Now $ \sigma_{n-1} $ can only appear in the two first terms   $\langle m_1,n_1] \langle m_2,n_2] $ and this happens if and only if $n_1 \ge n-1$.   If   $ \sigma_{n-1} $ appears more than once, then in   form (\ref{Stembridge}) either $n_2 = n-1$ and $n_1 = n $, in which case the constraints of  Theorem \ref{1_2} 
show that $ \sigma_{n-1} $   appears at most twice (for if $m_2=-(n-1)$ then $m_1=n$); or $n_2 < n-1\le n_1$ and $m_1\le -(n-1)$ 
so $r=1$.   
      			
 	\begin{definition}\label{Bex}
				An element $u$ in $W^{c}(D_{n+1})$ is called 
				$\tilde{B}$-{\rm extremal}  if   $ \sigma_{n} $  appears in a (any) reduced expression for $u$. In this case $u$ can be written in  form  (\ref{Stembridge}) with $n_1=n$ and either 
$u= 	 \langle -n, n] $, or $m_1 > -n $ and  $ \sigma_{n} $ appears   only once. 
			\end{definition}

			
			 \begin{definition}\label{Dex}
				An element $u$ in $W^{c}(D_{n+1})$ is called $\tilde{D}$-{\rm extremal}  if   $ \sigma_{n-1} $  appears twice in a (any) reduced expression for $u$. In this case $u$ can be written in  exactly one of the four following   forms: 	
\begin{itemize}   
\item   $\langle-(n-1), n-1]  $ ;
 \item  $ 			 \langle-(n-1), n]   $ ;       
\item $ 			 \langle n, n]  \langle -(n-1), n-1]  $   ;
\item  $		 \langle m_1,n]  \langle m_2,n-1] \dots \langle m_r,n_r] \   $  
 with $r \ge 2$ and  $  m_1 \le n-1$ in addition to the conditions   of Theorem \ref{1_2}.
\end{itemize}
			\end{definition}

		We defined previously the affine length of an element of 	$W (\tilde B_{n+1})$ (Definition \ref{ALB}), which we couldn't do on  
  the full  group $W(\tilde{D}_{n+1}) $ on account of  the   braid relation 
$$\sigma_{n-1}\sigma_{\bar n}\sigma_{n-1} = \sigma_{\bar n} \sigma_{n-1}\sigma_{\bar n}. $$  Since the  	
 words  here  cannot be   subwords of any reduced expression of an element of $W^{c}(\tilde{D}_{n+1})$, it follows (recall \cite[\S 1.5 Proposition 5]{Bourbaki_1981}) that  the number of times $\sigma_{\bar n}$   occurs in a 
 reduced expression of an element of $W^{c}(\tilde{D}_{n+1})$   does not depend of this reduced expression.

			\begin{definition}\label{ALD} Let $u \in W^{c}(\tilde{D}_{n+1})$. 
				We define the {\rm affine length}  of $u$  to be the number of times  $\sigma_{\bar n}$ occurs   in a (any) 
 reduced expression of $u$. We denote  it by $L(u)$. \\
			        \end{definition}	

   \subsection{Reduced expressions of elements of $W^{c}(\tilde{B}_{n+1})$}

\begin{lemma}\label{lemmafull}
Let $w$ be  a fully commutative element in $W(\tilde B_{n+1})$ with $L(w) =m \ge 2$ for $ n\ge 3$ . 
Fix  a reduced expression of $w$ as follows: 
$$
w =  u_1 t_{n+1} u_2 t_{n+1} \dots u_m t_{n+1} u_{m+1}  $$
with $u_i$, for $1\le i \le m+1$, a reduced expression of an element in 
$W^c(D_{n+1})$.  
Then $u_2, \dots, u_m$ are $\tilde B$-extremal  elements and there is a reduced expression of $w$ of the  form:  
 \begin{equation}\label{forme1}
w =  \langle   i_1,n ]  t_{n+1}   \langle   i_2,n ] t_{n+1} \dots   \langle  i_m,n ]  t_{n+1} v_{m+1}  \\
 \end{equation} 
where $ v_{m+1} \in W^c(D_{n+1})$ and one of the following holds: 
\begin{enumerate}
\item $ n+1 \ge i_1 > \dots >i_s  > |i_{s+1}| = \dots =|i_m| = 1  $ for some  $s \le m$, and the occurrences of 
$ 1 $ and $-1 $ alternate,  or 
\item $ n+1 \ge i_1 > \dots > i_{m-1}  > -i_m \ge 0 $, $i_{m-1} > 1$, and $i_m \ne -1$,  or 
\item $ n+1 \ge i_1 \ge   i_2 =\dots = i_m = -n  $ .  
\end{enumerate}
Furthermore, in case (2) we have  $v_{m+1} = 1$. 
 \end{lemma}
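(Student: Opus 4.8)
The plan is to establish the three assertions in order: first the $\tilde B$-extremality of the inner blocks, then the reduction of each inter-$t_{n+1}$ block to a single chain $\langle i_j,n]$, and finally the trichotomy of cases together with the constraint $v_{m+1}=1$ in case (2).

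First I would settle that $u_2,\dots,u_m$ are $\tilde B$-extremal. Suppose some $u_i$ with $2\le i\le m$ contained no $\sigma_n$. Then $u_i$ is a word in $\{\sigma_1,\sigma_{\bar 1},\sigma_2,\dots,\sigma_{n-1}\}$, and each of these generators commutes with $t_{n+1}$ in $W(\tilde B_{n+1})$; hence $t_{n+1}\,u_i\,t_{n+1}=u_i\,t_{n+1}^2=u_i$, so the two flanking occurrences of $t_{n+1}$ cancel. This contradicts either the reducedness of the fixed expression or the value $L(w)=m$ of the affine length (Definition \ref{ALB}). Therefore $\sigma_n$ occurs in $u_i$, and since $u_i\in W^c(D_{n+1})$ this is exactly the content of Definition \ref{Bex}: $u_i$ equals either $\langle -n,n]$ (with $\sigma_n$ twice) or a word with $n_1=n$, $m_1>-n$ and $\sigma_n$ appearing once.

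Next I would normalize the expression into the shape (\ref{forme1}). The letters that commute with $t_{n+1}$ are precisely those of index at most $n-1$, so the $t_{n+1}$'s and the $\sigma_n$'s form the rigid ``skeleton'' of $w$, while everything else is free to slide rightward across the $t_{n+1}$'s by commutation. Writing each inner block in its Stembridge form $u_i=\langle m_1^{(i)},n]\langle m_2^{(i)},n_2^{(i)}]\cdots$, the tail $\langle m_2^{(i)},n_2^{(i)}]\cdots$ lies in $W^c(D_n)$, hence is built only from such index-$\le n-1$ letters; I would push these tails rightward, treating $u_1$ analogously, letting the residue accumulate into $v_{m+1}$. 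Since only commutation relations are used, the result is again a reduced expression for $w$ (see \cite[\S 1.5, Proposition 5]{Bourbaki_1981}), and each block strictly between two $t_{n+1}$'s is a $t_{n+1}$-free, hence parabolic and fully commutative, subword, so it belongs to $W^c(D_{n+1})$. After the sweep the block between the $j$-th and $(j{+}1)$-st occurrence of $t_{n+1}$ is a single term $\langle i_j,n]$ (or $\langle -n,n]$ in the double-$\sigma_n$ regime). The bookkeeping here is routine but must be done with care, since reducedness is what forbids a sliding tail-letter $\sigma_{n-1}$ from colliding with a leading $\langle n-1,n]$ across a $t_{n+1}$; in this way the normalization and the constraint analysis of the next step are genuinely interwoven.

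The heart of the argument, and the step I expect to be the main obstacle, is to pin down the constraints on $(i_1,\dots,i_m)$ and to separate the three cases, using full commutativity and the fact that (\ref{braid4}) is the only braid relation involving $t_{n+1}$. I would first show the dichotomy ``single versus double $\sigma_n$'' is all-or-nothing: if one inner block equals $\langle -n,n]=\sigma_n M\sigma_n$ with $M=\langle -(n-1),n-1]$, then the pattern $t_{n+1}\sigma_n M\sigma_n t_{n+1}$ is locked (here $M$ commutes with $t_{n+1}$ but not with $\sigma_n$), and matching it against its neighbours through (\ref{braid4}) forces every inner block to be $\langle -n,n]$, giving case (3). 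In the complementary regime each $\langle i_j,n]$ carries a single $\sigma_n$, and the admissibility of the sequence $(i_1,\dots,i_m)$ is governed entirely by the relations near the fork $\{\sigma_1,\sigma_{\bar 1},\sigma_2\}$, which coincide with those of $D_{n+1}$; consequently the very constraints of Theorem \ref{1_2} reappear, producing the alternation of $1$ and $-1$ and the two subcases (1) and (2). Finally, in case (2) the last block wraps down through the fork ($-i_m\ge 0$, $i_m\ne -1$, $i_{m-1}>1$), a maximal terminal configuration, so any nontrivial letter of $v_{m+1}$ would have to commute back across the last $t_{n+1}$ and either destroy reducedness or violate these constraints, whence $v_{m+1}=1$. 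The delicate part throughout is the heap-style verification that the forbidden braid patterns cannot be created by commutations, which is exactly where full commutativity enters essentially.
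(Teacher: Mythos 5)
Your proposal follows essentially the same route as the paper: $\tilde B$-extremality of the inner blocks from the commutation of $t_{n+1}$ with $W(D_{n})$, normalization by sliding the $W^c(D_{n})$-tails rightward across the $t_{n+1}$'s (recombining with the next block at each stage), and derivation of the constraints on $(i_1,\dots,i_m)$ by pushing leading letters leftward until a forbidden braid would appear. One small caveat: attributing the constraints to relations ``near the fork'' and saying a letter of $v_{m+1}$ ``would have to commute back across the last $t_{n+1}$'' are both slightly off --- the strict decrease $i_j>|i_{j+1}|$ comes from braids $\sigma_u\sigma_{u+1}\sigma_u$ at arbitrary $u$, and a leading $\sigma_n$ in $v_{m+1}$ does not commute with $t_{n+1}$ but instead yields the braid $t_{n+1}\sigma_n t_{n+1}\sigma_n$ once the low-index letters of $\langle i_m,n]$ are evacuated leftward --- but these are exactly the pushing arguments the paper itself deploys, so the method coincides.
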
 
 \begin{proof} Since $t_{n+1}$ commutes with $W( D_{n})$, 
  the fact that the expression is reduced forces  $u_i$ to be $\tilde B$-extremal for $2\le i \le m$. 
We use form~(\ref{Stembridge}) for $u_1$  and  write it as   $u_1=\langle  i_1,n ] x_1$, a reduced expression with $x_1$ in $W^c( D_{n})$ 
and $-n \le i_1 \le n+1$.  Here $x_1$ commutes with $t_{n+1}$ hence   we get a reduced expression 
$$
w =\langle i_1,n ] t_{n+1} x_1 u_2 t_{n+1} \dots u_m t_{n+1} u_{m+1}  .$$ 
Again $x_1u_2 \in W^c(B_{n+1})$  has a reduced expression $ \langle  i_2,n ] x_2$ with $-n \le i_2 \le n $ (since 
$x_1u_2$    is $\tilde B$-extremal)
and $x_2$ in $W^c( D_{n})$, and this $x_2$ commutes with $t_{n+1}$  and can be pushed to the right, leading to 
$$
w =\langle i_1,n ] t_{n+1} \langle i_2,n ] t_{n+1} x_2u_3 t_{n+1} \dots u_m t_{n+1} u_{m+1}  .$$
 Proceeding from left to right we  obtain formally form (\ref{forme1}). 

We now fix $j$, $1\le j < m$.  
\begin{itemize}
\item If $  i_{j+1}  =n$ we must have $i_j = n+1$ (and $j=1$) since, with $i_j \le n$,   our expression would contain  the braid $ \sigma_n t_{n+1}\sigma_n t_{n+1}$, contradicting   the full commutativity; 
\item while  if $ i_{j+1}  =-n$, then if $j=1$, any $i_1$ is possible, but if $j>1$    we must have $  i_{j+1}=i_j=-n$ to avoid the braid $ t_{n+1} \sigma_n t_{n+1}\sigma_n$.  
\item If $1 <| i_{j+1}|<n$ 
  the reduced expression of $u$ contains the subexpression 
$  \langle   i_j,n ]  t_{n+1}  \sigma_{| i_{j+1}|}=\langle   i_j,n ] \sigma_{| i_{j+1}|} t_{n+1}=\dots $        
where   $\sigma_{| i_{j+1}|}$   can be pushed to the left until we reach, if 
$i_j  \le  | i_{j+1}|$  the braid  $\sigma_{| i_{j+1}|}\sigma_{| i_{j+1}|+1}\sigma_{| i_{j+1}|}$, again a contradiction to the full commutativity; hence we have $i_j  >  | i_{j+1}|$. 
\item 
  If $ | i_{j+1}| =1 $ the same argument, pushing  $\sigma_{1}$ or  $\sigma_{\bar 1}$ to the left, past 
$t_{n+1}$ and further,  shows that we must have $i_j>1$ or $i_j=-i_{j+1}$, otherwise we would get  the braid  $\sigma_{1}\sigma_{2}\sigma_{1}$ or 
$\sigma_{\bar 1}\sigma_{2}\sigma_{\bar 1}$. 
\item 
Finally if $  i_{j+1}  =0$, we must have $  i_{j} >1$   to avoid the braids $\sigma_{1}\sigma_{2}\sigma_{1}$ and 
$\sigma_{\bar 1}\sigma_{2}\sigma_{\bar 1}$.
\end{itemize}
Putting together the above rules gives the possibilities  announced and a similar argument provides   the last assertion. 
  \end{proof}

This lemma leads directly to the classification of fully commutative elements in 
$W (\tilde B_{n+1})$.

\begin{theorem}\label{FCB} 
Let $ n\ge 3$. Let $w \in W^c(\tilde B_{n+1})$ with $L(w)   \ge 2$.  
Then $w$ can be written in a unique way as a reduced word of   one and only one of the following two forms, for non negative integers $p$ and $k$:  
\begin{description}
\item[First type]
\begin{equation}\label{formefinalefirsttypeB}
  w =  \langle  -i,n ]   t_{n+1} \left( \langle -n,n ] \;  t_{n+1} \right)^k  \     \langle f,n ]  ^{-1} 
\end{equation}
 with $ -n \le i \le n+1 $ and $-n \le f \le n+1 $.\\

We call such elements {\em quasi-rigid} because their reduced expression is unique up to replacing 
$ \sigma_1 \sigma_{\bar 1}  $ by $ \sigma_{\bar 1}  \sigma_1 $.\\

\item[Second type]
\begin{equation}\label{formefinalesecondtypeB}
\begin{aligned}
w &=   \langle  i_1,n ]  \; t_{n+1}   \dots   \langle  i_p,n ]  \;  t_{n+1}  
  \langle j_1,n ]  \; t_{n+1}  \langle j_2,n ]  \; t_{n+1}\dots \langle j_k,n ]  \; t_{n+1}   
\;    w_r   \quad  \text{if } p>0 ,  \\   
w &=     \langle j_1,n ]  \; t_{n+1}  \langle j_2,n ]  \; t_{n+1}\dots \langle j_k,n ]  \; t_{n+1} 
\;    w_r  \quad   \text{if } p=0, 
\\
w &=   \langle  i_1,n ]  \; t_{n+1}   \langle  i_2,n ]  \; t_{n+1} \dots   \langle  i_p,n ]  \;  t_{n+1}  
\;    w_r   \quad  \text{if } k=0  ,  \\
 \end{aligned}
 \end{equation}
  with $w_r \in W^c(D_{n+1})$   and  \\

\begin{itemize}
\item if $k > 0$:    
\begin{itemize} 
\item for $1\le s \le k $, we have   $j_s= (-1)^{a+s}$ for some integer $a$, that is to say
$j_s=\pm 1$ and  $1$ and $-1$ alternate ;  
\item  and   $w_r= \langle (-1)^{a+k+1},r_1 ] \langle (-1)^{a+k+2},r_2 ] \dots  \langle (-1)^{a+k+u},r_u ]$ 

with $ 1 \le r_u < \dots < r_1  \le n$ ; 
\end{itemize}
\item   if $p > 0$: 
 $ \  
n+1 \ge i_1>... > i_{p-1} > |i_p|     \  $; 
\item   if $p > 0$ and   $  i_p  \le 0$:   $k=0$,  $w_r=1$ and $i_p \ne -n$ ;
\item if $k =0$ and $i_p > 0$:  $w_r$ is of form (\ref{Stembridge}) with $ |m_1 |< i_p$. 
\\

\end{itemize}
\end{description}

The affine length of $w$ of the first (resp. second) type is  $k+1$ (resp. $p + k$) and we have  $ 0 \le p \le n+1  $. \\ 

Now suppose that $L(w) = 1$, then $w$ has a unique reduced expression of the form: 
 \begin{equation}\label{formefinalelongueur1B}
  \langle   i,n ]  \  t_{n+1}   \    v  
  \end{equation}
 where $-n \le  i \le n+1$ and  $v \in W^c(B_{n+1})$ and: 

\begin{itemize}
\item  if $1 < i \le n+1$  then $v$ is of the form (\ref{Stembridge}) such that for $1\leq j \leq r$ either $m_j= n-j+1$ or $|m_j|<i$;
\item   if $i \le 0 $ and  $i \ne -1$ then $v =   \langle h,n ]^{-1}$ with $-n \le h \le n+1$;
\item  if $ i=(-1)^a, $ then either $v = \langle h,n ] ^{-1}$ for $-n \le h \le n+1$, or 

 $v=\langle (-1)^{a+1},r_1 ]\langle (-1)^{a+2},r_2 ] \dots \langle (-1)^{a+u},r_u ]$ with $ 1 \le r_u < \dots < r_1  \le n$.\\
\end{itemize}

Conversely,   every $w$ of the above form is in $W^c(\tilde B_{n+1})$.

\end{theorem}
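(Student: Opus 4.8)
The plan is to prove Theorem \ref{FCB} by elaborating on Lemma \ref{lemmafull}, which already provides a reduced expression of the form (\ref{forme1}) for any fully commutative $w$ with $L(w) = m \ge 2$, subject to one of the three cases on the exponents $i_1, \dots, i_m$. The strategy splits naturally into three tasks: (i) showing that the ``first type'' and ``second type'' forms (\ref{formefinalefirsttypeB}) and (\ref{formefinalesecondtypeB}) are exactly a repackaging of the three cases of Lemma \ref{lemmafull} together with the trailing element $v_{m+1}$; (ii) establishing uniqueness, i.e.\ that no element admits two such normal forms and that the two types are disjoint; and (iii) proving the converse, that every word of the prescribed shape is indeed reduced and fully commutative. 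I would treat the $L(w) = 1$ statement (\ref{formefinalelongueur1B}) separately at the end, since affine length one is the degenerate case that Lemma \ref{lemmafull} explicitly excludes.

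First I would handle the forward direction for $L(w) \ge 2$. Starting from form (\ref{forme1}), case (2) of Lemma \ref{lemmafull} forces $v_{m+1} = 1$ and the strictly decreasing condition $i_1 > \dots > i_{m-1} > -i_m$; after reversing and taking inverses of the tail, this is precisely the first type form (\ref{formefinalefirsttypeB}), where the block $\left(\langle -n,n]\, t_{n+1}\right)^k$ absorbs the repeated $-n$ entries and the final $\langle f,n]^{-1}$ encodes the inverse tail. Case (3), with its run $i_2 = \dots = i_m = -n$, similarly feeds into the first type with $i = i_1$. Case (1), where the exponents eventually stabilize at $\pm 1$ with alternating signs, is what becomes the second type: the initial strictly decreasing segment $i_1 > \dots > i_{p-1} > |i_p|$ gives the $\langle i_s, n]$ blocks, the alternating $\pm 1$ tail gives the $\langle j_s, n]$ blocks, and $v_{m+1}$ becomes $w_r \in W^c(D_{n+1})$, whose own Stembridge form (\ref{Stembridge}) must interlock with the preceding $\pm 1$ pattern to preserve full commutativity. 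The constraints listed under ``Second type'' (the alternation of signs carrying into $w_r$, the bound $|m_1| < i_p$, and the special behavior when $i_p \le 0$) are exactly the conditions needed to rule out the forbidden braids $\sigma_1\sigma_2\sigma_1$, $\sigma_{\bar 1}\sigma_2\sigma_{\bar 1}$, $\sigma_n t_{n+1}\sigma_n t_{n+1}$ that Lemma \ref{lemmafull} already analyzed; I would verify each by the same ``push a generator left past $t_{n+1}$'' argument used in the lemma's proof. The affine length count ($k+1$ for first type, $p+k$ for second type) then follows by inspection of the number of $t_{n+1}$ occurrences and Definition \ref{ALB}.

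For uniqueness I would argue that each form is a genuine normal form: because $t_{n+1}$-occurrences are reduced-expression invariants (by the remark preceding Definition \ref{ALB}), the partition of $w$ into blocks separated by $t_{n+1}$ is canonical, and within each block the Stembridge uniqueness for $W^c(D_{n+1})$ (Theorem \ref{1_2}) pins down the exponents. The disjointness of the two types is seen from the trailing data: a first type element ends (after the $t_{n+1}$ blocks) in an \emph{inverse} tail $\langle f,n]^{-1}$ with $v_{m+1} = 1$, which manifests as a descent pattern incompatible with the alternating $\pm 1$ tail and nontrivial $w_r$ of the second type. I expect the main obstacle to be precisely this bookkeeping: carefully matching the inequalities of Lemma \ref{lemmafull} to the piecewise conditions of the theorem, and ensuring the boundary cases ($i_p \le 0$ forcing $k = 0$ and $w_r = 1$, the exclusion $i_p \ne -n$, the interaction $|m_1| < i_p$) are neither over- nor under-counted. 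This is where an off-by-one in the alternation index $a$ or a missed forbidden braid could silently break either existence or uniqueness.

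Finally, for the converse and the $L(w)=1$ case, I would check directly that any word of the stated shape is reduced and fully commutative. Reducedness follows because the $t_{n+1}$-count equals the affine length and each $\langle \cdot, n]$ block is a reduced Stembridge word commuting appropriately with $t_{n+1}$; full commutativity follows by verifying that none of the braid relations of $W(\tilde B_{n+1})$ can be applied, which reduces to checking the same local patterns at each junction between consecutive blocks. For $L(w) = 1$ the single $t_{n+1}$ leaves a word $\langle i,n]\, t_{n+1}\, v$ with $v \in W^c(B_{n+1})$, and the three bulleted subcases (according to whether $i > 1$, $i \le 0$ with $i \ne -1$, or $i = \pm 1$) are exactly the local conditions preventing a forbidden braid across the unique $t_{n+1}$; I would derive them by the same push-left analysis, noting that with only one $t_{n+1}$ the element $v$ need not be $\tilde B$-extremal, which is why its allowed forms are broader than the $w_r$ of the second type. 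The verification here is routine once the $L(w) \ge 2$ machinery is in place, so I would present it compactly.
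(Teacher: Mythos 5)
Your overall strategy --- elaborating on Lemma \ref{lemmafull}, sorting the three cases of that lemma into the two types, analysing the trailing term via Stembridge's form, and checking the converse directly --- is the same as the paper's. But your assignment of the cases to the types is wrong, and carrying it out would break the classification. Case (2) of Lemma \ref{lemmafull} (the strictly decreasing sequence $n+1 \ge i_1 > \dots > i_{m-1} > -i_m \ge 0$ with $v_{m+1}=1$) does \emph{not} produce a first type element: such a word has its $m$ occurrences of $t_{n+1}$ separated by $m$ pairwise distinct blocks $\langle i_s,n]$, whereas the first type form (\ref{formefinalefirsttypeB}) requires every block after the first to be the full block $\langle -n,n]$, in which $\sigma_n$ occurs twice. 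There is no ``reversing and taking inverses of the tail'' that converts one into the other; already for $m=2$ the element $\langle 5,n]\,t_{n+1}\,\langle 0,n]\,t_{n+1}$ (valid in case (2) for $n\ge 5$) cannot be written as $\langle -i,n]\,t_{n+1}\,\langle -n,n]\,t_{n+1}\,\langle f,n]^{-1}$, if only by comparison of Coxeter lengths. Case (2) is precisely what the second type bullet ``if $p>0$ and $i_p\le 0$: $k=0$, $w_r=1$ and $i_p\ne -n$'' is there to capture; only case (3), the run of $-n$'s, yields the first type. Under your assignment that second-type subcase is never produced and the first-type form is claimed to cover elements it cannot represent, so both existence and uniqueness of the normal form fail.

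A secondary problem is the uniqueness argument. The premise that ``the partition of $w$ into blocks separated by $t_{n+1}$ is canonical'' is false: $t_{n+1}$ commutes with every generator except $\sigma_n$, so letters migrate freely across the $t_{n+1}$'s and many block decompositions represent the same element --- this migration is exactly the pushing procedure used to \emph{produce} form (\ref{forme1}). Uniqueness must instead come from the fact that the stated forms are the output of a deterministic leftward-pushing normalisation, together with the disjointness of the parameter ranges of the two types (for instance, whether consecutive $t_{n+1}$'s are separated by a block containing two occurrences of $\sigma_n$), the parameters then determining the element. The remaining parts of your plan --- the constraints on $w_r$ coming from forbidden braids, the separate treatment of $L(w)=1$, and the converse by direct verification --- do match the paper's proof.
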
 
\begin{proof}
 We  start   with an  element 
$w \in W^c(\tilde B_{n+1})$ with $L(w) = m \ge 2 $, written  as in (\ref{forme1}). 
Lemma \ref{lemmafull}  provides us with a good part of the Theorem. Indeed we already know that the sequence $i_1, \dots, i_m$ either stops at the first non-positive integer or stabilizes as a sequence of alternating $1$ and $-1$ or as a sequence 
of $-n$, whence the classification into first and second type. So we just have to deal with the final term $w_r$,     with the same kind of arguments as in the proof of Lemma \ref{lemmafull}. 

For an element of first type, any reduced expression of  $w_r$ must start with $\sigma_n$, actually it must be some right truncation of $ \langle -n,n ]$, otherwise we would produce a braid and full commutativity would fail. The easiest way to write it is $(\langle f,n ] )^{-1} $. 

For an element of second type with $k>0$, any reduced expression of  $w_r$ must start with $\sigma_1$ if  $j_k=-1$ or $\sigma_{\bar 1}$  if  $j_k= 1$. Writing 
$w_r= \langle m_1,n_1]  \langle m_2,n_2] \dots \langle m_u, n_u] $ in form (\ref{Stembridge}) 
we see from the conditions of Theorem \ref{1_2} that indeed $m_s=(-1)^{a+k+s}$.

The fact that $w_r=1$ for $p>0$ and $i_p \le 0$ is  stated in Lemma \ref{lemmafull}. Now if $k=0$ and $i_p>0$ we write again $w_r$ in form (\ref{Stembridge}) and check that the only additional condition is $ |m_1 |< i_p$. \\

For an element  $w=\langle  i,n ]  \  t_{n+1}$, $ v    \in W^c(\tilde B_{n+1})$ of affine length   $  1 $, 
     the arguments are similar, using   form (\ref{Stembridge}) for $v$. 
If $i<-1$ or  $i=0$, 
     any reduced expression of    $v $ must start with  $\sigma_n$   on the left and  must be some right truncation of $ \langle -n,n ]$, that is, some $   \langle h,n ]^{-1}$. If $i= \pm 1$, such elements $   \langle h,n ]^{-1}$ are suitable, as well as those in form (6) that start with  $\sigma_1$ if  $i=-1$ or $\sigma_{\bar 1}$  if  $i= 1$, 
whence the result as previously. 

Assume now that  $i> 1$ and write $v= \langle m_1,n_1]  \langle m_2,n_2] \dots \langle m_u, n_u] $ in   form (\ref{Stembridge}). 
If 
$i=n+1$, there is no further condition on $v$, while if $ i\le n$,  any reduced expression of    $v $ must start with  $\sigma_n$ or with $\sigma_t$ with $t <i$.  \\  

The fact that any element of one of these forms is fully commutative is proven by an easy induction. 
\end{proof}

We remark that elements of the first type and elements of affine length $1$ of the form   $\langle i,n ]  \  t_{n+1}    \langle h,n ]^{-1}$ 
  have a unique reduced expression.  Moreover,   an element of affine length at least $2$  has a unique reduced expression if and only if it is of the first type.  
Inserting the elements of affine length   $1$ in the first type and second type sets  would not have given us a partition of the set of those  elements, as we can see in the   example of  $W^c(\tilde B_{4})$ listed in Appendix \ref{exempleB4tilde}. This is the reason  why we handle them separately.

 \begin{remark}{\rm 
We did not include $W^c(\tilde B_{3 })$ in the theorem of classification because   it is clear from the pushing algorithm in the Theorem that $W(\tilde B_{n })$ is an "affinization" of $W( D_{n})$ for $n \ge 4$, which is not the case of $W(\tilde B_{3 })$ which can be looked at as an affinisation of $W( B_{2})$ or $W( A_{3})$. The classification of elements in  $W^c(\tilde B_{3 })$ is fairly easy, we leave it to the reader by recommending to consider our algorithm of pushing while sseing it as an affinisation of  $W( B_{2})$. We choose to list in the Appendix  $W(\tilde B_{4 })$.} 
  \\  
  \end{remark}

\subsection{Reduced expressions of elements of $W^{c}(\tilde{D}_{n+1})$} 		

We call $\psi_1$ (resp. $\psi_n$) the automorphism of  $W(\tilde{D}_{n+1})$ that exchanges  $ \sigma_{\bar 1}$  with $ \sigma_1$  (resp. $ \sigma_{\bar n}$ with $ \sigma_{n}$) and fixes the other generators.
		
\begin{lemma}\label{lemmafullD}
Let $w$ be  a fully commutative element in $W(\tilde D_{n+1})$ with $L(w) =m \ge 2$. 
Fix  a reduced expression of $w$ as follows: 
$$
w =  u_1 \sigma_{\bar n}u_2 \sigma_{\bar n} \dots u_m \sigma_{\bar n} u_{m+1}  $$
with $u_i$, for $1\le i \le m+1$, a reduced expression of a fully commutative element in 
$W^c(D_{n+1})$.  
Then $u_2, \dots, u_m$ are $\tilde D$-extremal  elements, and there is a reduced expression of $w$ of the  form:  
 \begin{equation}\label{formeD}
w =\langle i_1,n]  \langle j_1,n-1] \sigma_{\bar n} \langle i_2,n]  \langle j_2,n-1] \sigma_{\bar n} \dots  \langle i_m,n]  \langle j_m,n-1] \sigma_{\bar n} v_{m+1}  \\
 \end{equation} 
where $ v_{m+1} \in W^c(D_{n+1})$,  $i_1=n+1$ or $|i_1|\le n$, $j_1=n $ or $|j_1|\le n-1$,   $|j_1|< i_1 $
or $ j_1  = - i_1 = \pm 1 $, and for $2 \le t \le m$ the elements 
$\langle i_t,n]  \langle j_t,n-1] $ belong to the following list: 
 \begin{equation}\label{listD} 	
 \begin{aligned}	    
     &(\mathbf a) \quad   \langle n+1 ,n]  \langle -(n-1) ,n-1] = \langle -(n-1), n-1]  ,     \\
				&(\mathbf b) \quad  \langle n, n]  \langle -(n-1), n-1] = \sigma_n 	 \langle -(n-1), n-1], \\  			
 		 &(\mathbf c) \quad  \langle i ,n]  \langle j ,n-1]   \text{ with }  2   \le  i \le n-1   \text{ and }    |j| < i,  \\ 
 		 &(\mathbf d) \quad  \langle i ,n]  \langle j ,n-1]   \text{ with }  i = -j = \pm 1 . 
 \end{aligned}
 \end{equation} 
 \end{lemma}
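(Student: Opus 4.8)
The plan is to mirror the proof of Lemma~\ref{lemmafull}, with the pair $(\sigma_{\bar n},\sigma_{n-1})$ now playing the role that $(t_{n+1},\sigma_n)$ played there. The structural facts I would lean on are that $\sigma_{\bar n}$ commutes with every generator of $W(\tilde D_{n+1})$ except $\sigma_{n-1}$ --- in particular it commutes with $\sigma_n$ and with the parabolic subgroup $W(D_{n-1})=\langle \sigma_1,\sigma_{\bar 1},\sigma_2,\dots,\sigma_{n-2}\rangle$ --- and that the only non-commutation relation involving $\sigma_{\bar n}$ is the order-three braid $\sigma_{n-1}\sigma_{\bar n}\sigma_{n-1}=\sigma_{\bar n}\sigma_{n-1}\sigma_{\bar n}$. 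The essential difference with type $\tilde B$ is that here $\sigma_n$ \emph{does} commute with the affine generator $\sigma_{\bar n}$, which both changes the extremality condition (it is now $\sigma_{n-1}$, not $\sigma_n$, that must be duplicated) and gives us the freedom to slide occurrences of $\sigma_n$ across $\sigma_{\bar n}$.

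First I would prove that $u_2,\dots,u_m$ are $\tilde D$-extremal. Fix $i$ with $2\le i\le m$ and look at the factor $\sigma_{\bar n}u_i\sigma_{\bar n}$. If $\sigma_{n-1}$ did not occur in $u_i$, then $u_i$ would lie in the subgroup generated by the generators distinct from $\sigma_{n-1}$, all of which commute with $\sigma_{\bar n}$; hence $\sigma_{\bar n}u_i\sigma_{\bar n}=u_i\sigma_{\bar n}^2=u_i$, lowering the number of $\sigma_{\bar n}$ and contradicting that the expression is reduced of affine length $m$. If $\sigma_{n-1}$ occurred exactly once, write $u_i=a\,\sigma_{n-1}\,b$ with $a,b$ free of $\sigma_{n-1}$, hence commuting with $\sigma_{\bar n}$; then $\sigma_{\bar n}u_i\sigma_{\bar n}=a\,\sigma_{\bar n}\sigma_{n-1}\sigma_{\bar n}\,b=a\,\sigma_{n-1}\sigma_{\bar n}\sigma_{n-1}\,b$ applies the order-three braid and so contradicts the full commutativity of $w$. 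Thus $\sigma_{n-1}$ occurs at least twice, and since the discussion preceding Definition~\ref{Bex} shows it occurs at most twice in an element of $W^c(D_{n+1})$, it occurs exactly twice, i.e.\ $u_i$ is $\tilde D$-extremal.

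Next I would run the left-to-right pushing. Writing $u_1$ in Stembridge's form~(\ref{Stembridge}), its terms $\langle m_k,n_k]$ with $n_k\in\{n-1,n\}$ (there are at most two) form the head $\langle i_1,n]\langle j_1,n-1]$, with the degenerate conventions $i_1=n+1$ or $j_1=n$ when a chain is absent, while the remaining terms lie in $W^c(D_{n-1})$ and so commute with $\sigma_{\bar n}$. I would push this $W(D_{n-1})$-tail to the right past the first $\sigma_{\bar n}$, absorb it into $u_2$, observe that multiplication on the left by a $\sigma_{n-1}$-free element keeps $u_2$ $\tilde D$-extremal, split off its head and new tail, and iterate; the accumulated final tail is $v_{m+1}\in W^c(D_{n+1})$. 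Crucially, whenever a head reduces to a single chain $\langle i,n]$ ending in $\sigma_n$ (so that $j_t=n$), the trailing $\sigma_n$ also commutes with the following $\sigma_{\bar n}$ and I would push it rightward too, rewriting e.g.\ $\langle -(n-1),n]=\langle -(n-1),n-1]\,\sigma_n$ so that the surviving head is $\langle -(n-1),n-1]$, entry~$(\mathbf a)$ of~(\ref{listD}).

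Finally I would determine the admissible heads. Each interior factor being $\tilde D$-extremal, Definition~\ref{Dex} leaves only finitely many leading-block shapes; after the normalization of $\sigma_n$ above and a sliding analysis exactly in the style of Lemma~\ref{lemmafull} --- pushing the first generator of the next head leftward through the intervening $\sigma_{\bar n}$ and checking whether it is forced to create a braid $\sigma_k\sigma_{k+1}\sigma_k$ or $\sigma_{n-1}\sigma_{\bar n}\sigma_{n-1}$ --- these shapes collapse to exactly the four entries $(\mathbf a)$--$(\mathbf d)$ of~(\ref{listD}). For the first factor, which need not be $\tilde D$-extremal, the looser constraints $|j_1|<i_1$ or $j_1=-i_1=\pm1$ (together with the range conditions on $i_1,j_1$) are simply the Stembridge conditions of Theorem~\ref{1_2} read off the two leading terms of $u_1$. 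I expect the main obstacle to be this sliding analysis: because the order-three braid at $\sigma_{\bar n}$ sits between two interleaved chains --- one running up to $\sigma_n$ and one up to $\sigma_{n-1}$ --- and because the commutation of $\sigma_n$ with $\sigma_{\bar n}$ must be exploited to normalize heads of the form $\langle i,n]$, one has to track both chains simultaneously, and it is here that essentially all of the bookkeeping resides.
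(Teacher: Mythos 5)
Your proposal is correct and follows essentially the same route as the paper: the same two-step argument for $\tilde D$-extremality of $u_2,\dots,u_m$ (no $\sigma_{n-1}$ contradicts reducedness, exactly one forces the braid $\sigma_{\bar n}\sigma_{n-1}\sigma_{\bar n}$), the same left-to-right pushing of the $W(D_{n-1})$-tails past $\sigma_{\bar n}$, the same normalization sliding a trailing $\sigma_n$ of $\langle -(n-1),n]$ across $\sigma_{\bar n}$ to reduce that case to entry $(\mathbf a)$, and the same reading of the admissible heads off Definition~\ref{Dex} together with the Stembridge conditions. The paper's write-up is terser but contains no ingredient beyond what you describe.
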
 
 \begin{proof} Since $\sigma_{\bar n}$ commutes with all the generators but $\sigma_{n-1}$, every two consecutive occurrences of  $\sigma_{\bar n}$ must have  in between an occurrence of $\sigma_{n-1}$, for the sake of keeping the expression reduced,  and this occurrence must be properly more than once,  for the sake of full commutativity;  this shows that the $u_i$ with $2\le i \le m$ must be  $\tilde D$-extremal. 

Now we proceed   as in the proof of Lemma \ref{lemmafull}: we use form (\ref{Stembridge}) 
for  $u_1$  and push  the terms $ \langle m_i,n_i]$ with   $n_i < n-1$ to the right of the leftmost 
$\sigma_{\bar n}$ (as well as the term $\sigma_n$ if $n_1=n$ and $n_2<n-1$, so that the term $\langle i_1,n]  \langle j_1,n-1]$, if non trivial, has $|j_1|\le n-1$), then we repeat this procedure with the new $u_2$ thus obtained and the $\sigma_{\bar n}$ appended at its right, and so on, until we get formally   form (\ref{formeD}). 
For the rest it is sufficient to see    Definition \ref{Dex} and notice that, in the $\tilde D$-extremal  element 
$ \langle -(n-1) ,n]  =	  \langle -(n-1), n-1] \sigma_n$, the rightmost term $\sigma_n$ can also be pushed to the right of the $\sigma_{\bar n}$ that follows it on its right, so that this element is not needed in the list. 
  \end{proof}

\begin{lemma}\label{fullandsigmaD} 
Let $w$ be  a fully commutative element in $W(\tilde D_{n+1})$ with $L(w) \!=m \! \ge 2$. We write $w$ in  form  (\ref{formeD}) from   Lemma \ref{lemmafullD}. For   $2 \le t \le m$ 
we  have either $ i_{t}>0$ or $ i_{t}=-1$. Furthermore: 

\begin{enumerate}
\item If    $j_t > 1$   and $  t \le m-1$, then $|i_{t+1}| < j_t < i_t$.  
\item If $-(n-1) <  j_t < -1$ or $j_t=0$,  then   $t=m$ and $v_{m+1} = 1$.
\item If  $j_t= \pm 1$   then   $i_s=-j_t=-j_s$ for all $s> t$, $s \le m$. 
\item If $j_t=-(n-1)$   then $j_{s}=-(n-1)$ for $2 \le s \le m$  and 
$i_{s}=n$ for $2 < s \le m$ while $i_2= n$ or $n+1$.  
 \end{enumerate}
\end{lemma}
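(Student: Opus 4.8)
The plan is to read the opening assertion straight off list~(\ref{listD}) and to obtain the four numbered statements by the device already used in Lemma~\ref{lemmafull}: starting from the reduced word~(\ref{formeD}) I carry a single generator across one of the letters $\sigma_{\bar n}$---which commutes with every generator except $\sigma_{n-1}$---until it abuts a short braid $\sigma_a\sigma_{a+1}\sigma_a$, $\sigma_2\sigma_{\bar 1}\sigma_2$ or $\sigma_{n-1}\sigma_{\bar n}\sigma_{n-1}$, contradicting full commutativity. The opening assertion is immediate, since in list~(\ref{listD}) one has $i_t=n+1$ in $(\mathbf a)$, $i_t=n$ in $(\mathbf b)$, $2\le i_t\le n-1$ in $(\mathbf c)$ and $i_t=\pm1$ in $(\mathbf d)$. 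The computations all rest on two bookkeeping remarks: every inner block $\langle i_t,n]\langle j_t,n-1]$ with $2\le t\le m$ ends in $\sigma_{n-1}$ and contains $\sigma_{n-1}$ exactly twice, while $\langle i_t,n]$ contributes a letter $\sigma_n$ exactly when $i_t\le n-1$, that is in $(\mathbf b)$, $(\mathbf c)$, $(\mathbf d)$ but not $(\mathbf a)$. What will repeatedly matter is the number of $\sigma_{n-1}$'s sitting between two consecutive $\sigma_n$'s: a single one forces the braid $\sigma_n\sigma_{n-1}\sigma_n$, whereas two of them shield it.

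\textbf{Statements (1)--(3).} For~(1), $j_t<i_t$ is part of case~$(\mathbf c)$, so I only prove $|i_{t+1}|<j_t$; the value $i_{t+1}=-1$ is harmless because $j_t\ge2$, so I assume $i_{t+1}>0$ with $i_{t+1}\ge j_t$ and push the first letter of $\langle i_{t+1},n]$ leftwards across $\sigma_{\bar n}$ into $\langle j_t,n-1]$. For $j_t\le i_{t+1}\le n-2$ it meets the consecutive pair $\sigma_{i_{t+1}}\sigma_{i_{t+1}+1}$ already present and produces $\sigma_{i_{t+1}}\sigma_{i_{t+1}+1}\sigma_{i_{t+1}}$; for $i_{t+1}\in\{n-1,n+1\}$ the block $B_{t+1}$ opens with $\sigma_{n-1}$, giving $\sigma_{n-1}\sigma_{\bar n}\sigma_{n-1}$; and for $i_{t+1}=n$ the $\sigma_n$ supplied by $\langle i_t,n]$ (present because $i_t\le n-1$ in case~$(\mathbf c)$), the single $\sigma_{n-1}$ closing $\langle j_t,n-1]$ and the opening $\sigma_n$ of $B_{t+1}$ assemble into $\sigma_n\sigma_{n-1}\sigma_n$. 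Statements~(2) and~(3) use the same pushing. When $-(n-1)<j_t<-1$ or $j_t=0$, the factor $\langle j_t,n-1]$ sweeps through all of $\sigma_1,\sigma_{\bar 1},\sigma_2,\dots,\sigma_{n-1}$, so every admissible opening letter of a successor block, and every opening letter of $v_{m+1}$, reproduces one of these braids; hence nothing may follow and $t=m$, $v_{m+1}=1$. When $j_t=-1$ (resp.\ $j_t=1$) the factor $\langle j_t,n-1]$ carries $\sigma_{\bar 1}$ but not $\sigma_1$ (resp.\ the reverse), so pushing shows that $i_{t+1}=-1$ (resp.\ $1$) yields $\sigma_{\bar 1}\sigma_2\sigma_{\bar 1}$ (resp.\ $\sigma_1\sigma_2\sigma_1$) and is excluded, while $i_{t+1}=-j_t$ survives and, being $\pm1$, forces the case~$(\mathbf d)$ pair $(i_{t+1},j_{t+1})=(-j_t,j_t)$; an immediate induction carries this to every $s>t$.

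\textbf{Statement (4).} I split it into a forward and a backward propagation. Forward: if $j_t=-(n-1)$ then $\langle j_t,n-1]=\langle-(n-1),n-1]$ is the full sweep carrying two $\sigma_{n-1}$'s, so the opening $\sigma_n$ of a case-$(\mathbf b)$ successor is shielded and no $\sigma_n\sigma_{n-1}\sigma_n$ occurs, whereas every low value of $i_{t+1}$ and the case-$(\mathbf a)$ value $i_{t+1}=n+1$ (which opens with $\sigma_{n-1}$) are killed by the braids above; thus $i_{t+1}=n$ and $j_{t+1}=-(n-1)$, and the step repeats, giving $i_s=n$ for all $s>t$. Backward: for $t-1\ge2$ the value $j_{t-1}$ cannot exceed $1$ (else~(1) would force $|i_t|<j_{t-1}\le n-1$, impossible since $i_t\in\{n,n+1\}$), cannot equal $\pm1$ (else~(3) would force $i_t=\pm1$) and cannot lie strictly between $-(n-1)$ and $-1$ nor be $0$ (else~(2) would force $t-1=m<t$); hence $j_{t-1}=-(n-1)$, and this descends to $s=2$. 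The first inner index obeys only $i_2\in\{n,n+1\}$ because $B_1$ is unconstrained and need not end in $\sigma_{n-1}$, so case~$(\mathbf a)$ is no longer obstructed at $s=2$.

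\textbf{Main obstacle.} The difficulty is not any single braid computation but the bookkeeping: one must track the three boundary indices $i_{t+1}\in\{n-1,n,n+1\}$ separately and, above all, keep an exact count of the $\sigma_{n-1}$'s separating consecutive $\sigma_n$'s, since it is precisely this count that makes $i_{t+1}=n$ fatal in~(1) but mandatory in~(4). The same care explains why the statements are confined to $2\le t\le m$: for the outer index the edge case $i_1=n+1$, in which $\langle i_1,n]$ carries no $\sigma_n$, destroys the $\sigma_n\sigma_{n-1}\sigma_n$ argument and would otherwise produce spurious counterexamples.
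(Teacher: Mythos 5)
Your proof is correct and follows essentially the same route as the paper's: the opening assertion is read off list~(\ref{listD}), statements (1)--(3) come from checking which single generator can admissibly follow $\langle i_t,n]\langle j_t,n-1]\sigma_{\bar n}$ without creating a braid, and (4) combines the forward admissibility argument with backward exclusion of forms $(\mathbf c)$ and $(\mathbf d)$ via (1)--(3). (Only a trivial slip: $\langle i_t,n]$ contains $\sigma_n$ exactly when $i_t\le n$, not $i_t\le n-1$, but your parenthetical list of forms $(\mathbf b),(\mathbf c),(\mathbf d)$ is the correct one.)
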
 

\begin{proof}  We refer to the element  $v_t= \langle i_t,n]  \langle j_t,n-1]$ according to list 
(\ref{listD})  in  Lemma \ref{lemmafullD}. We remark from that list that either $ i_{t}>0$ or $ i_{t}=-1$. 
We call an expression $\sigma_{a_1} \dots \sigma_{a_u}$ {\it admissible} if it is the reduced expression of a fully commutative element. 

In case (1) the element $v_t$  has form (c) with $j >1$. Now  $ \langle i ,n]  \langle j ,n-1]  \sigma_{\bar n} \sigma_s$  is admissible if and only if    $s<j$ or $\sigma_s=  \sigma_{\bar 1}$, since  as before: 
 $\sigma_n$ (resp. $\sigma_{n-1}$, resp.  $\sigma_u$ 
with $j\le u < n-1$)   would produce the braid  $\sigma_n  \sigma_{n-1} \sigma_n$ (resp. $   \sigma_{n-1} \sigma_{\bar n}  \sigma_{n-1}$, resp.   $\sigma_u  \sigma_{u+1} \sigma_u$).  

In case (2) the element  $v_t$ has form (c) with $-(n-1)< j <-1$ or $j=0$. There is no generator $\sigma_s$ that can make $ \langle i ,n] \langle j ,n-1] \sigma_{\bar n} \sigma_s$ admissible. 

In case (3) indeed the element  $v_t$  has form (c) or (d). Again  $\sigma_n \langle -1 ,n-1]  \sigma_{\bar n} \sigma_s$  is admissible if and only if $\sigma_s = \sigma_1$. Applying $\psi_1$ gives the other case. 

In case (4) the element  $v_t$  has form (a) or (b). We note that  $   \langle -(n-1) ,n-1]  \sigma_{\bar n} \sigma_s$  is admissible if and only if    $\sigma_s = \sigma_n$, so the only choice for $v_{t+1}$ is form (b). 
We now look for possible forms of  $v_{t-1}$ using the previous cases :  forms (c) and (d) cannot be followed by form  (a) or (b);  forms (a) and (b) are followed by (b). Hence $v_2$ may have form (a) while $v_s$ for $2 <s \le m$ must have form (b). 
 \end{proof}

These lemmas are the key to the normal form   of fully commutative elements in 
$W (\tilde D_{n+1})$.

\begin{theorem}\label{FCD} 
Let $w \in W^c(\tilde D_{n+1})$ with $L(w)   \ge 2$.  
Then $w$ can be written in a unique way as a reduced word of   one and only one of the following two forms, for non negative integers $p$ and $k$:  

\begin{description}
\item[First type]
\begin{equation}\label{formefinalefirsttypeD}  
 w =  ( \sigma_{\bar n})^\epsilon \sigma_n^\eta  \langle  i,n-1 ] \sigma_{\bar n}(\sigma_n \langle  -(n-1),n-1 ] \;   \sigma_{\bar n} )^k   \  \langle f,n ]^{-1}
\end{equation}
 with $ -n \le f \le n +1 $, $ -(n-1) \le i \le n  $, $\epsilon= 0$ or $1$, $\eta=0$ or $1$, $\epsilon \eta=0$ and if $\epsilon + \eta >0$ then 
$i=-(n-1)$. 
In other words the term $(\sigma_n \langle  -(n-1),n-1 ] \;   \sigma_{\bar n} )^k$ may be preceded either by 
$\sigma_n \langle  -(n-1),n-1 ]\sigma_{\bar n}$ or $ \langle  i,n-1 ]\sigma_{\bar n}$  with $ -(n-1) \le i \le n  $, 
or by  $\sigma_{\bar n} \langle  -(n-1),n-1 ]\sigma_{\bar n}$. 
 \\ 

\item[Second type]
\begin{equation}\label{formefinalesecondtypeD}
\begin{aligned}
w &=  { \langle   i_1,n ]  \langle   j_1,n-1  ]  \; \sigma_{\bar n}   \dots   \langle   i_p,n ]  \langle   j_p,n-1 ]  \;  \sigma_{\bar n}     (  \langle   -1,n ]  \langle   1,n-1 ]   \; \sigma_{\bar n} )^k  
\;    w_r  }  \quad  \text{if } p>0 ,  \\  
w &=       (  \langle   -1,n ]  \langle   1,n-1 ]   \; \sigma_{\bar n} )^k  
\;    w_r  \quad   \text{if } p=0,
 \end{aligned}
 \end{equation}
  with $w_r \in W^c(D_{n+1})$   and 

\begin{itemize}
\item   if $p > 0$: 
 $ \  
n+1 \ge i_1>j_1 >i_2>j_2... > i_{p}> |j_{p}| \ge 0  , i_p > 1, j_p \ne -1  \  $; 
\item   if $p > 0$ and   $j_p  \le 0$ :   $k=0$,  $w_r=1$ and $j_p \ne -(n-1)$;
\item if $k =0$ and $j_p > 1$:  $w_r$ is of form (\ref{Stembridge})   such that $ |m_1 |< j_p$; 
\item if $k > 0$ or if $k=0$ and $j_p=1$:     $w_r= \langle -1,r_1 ] \langle 1,r_2 ] \dots  \langle  (-1)^u,r_u ]$ with $ 1 \le r_u < \dots < r_1  \le n$ and $1,-1$ alternate;   \\
\end{itemize}
or $w$  is the image under $\psi_1 $ of such an element (in this case one must replace  the condition 
$j_p \ne -1$ by $j_p \ne 1$).  \\ 
\end{description}

The affine length of $w$ of the first (resp. second) type is  $k+1 + \epsilon$ (resp. $p + k$) and we have 
 $ 0 \le p \le \frac{n-1}{2}  $. \\ 

Now suppose that $L(w) = 1$, then it has a unique reduced expression of the form: 
 \begin{equation}\label{formefinalelongueur1D}
  \langle i,n]  \langle j,n-1] \sigma_{\bar n}   \    v  \qquad  \text{where } v \in W^c(D_{n+1})  \text{ has form (\ref{Stembridge}) and }
  \end{equation}

\begin{itemize} 
\item either   $  i=n+1 $, $j=n$  and  $v$ is arbitrary,

\item or  
$|j|\le n-1$, $i =-1$ or $ 1 \le i \le n+1$, $|j| < i$ or $j=-i=\pm 1$ , and: 

\begin{itemize}
\item   if $|j|=1$ and $i\le n$ then   $\mathscr{L} (v) \subseteq \{\sigma_{-j}\}$ (with the convention 
$\sigma_{-1} = \sigma_{\bar 1}$);
\item   if $|j|=1$ and $i= n+1$ then   $\mathscr{L} (v) \subseteq \{\sigma_{-j},  \sigma_n \}$;
\item  if $j<-1$ or $j=0$,  and  $i\le n$, then $v=1$; 
\item  if $j<-1$ or $j=0$,  and  $i= n+1$, then $v= \langle f,n ]^{-1} $ with $-n \le f \le n+1 $; 
\item  if $j> 1$ and $i\le n$ then  $ |m_1 |< j $; 
\item if $j> 1$ and $i= n+1$ then    either $ |m_1 |< j $ or, for some $s\le r$, 
$m_1=n_1=n$, \dots, $m_s=n_s=n-s+1$, $|m_{s+1}| <j$.  
\end{itemize}
\end{itemize}

Conversely,   every $w$ of the above form is in $W^c(\tilde D_{n+1})$. 

\end{theorem}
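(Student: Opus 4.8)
The plan is to mirror the proof of Theorem \ref{FCB}, replacing Lemma \ref{lemmafull} by the pair of Lemmas \ref{lemmafullD} and \ref{fullandsigmaD}. First I would fix $w \in W^c(\tilde D_{n+1})$ with $L(w) = m \ge 2$ and write it in form (\ref{formeD}), so that $w$ is encoded by the sequence of blocks $v_t = \langle i_t,n]\langle j_t,n-1]$ for $1 \le t \le m$ together with the tail $v_{m+1} = w_r \in W^c(D_{n+1})$. Lemma \ref{fullandsigmaD} already controls essentially all of the combinatorics of this sequence, so the first task is just to organize its four cases into the announced dichotomy.

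The dichotomy comes out as follows. By part (1) of Lemma \ref{fullandsigmaD}, as long as $j_t > 1$ we have $|i_{t+1}| < j_t < i_t$, so the blocks strictly shrink and we obtain the strictly decreasing chain $n+1 \ge i_1 > j_1 > i_2 > \dots > i_p > |j_p|$ recorded in the second-type display, with $i_p > 1$ and $j_p \ne -1$. This chain must terminate, and parts (2)--(4) list the only three ways it can do so: it stops at a block with $-(n-1) < j_t < -1$ or $j_t = 0$ (part (2), forcing $k=0$ and $w_r = 1$), it stabilizes to the alternating pattern $j_t = \pm 1$, $i_t = \mp 1$ (part (3)), or it stabilizes to the constant pattern $j_t = -(n-1)$, $i_t = n$ (part (4)). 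The last alternative is precisely the first type, the first two give the second type. I would then read the constraints on $i_1, j_1$ and on $\epsilon,\eta$ straight off these cases: the values $\epsilon,\eta$ encode whether the initial block $v_2$ entering the stabilized $-(n-1)$ pattern has form $(\mathbf a)$ or $(\mathbf b)$ of list (\ref{listD}), i.e. whether that pattern is opened through $\sigma_{\bar n}$ or through $\sigma_n$, which is exactly the content of case (4). The bound $0 \le p \le \frac{n-1}{2}$ follows by counting the $2p$ strictly decreasing nonnegative integers in the chain, and the affine-length counts $k+1+\epsilon$ and $p+k$ follow by tallying the occurrences of $\sigma_{\bar n}$.

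Next I would pin down the tail $w_r$ by the pushing argument used in the proof of Lemma \ref{lemmafull}: examine the last block together with the leftmost generators of $w_r$ and keep only those that do not create a forbidden length-three braid ($\sigma_a\sigma_{a+1}\sigma_a$, $\sigma_{n-1}\sigma_{\bar n}\sigma_{n-1}$, $\sigma_2\sigma_{\bar 1}\sigma_2$, $\sigma_n\sigma_{n-1}\sigma_n$). For first type this forces $w_r$ to be a right truncation of $\langle -n,n]$, written $\langle f,n]^{-1}$. For second type with $k>0$ (or $k=0$ and $j_p=1$) the stabilized $\pm 1$ pattern forces $w_r$ to begin with $\sigma_1$ or $\sigma_{\bar 1}$ and hence, by the constraints of Theorem \ref{1_2}, to be the alternating product $\langle -1,r_1]\langle 1,r_2]\cdots$; for $k=0$ and $j_p>1$ I would check that the single extra condition on the form (\ref{Stembridge}) of $w_r$ is $|m_1| < j_p$. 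The $\psi_1$-image clause is dispatched by symmetry, since $\psi_1$ preserves full commutativity and simply interchanges the roles of $\sigma_1$ and $\sigma_{\bar 1}$ left ambiguous by the normalization.

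Finally, for $L(w)=1$ I would write $w = \langle i,n]\langle j,n-1]\sigma_{\bar n}\, v$ and run the same admissibility analysis, now phrased in terms of $\mathscr{L}(v)$, splitting on the sign and size of $j$ and on whether $i \le n$ or $i = n+1$; this is a finite case check yielding the displayed list. The converse, that every listed word is reduced and fully commutative, I would prove by induction on $m$, peeling off the leftmost block and $\sigma_{\bar n}$ and checking that no commutation move can bring two like generators adjacent, exactly as in Theorem \ref{FCB}. I expect the main obstacle to be the tail-and-initial-block bookkeeping in the second type: keeping the $\epsilon,\eta$ alternatives, the $k=0$ degenerate cases, and the $\psi_1$-symmetry consistent with one another, rather than any single hard inequality.
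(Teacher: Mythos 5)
Your proposal follows the paper's own proof essentially step for step: write $w$ in form (\ref{formeD}), use Lemma \ref{fullandsigmaD} to split the block sequence into the stabilized $-(n-1)$ pattern (first type) versus the strictly decreasing chain ending in a non-positive $j_p$ or the alternating $\pm 1$ pattern (second type), pin down $v_1$ and $w_r$ by the braid-avoidance pushing argument, count $p \le \frac{n-1}{2}$ from the chain $n+1 \ge i_1 \ge i_p + 2p$, dispatch $\psi_1$ by symmetry, and treat $L(w)=1$ and the converse as in Theorem \ref{FCB}. The approach and all key steps coincide with the paper's argument, so the proposal is correct as written.
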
 
\begin{proof}
Let $w$ be  a fully commutative element in $W(\tilde D_{n+1})$ with $L(w)  =m  \ge 2$, written in  form  (\ref{formeD}) from   Lemma \ref{lemmafullD}. We consider the terms $ \langle i_t,n]  \langle j_t,n-1]$ 
in $w$ for $2 \le t \le m$. \\

If one of them has form (a) or (b),  
   we know from Lemma \ref{fullandsigmaD} (4)  that   all of them have form (b) except possibly the first one that can have form (a), we thus get  some power of 
$\sigma_n \langle  -(n-1),n-1 ] \;   \sigma_{\bar n} $: this is the first type. 
Same arguments as in the proof of this lemma show that the rightmost term $v_{m+1}$ has to be some right truncation of $\sigma_n \langle  -(n-1),n-1 ]\sigma_n $, 
more easily written in the form $  \langle f,n ]^{-1} $ 
 with   $-n \le f \le n+1 $. We now check the conditions on the introductory term $v_1=\langle i_1,n]  \langle j_1,n-1]$. 

If the  term with $t=2$ has form (b) we have to find conditions for the expression  
 $v_1\sigma_{\bar n}\sigma_n \langle  -(n-1),n-1 ] $ to be admissible. If non trivial, $v_1$ ends with $\sigma_{n-1}$ by the assumption $|j_1|< i_1$ and if it contains a $\sigma_n$ (i.e. $|i_1|\le n$) we must have $j_1=-(n-1)$ to forbid the braid   $\sigma_n\sigma_{n-1}\sigma_n$  (remember that 
$\sigma_{\bar n}$ and $\sigma_n$ commute). So there is no choice but $i_1=n$. We thus have a left truncation of 
$\sigma_n \langle  -(n-1),n-1 ] $. 

If the  term with $t=2$ has form (a) we inspect $v_1\sigma_{\bar n}  \langle  -(n-1),n-1 ] $.  If non trivial, $v_1$ must end with $\sigma_{n}$, which is against our convention in Lemma \ref{lemmafullD}, so $ v_1$ is trivial, hence the form for  the first type. \\  

 Otherwise those terms have form (c) or (d) and we know from Lemma \ref{fullandsigmaD}  (1) that at most the 
$[n/2] $ first terms may have  form (c), so eventually, for $t$ big enough,  all the  terms will have form 
  (d), i.e. 
we get some power of 
$  \langle - 1,n  ]  \langle  1,n-1 ] \;   \sigma_{\bar n}$ or of its image under $\psi_1$: this is the second type. 
We check first the form of   $v_{m+1}$. The proof of Lemma \ref{fullandsigmaD} (3), 
 resp. (2), resp. (1),  gives the case where $k>0$, resp. $k=0$ and $j_p\le0$, resp. $k=0$ and $j_p>0$.    
The form of  the introductory term  
$ \langle   i_1,n ]  \langle   j_1,n-1  ]  \; \sigma_{\bar n}   \dots   \langle   i_p,n ]  \langle   j_p,n-1 ]  \;  \sigma_{\bar n}  $ is an immediate consequence of the previous lemmas.   \\ 

 We have $n+1 \ge i_1 \ge i_p + 2p\ge 2 + 2p$ hence $p \le \frac{n-1}{2}$. \\ 

If $w$ has affine length $1$, as in the proof of Lemma \ref{lemmafullD}, we use the fact that 
$\sigma_{\bar n} $ commutes with all generators except $\sigma_{n-1}$ to obtain the  expression 
 $\langle i,n]  \langle j,n-1] \sigma_{\bar n}       v  $ where either $|j|\le n-1$ or $j=n$ and $i=n+1$. 
The various cases are obtained in the same way as previously, we do not detail this. 
\end{proof}
 
We note that the element of affine length $2$
$$   \sigma_{\bar n} \;    \langle  -(n-1),n-1 ] \;   \sigma_{\bar n}  \  \langle f,n ]^{-1} 
$$   is a first type element.  
Inserting the elements of affine length   $1$ in the first type and second type sets  would not have given us a partition of the set of those  elements  as we can see in the   example in  $W^c(\tilde D_{4})$ given in Appendix \ref{exempleD4tilde}.  This is the reason  why we handle them separately.

\section{The towers of fully commutative elements} 

When a Coxeter group is a parabolic subgroup of another, 
  full commutativity is preserved  by the natural injection. This is not the case for the embeddings  defined in Corollary~\ref{Coxeterinj}:    the monomorphism $G_n: W(\tilde D_{n }) \longrightarrow W(\tilde D_{n+1 })$  does not preserve  full commutativity,   even for the simple reflexion $\sigma_{\bar {n-1}}$, 
while    $L_n: W(\tilde B_{n }) \longrightarrow W(\tilde B_{n+1 })$  preserves  the full commutativity of first type elements and   elements of affine length 1    in $W^c(\tilde B_{n }) $, but  does not preserve it for $t_n\sigma_{n-1}t_n$, for example, in the set of second type fully commutative elements. We will use the normal form for fully commutative elements established in Theorem \ref{FCD} (resp. Theorem \ref{FCB})  to produce embeddings from  $W^c(\tilde B_{n})$ into $W^c(\tilde B_{n+1})$ and from 
 $W^c(\tilde D_{n}) $   into $W^c(\tilde D_{n+1}) $. \\



\subsection{The tower of $\tilde B$-type  fully commutative elements} 
 For $ n>2 $, we denote by   $W^c_1(\tilde B_{n}) $   the set of first type fully commutative elements in addition to fully commutative elements  of affine length 1, and  by  $W^c_2(\tilde B_{n}) $   the set of second type fully commutative elements. We thus  have   the following partition (under the convention $D_3 := A_3$): 
$$
W^c(\tilde B_{n}) = W^c_1(\tilde B_{n}) \bigsqcup W^c_2(\tilde B_{n})  \bigsqcup W^c(D_{n}).
$$

\begin{definition}\label{defIJB}  
For any $w\in W^c(\tilde B_{n })$ we define elements $I(w)$   and  $J(w)$ of $W (\tilde B_{n+1} ) $ by the following expressions: 
\begin{itemize}
\item  if  $w \in W^c_2(\tilde B_{n } )$, then $I(w)$  (resp.  $J(w)$) is  obtained by substituting 
$\sigma_n t_{n+1}$ (resp. $t_{n+1} \sigma_n $)  to $t_n$ in the normal form
(\ref{formefinalesecondtypeB}) for $w$;
\item if  $w \in W^c_1(\tilde B_{n } )$, then $I(w)=J(w)$ is obtained by substituting $\sigma_n t_{n+1}\sigma_n$ to $t_n$ in the normal form (\ref{formefinalefirsttypeB}) or (\ref{formefinalelongueur1B}) for $w$;  
\item if  $w \in W^c(D_{n})$,  then $I(w)=J(w)=w$.
\end{itemize}
\end{definition}

\begin{theorem} \label{IJB} 
For any $w\in W^c(\tilde B_{n })$, the 
   expressions     for $I(w)$ and $J(w)$ in Definition  \ref{defIJB} are reduced and they are reduced expressions for
 fully commutative elements  in $W(\tilde B_{n+1})$. The maps thus defined: 

$$I, J : W^c(\tilde B_{n} ) \longrightarrow W^c(\tilde B_{n+1} ) $$

\medskip\noindent
are injective, preserve the affine length  and satisfy 
  $$ \begin{aligned}
l(I(w)) &=  l(J(w)) = l(w) + L(w) \quad  \text{ for } w \in W^c_2(\tilde B_{n } ),  \\ 
   l(I(w)) &=  l(J(w)) = l(w) +2 L(w)  \quad  \text{ for }  w \in W^c_1(\tilde B_{n } ).
\end{aligned}
$$ 
The injections $I$ and $J$ map first type (resp. second type) elements to  first type (resp. second type) elements 
and their images   intersect exactly on $I(W^c_1(\tilde B_{n})  \bigsqcup W^c(D_{n}))$.  \\ 
\end{theorem}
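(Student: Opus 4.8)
The plan is to prove every assertion by exhibiting, for each $w$, an explicit reduced expression of $I(w)$ and of $J(w)$ that is \emph{literally} one of the normal forms of Theorem \ref{FCB}; reducedness and full commutativity then come for free from the ``conversely'' clause and the uniqueness part of that theorem, while the affine length and the Coxeter length can simply be read off. I would organise the argument along the partition $W^c(\tilde B_n) = W^c_1(\tilde B_n) \sqcup W^c_2(\tilde B_n) \sqcup W^c(D_n)$. On $W^c(D_n)$ everything is trivial since $I = J = \mathrm{id}$ there. On $W^c_1(\tilde B_n)$ (first type and affine length one) the substitution $t_n \mapsto \sigma_n t_{n+1}\sigma_n$ defining $I = J$ is exactly the monomorphism $L_n$ of \S\ref{themorphisms}, so Corollary \ref{longueurBC} at once gives that the substituted word is reduced and that $l(I(w)) = l(w) + 2L(w)$ with $L(I(w)) = L(w)$; it then remains only to recognise the resulting element as a normal form.

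For that recognition, and for the whole $I$-analysis, the one computational input is the family of bracket-extension identities $\langle c,n-1]\,\sigma_n = \langle c,n]$, $\ \sigma_n\langle -(n-1),n-1]\,\sigma_n = \langle -n,n]$ and $\ \sigma_n\langle f,n-1]^{-1} = \langle f,n]^{-1}$, together with the fact that $t_{n+1}$ commutes with the whole parabolic $W(D_n)$ generated by $\sigma_1,\sigma_{\bar1},\dots,\sigma_{n-1}$ but not with $\sigma_n$. Using these, a first-type word $\langle -i,n-1]\,t_n\,(\langle -(n-1),n-1]\,t_n)^k\,\langle f,n-1]^{-1}$ is carried, term by term, to $\langle -i,n]\,t_{n+1}\,(\langle -n,n]\,t_{n+1})^k\,\langle f,n]^{-1}$, i.e. to the first-type form (\ref{formefinalefirsttypeB}) of $W^c(\tilde B_{n+1})$ with the same parameters; an analogous short check places each affine-length-one $w$ into (\ref{formefinalelongueur1B}). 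For second type, $I$ replaces each block $\langle i_s,n-1]\,t_n$ by $\langle i_s,n-1]\,\sigma_n t_{n+1} = \langle i_s,n]\,t_{n+1}$, so $I(w)$ is \emph{immediately} the second-type form (\ref{formefinalesecondtypeB}) with the very same indices $(i_s),(j_s)$ and the same $w_r$; since the rank-$n$ constraints ($n \ge i_1$, etc.) a fortiori satisfy the rank-$(n+1)$ constraints, Theorem \ref{FCB} yields reducedness, full commutativity, $l(I(w)) = l(w) + L(w)$ and $L(I(w)) = L(w)$. Note that for a second-type $w$ the first index of $I(w)$ is $i_1 \le n$.

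The hard part is $J$ on $W^c_2(\tilde B_n)$, where the substitution $t_n \mapsto t_{n+1}\sigma_n$ does \emph{not} leave the word in normal form. Here I would push each $t_{n+1}$ leftwards through the preceding $W(D_n)$-bracket (legal, as $t_{n+1}$ commutes with $\sigma_1,\dots,\sigma_{n-1},\sigma_{\bar1}$) until it is blocked by the $\sigma_n$ lodged in the previous extended bracket, and simultaneously absorb each trailing $\sigma_n$ into the bracket on its right via $\langle c,n-1]\,\sigma_n = \langle c,n]$. The outcome is $J(w) = t_{n+1}\langle i_1,n]\,t_{n+1}\langle i_2,n]\cdots t_{n+1}\langle j_k,n]\,w_r$, which, regrouped as $\langle n+1,n]\,t_{n+1}\langle i_1,n]\,t_{n+1}\cdots$, is once more a second-type normal form (\ref{formefinalesecondtypeB}), now with leading index $n+1$, with $p' = p+1$ large indices $n+1 > i_1 > \cdots > i_p$, with $k' = k-1$ alternating indices, and with tail $w_r' = \langle j_k,n]\,w_r \in W^c(D_{n+1})$. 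The genuine work is checking that the alternation of $\pm1$, the strict decrease of the large indices, and the defining inequalities on $w_r'$ are all inherited from those of $w$; once this is done, reducedness, full commutativity, $l(J(w)) = l(w) + L(w)$ and $L(J(w)) = L(w)$ follow exactly as before, and the first index of $J(w)$ is $n+1$.

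Finally, injectivity and the intersection statement are formal consequences. Each of $I,J$ is type-preserving (second $\to$ second, first $\to$ first, length-one $\to$ length-one, $W^c(D_n) \to W^c(D_{n+1})$), and by the uniqueness of normal forms in Theorem \ref{FCB} these four target regions are disjoint, so it suffices to check injectivity on each stratum: on $W^c_2$ the explicit normal forms of $I(w)$ and of $J(w)$ visibly recover all parameters of $w$, while on $W^c_1(\tilde B_n)\sqcup W^c(D_n)$ both maps equal $L_n$, injective by Corollary \ref{Coxeterinj}. For the intersection, $I = J$ on $W^c_1(\tilde B_n)\sqcup W^c(D_n)$ gives $\supseteq$; conversely, if $z = I(w) = J(w')$ then $z$, $w$, $w'$ share a type, and a second-type $z$ is impossible, since then $I(w)$ would have first index $\le n$ whereas $J(w')$ has first index $n+1$. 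Hence $z$ is of first type, of affine length one, or in $W^c(D_{n+1})$, so $w \in W^c_1(\tilde B_n)\sqcup W^c(D_n)$ (using $L(w) = L(z) = 0$ in the last case), giving $z \in I\bigl(W^c_1(\tilde B_n)\sqcup W^c(D_n)\bigr)$, the exact intersection claimed.
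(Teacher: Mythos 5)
Your proposal is correct and follows essentially the same route as the paper, which itself only cites the $\tilde C$-type argument of \cite{Sadek_2017} ``mutatis mutandis'' and highlights exactly the point you develop: for second-type elements, $J$ substitutes $t_{n+1}\sigma_n$ and each $t_{n+1}$ is pushed leftward through the commuting $W(D_n)$-brackets until blocked by a $\sigma_n$, so that the normal form of $J(w)$ begins with $t_{n+1}$ (your ``leading index $n+1$'') while no reduced expression of $I(w)$ can, giving the disjointness of the two images on $W^c_2$. Your treatment of $W^c_1(\tilde B_n)\sqcup W^c(D_n)$ via $L_n$ and Corollary \ref{longueurBC}, and the recovery of parameters from the normal forms for injectivity, match the intended argument.
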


\begin{proof}
 The proof is mutatis mutandis the proof of  \cite[Theorem 5.2]{Sadek_2017}. We will only emphasize the 
main fact, which must be kept in mind for the applications. For an element $w$ of the second type, the expression of  $J(w)$ is obtained by 
substituting  $t_{n+1} \sigma_n $   to $t_n$ in the normal form for $w$. In order to find the normal form of $J(w)$ 
one uses the property  that $t_{n+1} $ commutes with $\sigma_{\bar 1}$ and $\sigma_i$ for $1 \le i < n$, hence   each $t_{n+1} $ can be pushed to its left until either it is stopped  by a $\sigma_n$ or it is in the leftmost position. In particular,   the leftmost term in the normal form of $J(w)$ is  $t_{n+1}$ whereas no reduced expression of $I(w)$ can have $t_{n+1}$ as its leftmost term (since the expression $ t_{n+1}I(w)$ is also a normal form of the second type). Consequently the images  $I(W^c_2(\tilde B_{n } ))$ and $J(W^c_2(\tilde B_{n } ))$ are disjoint.  
\end{proof} 

\medskip
We remark that the injections $I$ and $J$ on $W^c_1(\tilde C_{n})  \bigsqcup W^c(B_{n})$ are   but the restriction   of $L_n$ in (\ref{Ln}). Actually $I$ and $J$ may be defined on all   $ W(\tilde B_{n }) $, but as we don't need this, we won't examine it further. 

\subsection{The tower of $\tilde D$-type  fully commutative elements} 

 For $ n>3 $, we denote by   $W^c_1(\tilde D_{n}) $   the set of first type fully commutative elements in addition to fully commutative elements  of affine length 1, and  by  $W^c_2(\tilde D_{n}) $   the set of second type fully commutative elements. We thus  have   the following partition: 
$$
W^c(\tilde D_{n}) = W^c_1(\tilde D_{n}) \bigsqcup W^c_2(\tilde D_{n})  \bigsqcup W^c(D_{n}).
$$

\begin{definition}\label{defIJD}  
For any $w\in W^c(\tilde D_{n })$ we define elements $I(w)$   and  $J(w)$ of $W (\tilde D_{n+1} ) $ by the following expressions: 
\begin{itemize}
\item  if  $w \in W^c_2(\tilde D_{n } )$, then $I(w)$  (resp.  $J(w)$) is  obtained by substituting 
$\sigma_{n}\sigma_{n-1}\sigma_{\bar {n}}$ (resp. $\sigma_{\bar {n}}\sigma_{n-1}\sigma_{n}$)  to $\sigma_{\bar {n-1}}$ in the normal form
(\ref{formefinalesecondtypeD}) for $w$;
\item if  $w \in W^c_1(\tilde D_{n } )$, then $I(w)=J(w)$ is obtained by substituting $\sigma_{n-1}\sigma_{\bar {n}}\sigma_{n}$ to $\sigma_{\bar {n-1}}$ in the normal form (\ref{formefinalefirsttypeD}) or (\ref{formefinalelongueur1D}) for $w$, with the following exceptions: 
\begin{itemize}
\item if $w$ is a first type element with $\epsilon=1$ then the leftmost term $\sigma_{\bar {n-1}}$ in the normal form (\ref{formefinalefirsttypeD}) must be substituted by $\sigma_{n}\sigma_{\bar {n}}\sigma_{n-1}$; 
\item if $w$ has affine length $1$ with  $i\ne n$,  then  $\sigma_{\bar {n-1}}$ in   (\ref{formefinalelongueur1D}) must be substituted by $\sigma_{n}\sigma_{n-1}\sigma_{\bar {n}}$. 
\end{itemize}
\item if  $w \in W^c(D_{n})$,  then $I(w)=J(w)=w$.
\end{itemize}
\end{definition}

\begin{theorem} \label{IJD} 
For any $w\in W^c(\tilde D_{n })$, the 
   expressions     for $I(w)$ and $J(w)$ in Definition  \ref{defIJD} are reduced and they are reduced expressions for
 fully commutative elements  in $W(\tilde D_{n+1})$. The maps thus defined: 

$$I, J : W^c(\tilde D_{n} ) \longrightarrow W^c(\tilde D_{n+1} ) $$

\medskip\noindent
are injective, preserve the affine length  and satisfy 
  $$ \begin{aligned}
   l(I(w)) &=  l(J(w)) = l(w) +2 L(w).
\end{aligned}
$$

The injections $I$ and $J$ map first type (resp. second type) elements to  first type (resp. second type) elements 
and their images  intersect exactly on $I(W^c_1(\tilde D_{n})  \bigsqcup W^c(D_{n}))$.  \\ 
\end{theorem}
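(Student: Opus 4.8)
The plan is to follow the proof of Theorem~\ref{IJB} (itself modelled on \cite[Theorem 5.2]{Sadek_2017}), working directly from the explicit normal forms of Theorem~\ref{FCD} read in rank $n$. The affine generator of $W(\tilde D_n)$ is $\sigma_{\bar{n-1}}$, which commutes with every generator except $\sigma_{n-2}$, and each of its $L(w)$ occurrences in the normal form of $w$ is replaced, under $I$ or $J$, by a word of length $3$ containing exactly one letter $\sigma_{\bar n}$. Granting that the substituted word is reduced, counting letters then gives $l(I(w))=l(J(w))=l(w)+2L(w)$ and counting occurrences of $\sigma_{\bar n}$ gives $L(I(w))=L(J(w))=L(w)$, so the affine length is preserved. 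Reducedness itself I would obtain a posteriori: I will rewrite each substituted word, using only commutation relations, into one of the canonical forms of Theorem~\ref{FCD} in rank $n+1$, and since those forms are reduced the letter count is automatically the length.

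The core of the proof is this rewriting, carried out case by case on the type of $w$. For a second type element I substitute $\sigma_n\sigma_{n-1}\sigma_{\bar n}$ for $I$, resp.\ $\sigma_{\bar n}\sigma_{n-1}\sigma_n$ for $J$, and push the newly created $\sigma_n$ and $\sigma_{\bar n}$ against the adjacent blocks $\langle i_t,n-1]\langle j_t,n-2]$, using that both $\sigma_n$ and $\sigma_{\bar n}$ commute with every generator except $\sigma_{n-1}$. The one thing to check is that no order-$3$ braid $\sigma_{n-1}\sigma_n\sigma_{n-1}$ or $\sigma_{n-1}\sigma_{\bar n}\sigma_{n-1}$ is ever exposed; this is exactly guaranteed by the inequalities $i_1>j_1>i_2>\cdots$ and $|j_t|<i_t$ carried by the rank-$n$ normal form, so the output is fully commutative, lies in normal form, and visibly stays within the second type (and likewise first type stays first type). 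The delicate points are the two exceptional substitutions recorded in Definition~\ref{defIJD}: for a first type element with $\epsilon=1$ and for an affine-length-$1$ element with $i\ne n$, the uniform substitution $\sigma_{n-1}\sigma_{\bar n}\sigma_n$ strands a $\sigma_n$ in a position that would force such a braid, and I would verify by direct inspection that the modified words $\sigma_n\sigma_{\bar n}\sigma_{n-1}$, resp.\ $\sigma_n\sigma_{n-1}\sigma_{\bar n}$, are reducible to the correct first type, resp.\ affine-length-$1$, normal form.

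Injectivity then follows at once: the rewriting is reversible, so the inverse substitution recovers $w$ from the normal form of $I(w)$ or of $J(w)$, and distinct rank-$n$ normal forms produce distinct rank-$(n+1)$ normal forms. For the description of the image intersection I would argue as in Theorem~\ref{IJB}. On $W^c_1(\tilde D_n)\sqcup W^c(D_n)$ the maps coincide by definition, so both images contain $I(W^c_1(\tilde D_n)\sqcup W^c(D_n))$. On second type elements they are disjoint: the defining asymmetry is that $J$ places the affine letter $\sigma_{\bar n}$ to the \emph{left} of $\sigma_{n-1}\sigma_n$ inside each substituted block whereas $I$ places it to the right, and tracking the position of $\sigma_{\bar n}$ relative to the last $\sigma_{n-1}$ of the preceding block yields a normal-form invariant --- just as the exposed leftmost $t_{n+1}$ did for type $\tilde B$ --- that no reduced expression of an $I$-image can share with a $J$-image. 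Since $I$ and $J$ both preserve the type, no first-type image can coincide with a second-type image either, and we conclude $I(W^c(\tilde D_n))\cap J(W^c(\tilde D_n))=I(W^c_1(\tilde D_n)\sqcup W^c(D_n))$.

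I expect the principal obstacle to be the braid-avoidance bookkeeping in the rewriting step, and specifically confirming that the two exceptional substitutions are forced. One must trace precisely how each normal-form inequality translates, after substitution, into the absence of the order-$3$ braids $\sigma_{n-1}\sigma_n\sigma_{n-1}$ and $\sigma_{n-1}\sigma_{\bar n}\sigma_{n-1}$; the boundary cases $\epsilon=1$ and $i\ne n$ are exactly where the generic substitution fails and a different length-$3$ word is needed, so these must be isolated and treated individually rather than subsumed in the general pattern.
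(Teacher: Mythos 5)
Your proposal follows essentially the same route as the paper: substitute, rewrite by commutation relations into the rank-$(n+1)$ normal form of Theorem \ref{FCD}, and read off lengths, type preservation and injectivity from the uniqueness of that form, with the two exceptional substitutions of Definition \ref{defIJD} isolated and treated separately. One caution on your disjointness argument: the $\tilde B$-style ``leftmost affine letter'' criterion does not transfer to type $\tilde D$ (under $J$ the letter $\sigma_{\bar n}$ is stopped by the $\sigma_{n-1}$ ending $\langle i_1,n-1]$ and never reaches the left edge), and the paper instead separates the $I$- and $J$-images of second type elements by the \emph{rightmost} block of the normal form, which is $w_r\in W^c(D_n)$ for $I(w)$ but $\langle x,n]\,w_r$ with $x\ne n+1$ (hence containing $\sigma_n$) for $J(w)$.
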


\begin{proof} We start with $w \in W^c_2(\tilde D_{n } )$ given in form (\ref{formefinalesecondtypeD}) and notice that, since $\sigma_n$ commutes with all generators except $\sigma_{n-1}$, we have : 
$$  I  ( \langle   i_s,n-1 ]  \langle   j_s,n-2  ]  \; \sigma_{\bar{n-1}} )=  \langle   i_s,n  ]  \langle   j_s,n-1  ]  \; \sigma_{\bar{n }}$$
so that $I(w)$ is indeed the reduced expression of an element in $W^c_2(\tilde D_{n+1 } )$ with the same parameters 
$i_1, \dots, i_p, j_1, \dots, j_p, k, w_r$,  as $w$. On the other hand, again using commutation relations, we find: 
$$ 
\begin{aligned}
 J  (& \langle   i_{s-1},n-1 ]   \langle   j_{s-1},n-2  ]  \; \sigma_{\bar{n-1}}
 \langle   i_s,n-1 ]  \langle   j_s,n-2  ]  \; \sigma_{\bar{n-1}} ) \\
&=  
 \langle   i_{s-1},n-1 ] \sigma_{\bar{n }}  \langle   j_{s-1},n  ]  \; 
 \langle   i_s,n-1 ] \sigma_{\bar{n }} \langle   j_s,n   ]  . 
\end{aligned}$$  
Hence $J(w)$ is also the reduced expression  of an element in $W^c_2(\tilde D_{n+1 } )$, 
here with a shift in parameters:  if we denote with primes the parameters for $J(w)$ we have 
$$i^\prime_1= n+1, j^\prime_1= i_1, i^\prime_2= j_1, j^\prime_2= i_2, \dots, j^\prime_p=i_p, 
w_r^\prime =  \langle   x,n  ] w_r,  $$
with $x=j_p$ if $k=0$, or $x = \pm 1$, so $x \ne n+1$. 
In particular this shift acts on the sequence of $1$ and $-1$,  so $J(w)$ will be the image under $\psi_1$ of an element in form (\ref{formefinalesecondtypeD}). 

The injectivity of both maps is clear from the unicity of the normal form,  since the parameters of the images determine the parameters of the source element. Furthermore we remark that in the normal form for $I(w)$, the term on the right of the rightmost $\sigma_{\bar{n }} $ is $w_r $ which belongs to $ W^c (  D_{n } )$ whereas 
for $J(w)$ the corresponding term is  $\langle   x,n  ] w_r $ which contains $\sigma_n$, so  the images of $I$ and $J$ on second type elements do not intersect. 

\medskip  We now turn to first type elements and take 
  $w $ given in form (\ref{formefinalefirsttypeD}). Here  
$$  I  ( \langle   i ,n-2 ]     \; \sigma_{\bar{n-1}}  \sigma_{ {n-1}} \langle  -(n-2) ,n-2 ]  \sigma_{\bar{n-1}})=  
\langle   i ,n-1 ]
\sigma_{\bar {n}}\sigma_{n}\langle  -(n-1) ,n-1 ] \sigma_{\bar {n}} \sigma_n 
$$
 so, if $\epsilon=0$,  $I(w)$ is  the reduced expression of  a first type element,   with the same parameters $\epsilon, \eta, i, k, f$ if $\eta=0$, while if  $\eta=1$ (hence $i=-(n-2)$)  the parameters of $I(w)$, written with a prime, are 
$\epsilon'=\epsilon, \eta'=0, i'=-(n-1), k'=k, f'=f$.   
 If $\epsilon=1$, our element starts with 
 $\sigma_{\bar{n-1}}  \langle  -(n-2),n-2 ]\sigma_{\bar{n-1}} $ and the previous substitution for the leftmost 
 $\sigma_{\bar{n-1}} $ would produce, again for commutation reasons, the braid 
$\sigma_{n}\sigma_{n-1}\sigma_{n}$. We   substitute instead  $\sigma_{\bar {n}}\sigma_{n}\sigma_{n-1}$, getting: 
$$  I  (     \; \sigma_{\bar{n-1}}   \langle  -(n-2) ,n-2 ]  \sigma_{\bar{n-1}} \sigma_{ {n-1}} \cdots)=  
\sigma_{\bar {n}}  \sigma_{n} \langle  -(n-1) ,n-1 ] \sigma_{\bar {n}} \sigma_n \sigma_{ {n-1}} \cdots$$
hence a first type element with parameters  $\epsilon^\prime = \eta^\prime = 0$, 
$i^\prime = n$, $k^\prime = k +1$ and $f^\prime =f$. The set of such parameters is disjoint from the set obtained with $\epsilon =0$, indeed for those we had $i^\prime \le n-1$,   injectivity for first type follows. \\

The last case to consider is elements of affine length $1$. Here substituting $\sigma_{n-1}\sigma_{\bar {n}}\sigma_{n}$ to $\sigma_{\bar {n-1}}$ in the normal form (\ref{formefinalefirsttypeD})  transforms 
$  \langle i,n-1]  \langle j,n-2] \sigma_{\bar {n-1}}  \    v  $ into 
$  \langle i,n-1]  \langle j,n-1] \sigma_{\bar n}  \sigma_{n} \    v $ which is indeed, if $i=n$, the reduced expression 
of a fully commutative element in the required form (\ref{formefinalefirsttypeD}), 
with parameters $i^\prime = n+1$, $j^\prime =j$ and $v^\prime = \sigma_{n}   v $. If $i < n$ the correct substitution is 
$\sigma_{n}\sigma_{n-1}\sigma_{\bar {n}}$, resulting in 
$  \langle i,n ]  \langle j,n-1] \sigma_{\bar n}    \    v $ that again has the required form, with parameters $i^\prime = i$, $j^\prime =j$ and $v^\prime =    v $. The other assertions follow as previously, in particular injectivity follows from the fact that the parameters $i$, $j$ and $v$ determine a fully commutative element of affine length $1$.    
\end{proof}


\section{Faithfulness of towers of  Temperley-Lieb algebras}\label{TL}  

Let $(K,q)$ be as above. Let  $\Gamma$ be a Coxeter graph with associated Coxeter system $(W(\Gamma),S)$ and    Hecke algebra  $H\Gamma(q)$.  Following Graham \cite[Definition 6.1]{Graham}, 
we define the $\Gamma$-type Temperley-Lieb algebra $TL\Gamma(q)$ to be the quotient of the Hecke algebra 
 $H\Gamma(q)$ by the two-sided ideal generated by the elements $ L_{s,t} = \sum_{w\in <s,t>} g_w $, 
where $s$ and $ t  $ are non commuting elements in $S $ such that $st$ has finite order.  For $w$ in $W(\Gamma)$ we denote by $T_w$ the image of $g_w \in H\Gamma(q)$ under the canonical surjection from $H\Gamma(q)$ onto $TL\Gamma(q) $.  The set $\left\{ T_w \ | \ w \in W^c(\Gamma) \right\}$ forms a  $K$-basis for $TL\Gamma(q)$ \cite[Theorem 6.2]{Graham}. \\

	 For $x,y$ in a given ring with  identity,  we define: 
$$\begin{aligned}
V(x,y) &= xyx+xy+yx+x+y+1, \\ 
  Z(x,y)   &=  xyxy+xyx+yxy+xy+yx+x+y+1
.
\end{aligned}
$$

\subsection{Faithfulness of the tower of $\tilde B$-type Temperley-Lieb algebras}\label{TLB}
		
			For $ n\geq 2$, the $\tilde{B}$-type Temperley-Lieb algebra with $n+2$ generators $TL\tilde{B}_{n+1}(q)$ is given by the   set of generators $\left\{ T_{\sigma_{\bar 1}}, T_{\sigma_{1}}, \dots T_{\sigma_{n}}, T_{t_{n+1}}\right\}$, with the defining relations:\\
	 	\begin{equation}\label{definingrelationsB} 	
	   \quad \left\{ \quad  \begin{aligned}
		 &T_{\sigma_{i}} T_{\sigma_{j}} =T_{\sigma_{j}} T_{\sigma_{i}}  \text{ for } 1 \leq i,j\leq n \text{ and } \left| i-j\right| \geq 2, \\
		  &T_{\sigma_{i}} T_{t_{n+1}} =T_{t_{n+1}} T_{\sigma_{i}}   \text{ for  }  1\leq i \leq n-1, \\
	          &T_{\sigma_{i}}T_{\sigma_{i+1}}T_{\sigma_{i}} = T_{\sigma_{i+1}}T_{\sigma_{i}}T_{\sigma_{i+1}}  \text{ for }  1\leq i\leq n-1,\\
    &T_{\sigma_{\bar 1}}T_{\sigma_{2}}T_{\sigma_{\bar 1}}  =T_{\sigma_{2}}  T_{\sigma_{\bar 1}} T_{\sigma_{2}} ,\\
         	    &T_{\sigma_{\bar 1}}T_{t_{n+1}}  =T_{t_{n+1}} T_{\sigma_{\bar 1}}   \text{ and } T_{\sigma_{\bar 1}}T_{\sigma_{i}}  =T_{\sigma_{i}}  T_{\sigma_{\bar 1}}  \text{ for  }  1\leq i \leq n,  i\ne  2  ,\\
			    &T_{t_{n+1}}T_{\sigma_{n}}T_{t_{n+1}}T_{\sigma_{n}} =T_{\sigma_{n}}  T_{t_{n+1}} T_{\sigma_{n}}  T_{t_{n+1}}  ,\\
			    &T^{2}  = (q-1)T  +q    \text{ for }  T \in \left\{ T_{\sigma_{\bar 1}}, T_{\sigma_{1}}, \dots T_{\sigma_{n}}, T_{t_{n+1}}\right\},\\
						  &V(  T_{\sigma_{\bar 1}}, T_{\sigma_{2}} ) = 0 \text{ and } V(T_{\sigma_{i}},T_{\sigma_{i+1}}) = 0  \text{  for }1\leq i\leq n-1, \\
						  &  Z(T_{\sigma_{n}},T_{t_{n+1}})= 0  .
			  \end{aligned} \right.     \qquad 
		\end{equation}
 
 We set $TL\tilde{B}_{2}(q)= K$. We temporarily denote by 
$h_w$, $w \in W^c(\tilde{B}_n)$, the basis elements of  $TL\tilde{B}_{n}(q)$ to distinguish them from those of $TL\tilde{B}_{n+1}(q)$.

		                     
                    \begin{lemma}\label{morphismFnB} 
The morphism of algebras $Q_n: H\tilde{B}_n (q)   \longrightarrow   H\tilde{B}_{n+1} (q) $ defined 
in (\ref{defQn})   induces the following morphism of algebras, which we also denote by  $Q_n$:	
			\begin{eqnarray}
				Q_{n}: TL\tilde{B}_{n}(q) &\longrightarrow& TL\tilde{B}_{n+1}(q) \nonumber\\
				h_{\sigma_{i}} &\longmapsto & T_{\sigma_{i}} \  \   \text{ for } 1 \leq i \leq n-1 \nonumber\\
h_{\sigma_{\bar 1}} &\longmapsto & T_{\sigma_{\bar 1}}   \nonumber\\
				h_{t_{n}} &\longmapsto & T_{\sigma_{n}} T_{t_{n+1}} T^{-1}_{\sigma_{n}}. \nonumber
			\end{eqnarray}
The restriction of $Q_n$ to $  TLD_{n }(q)$ is an injective morphism into $TLD_{n+1}(q)$  and satisfies $Q_n(h_w)= g_{I(w)}= g_{J(w)}$  for  $w \in W^c( D_{n})$. 
                    \end{lemma}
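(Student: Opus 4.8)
The plan is to use that $Q_n$ is already an algebra homomorphism $H\tilde B_n(q)\to H\tilde B_{n+1}(q)$ (namely (\ref{defQn})) and to show it descends to the Temperley--Lieb quotients. Write $TL\tilde B_m(q)=H\tilde B_m(q)/\mathcal I_m$, where $\mathcal I_m$ is the two-sided ideal generated by the elements $L_{s,t}=\sum_{w\in\langle s,t\rangle}g_w$ attached to the non-commuting pairs $s,t$ of generators with $st$ of finite order. To get the induced map it suffices to prove $Q_n(\mathcal I_n)\subseteq\mathcal I_{n+1}$, and since $Q_n$ is multiplicative and $\mathcal I_n$ is generated by the $L_{s,t}$, I only need to check $Q_n(L_{s,t})\in\mathcal I_{n+1}$ for each such pair in $W(\tilde B_n)$.

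First I would dispose of the pairs not involving $t_n$, namely $(\sigma_{\bar 1},\sigma_2)$ and $(\sigma_i,\sigma_{i+1})$ for $1\le i\le n-2$; on each of these $Q_n$ fixes both generators, so $Q_n(L_{s,t})$ is verbatim a generator of $\mathcal I_{n+1}$ and nothing is to be done. The single remaining pair is $(\sigma_{n-1},t_n)$, for which $\sigma_{n-1}t_n$ has order $4$, so that $Q_n(L_{\sigma_{n-1},t_n})$ projects in $TL\tilde B_{n+1}(q)$ to $Z(T_{\sigma_{n-1}},\tau)$, with $\tau=T_{\sigma_n}T_{t_{n+1}}T_{\sigma_n}^{-1}$ the image of $g_{t_n}$ (the inverse being the one furnished by (\ref{inverse}), legitimate since $q$ is invertible). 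The heart of the matter, and the step I expect to be the main obstacle, is therefore to verify
$$Z\bigl(T_{\sigma_{n-1}},\,T_{\sigma_n}T_{t_{n+1}}T_{\sigma_n}^{-1}\bigr)=0\qquad\text{in }TL\tilde B_{n+1}(q).$$
This identity lives entirely in the subalgebra generated by $T_{\sigma_{n-1}},T_{\sigma_n},T_{t_{n+1}}$, whose underlying Coxeter diagram is of type $B_3$, and it uses only the relations $V(T_{\sigma_{n-1}},T_{\sigma_n})=0$, $Z(T_{\sigma_n},T_{t_{n+1}})=0$, the commutation of $T_{\sigma_{n-1}}$ with $T_{t_{n+1}}$, and the quadratic relations. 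Because this local configuration is identical in types $\tilde B$ and $\tilde C$, the computation is word for word the one already performed for $R_n$ in \cite{Sadek_2017}, which I would invoke rather than redo. Granting it, $Q_n$ descends, and its values on the generators $h_{\sigma_i}$, $h_{\sigma_{\bar 1}}$, $h_{t_n}$ are exactly those displayed in the statement.

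It remains to treat the restriction to $TLD_n(q)$, the subalgebra of $TL\tilde B_n(q)$ generated by $T_{\sigma_{\bar 1}},T_{\sigma_1},\dots,T_{\sigma_{n-1}}$. Since $Q_n$ fixes each of these generators, it carries $TLD_n(q)$ into the subalgebra of $TL\tilde B_{n+1}(q)$ they generate, which is precisely the copy of $TLD_n(q)$ sitting inside $TLD_{n+1}(q)$. For $w\in W^c(D_n)$ one has $I(w)=J(w)=w$ by Definition \ref{defIJB}; fixing a reduced word $w=s_1\cdots s_k$ with each $s_j\in\{\sigma_{\bar 1},\sigma_1,\dots,\sigma_{n-1}\}$ gives $h_w=h_{s_1}\cdots h_{s_k}$, hence $Q_n(h_w)=T_{s_1}\cdots T_{s_k}=T_w=T_{I(w)}=T_{J(w)}$, which is the asserted formula. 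Finally, full commutativity is preserved under the parabolic inclusion $W(D_n)\hookrightarrow W(D_{n+1})$, so $W^c(D_n)\subseteq W^c(D_{n+1})$, and by Graham's basis theorem \cite[Theorem 6.2]{Graham} the elements $\{T_w:w\in W^c(D_n)\}$ form part of the basis $\{T_x:x\in W^c(D_{n+1})\}$ of $TLD_{n+1}(q)$. Thus $Q_n$ sends the basis $\{h_w:w\in W^c(D_n)\}$ of $TLD_n(q)$ injectively onto a subfamily of a basis, and injectivity of the restriction follows.
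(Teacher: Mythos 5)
Your proposal is correct and follows essentially the same route as the paper, whose entire proof is a citation of the type $\tilde C$ analogue \cite[Lemma 6.1]{Sadek_2017}: the only non-trivial point is the vanishing of $Z\bigl(T_{\sigma_{n-1}},T_{\sigma_n}T_{t_{n+1}}T_{\sigma_n}^{-1}\bigr)$, which lives in a $B_3$-configuration identical in types $\tilde B$ and $\tilde C$, exactly as you observe. Your explicit reduction to the ideal generators $L_{s,t}$ and the basis argument for injectivity on $TLD_n(q)$ are correct fillings-in of details the paper leaves to that reference.
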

                    
                    \begin{proof} See  \cite[Lemma 6.1]{Sadek_2017}.
                    \end{proof}
		 
		 The aim of this section is to show, using  the normal form  of Theorem \ref{FCB},   that the morphism $Q_n$ is an  injection.  
We  set $p=1/q$, so that we have: 
\begin{equation}\label{basicB} 
 Q_n(h_{t_{n}}) =   p T_{\sigma_{n}t_{n+1}\sigma_{n}}+ 
  (p-1) T_{\sigma_{n}t_{n+1}}. 
\end{equation}

\begin{lemma}\label{formulaB} 
 
If  $ w \in W^c_1(\tilde B_{n}) \bigsqcup W^c(D_{n}) $ we have: 
$$
Q_n(h_w) =    p^{L(w)}   T_{I(w)} 
+   \sum_{\begin{smallmatrix}L(x)\le L(w)\\ l(x)<l(I(w))\end{smallmatrix}}  \alpha_x T_x  \qquad (\alpha_x \in K). 
$$

\end{lemma}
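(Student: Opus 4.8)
The plan is to apply the algebra morphism $Q_n$ directly to a reduced factorization of $w$ and then expand, keeping track simultaneously of the Coxeter length and the affine length of every basis element that survives.

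First I would fix the normal form of $w$ from Theorem \ref{FCB} (form (\ref{formefinalefirsttypeB}) or (\ref{formefinalelongueur1B}) when $w\in W^c_1(\tilde B_n)$) and write it as a reduced decomposition $w = a_0\, t_n\, a_1\, t_n\cdots t_n\, a_m$ with $m = L(w)$ and each $a_j\in W^c(D_n)$; for $w\in W^c(D_n)$ one has $m=0$ and the statement is already contained in Lemma \ref{morphismFnB}. Since $a_0 t_n a_1\cdots t_n a_m$ is reduced, the corresponding $g$'s multiply without correction terms in the Hecke algebra, so that, passing to the quotient, $h_w = h_{a_0} h_{t_n} h_{a_1}\cdots h_{t_n} h_{a_m}$. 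Because $Q_n$ is a morphism of algebras, I apply it factor by factor, using $Q_n(h_{a_j}) = T_{a_j}$ from Lemma \ref{morphismFnB} and the basic identity (\ref{basicB}), namely $Q_n(h_{t_n}) = p\,T_{\sigma_n t_{n+1}\sigma_n} + (p-1)\,T_{\sigma_n t_{n+1}}$. This presents $Q_n(h_w)$ as $T_{a_0}$ times a product of $m$ binomial factors, interleaved with the $T_{a_j}$.

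Next I would expand over the $m$ binomials: a term is indexed by the subset $B\subseteq\{1,\dots,m\}$ of positions at which the second summand $(p-1)T_{\sigma_n t_{n+1}}$ is chosen, with scalar $p^{m-|B|}(p-1)^{|B|}$. The leading term is $B=\emptyset$: it equals $p^m$ times the product of generator images along the word $a_0(\sigma_n t_{n+1}\sigma_n)a_1\cdots(\sigma_n t_{n+1}\sigma_n)a_m$, which by Theorem \ref{IJB} is a reduced expression for $I(w)$; hence this term is exactly $p^{L(w)}T_{I(w)}$. For $B\ne\emptyset$ the associated word $P_B$ is a product of $l(I(w))-|B| < l(I(w))$ generators containing exactly $m$ occurrences of $t_{n+1}$.

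It then remains to control the reduction of each $P_B$ to the fully commutative basis. The length bound is routine: the defining relations (\ref{definingrelationsB}) never increase word length (the quadratic relation and the relations $V,Z$ rewrite a top word as a combination of strictly shorter ones, while the braid and commutation relations preserve length), so every $T_x$ occurring has $l(x)\le l(P_B) < l(I(w))$; in particular $T_{I(w)}$ cannot reappear, and its coefficient is exactly $p^{L(w)}$. The step I expect to need the most care is the affine-length bound $L(x)\le L(w)$: I would argue that $\{T_x : L(x)\le k\}$ spans an algebra filtration, because no relation in (\ref{definingrelationsB}) raises the number of occurrences of $t_{n+1}$ — the quadratic relation on $T_{t_{n+1}}$ strictly lowers it, the braid relation and $Z(T_{\sigma_n},T_{t_{n+1}})=0$ are balanced in $t_{n+1}$, and every remaining relation is free of $t_{n+1}$. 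Since $P_B$ carries exactly $m=L(w)$ letters $t_{n+1}$, its image lies in the $m$-th filtration step, giving $L(x)\le L(w)$ for all surviving $T_x$. As a cross-check, the same conclusion can be obtained by pushing the Hecke-level statement of Proposition \ref{coroBC} through the surjection onto $TL\tilde B_{n+1}(q)$ and using that $L_n(w)=I(w)$ on $W^c_1(\tilde B_n)\sqcup W^c(D_n)$, at the cost of separately identifying the leading coefficient as $p^{L(w)}$.
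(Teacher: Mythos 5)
Your proof is correct and follows essentially the route the paper takes (it delegates to the type $\tilde C$ analogue, \cite[Lemma 6.2]{Sadek\_2017}): expand $Q_n(h_w)$ along the normal form using (\ref{basicB}), identify the unique maximal-length term $p^{L(w)}T_{I(w)}$ via Theorem \ref{IJB}, and control the remaining terms by the Coxeter-length and affine-length filtrations. No gaps.
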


\begin{proof} Same as \cite[Lemma 6.2]{Sadek_2017}. 
   \end{proof} 

 \begin{proposition}\label{formula2B}
  Let $w$ be in $W^c_2(\tilde B_{n}) $, then for some $\alpha_x, \beta_y  \in K$ we have
 $$
Q_n(h_w) =   (-1)^{L(w)}   T_{I(w)} +  (-p)^{L(w)}   T_{J(w)} +  \sum_{\begin{smallmatrix}L(y)=L(w)\\ l(y)<l(I(w))\end{smallmatrix}}  \beta_y T_y 
+  \sum_{L(x)< L(I(w))}  \alpha_x T_x.
$$
 \end{proposition}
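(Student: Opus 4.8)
The plan is to compute $Q_n(h_w)$ directly from the reduced normal form (\ref{formefinalesecondtypeB}) of $w$, organizing the result by the pair (affine length, Coxeter length), where a term $T_x$ counts as \emph{lower} than $T_y$ when $L(x)<L(y)$, or $L(x)=L(y)$ and $l(x)<l(y)$. First I would use that $w$ is fully commutative, so that $h_w$ is the product of the generators $T_s$ read off any reduced expression of $w$ \cite[Theorem 6.2]{Graham}; applying the algebra morphism $Q_n$ then presents $Q_n(h_w)$ as the corresponding product in $TL\tilde B_{n+1}(q)$, in which the generators of $\Sigma$ are fixed and each of the $L(w)$ occurrences of $t_n$ is replaced, by (\ref{basicB}), with $p\,T_{\sigma_n t_{n+1}\sigma_n}+(p-1)T_{\sigma_n t_{n+1}}$. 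Expanding gives $2^{L(w)}$ monomials in the $T$'s, one per choice, at each occurrence of $t_n$, between the two summands of (\ref{basicB}).

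Next I would reduce each monomial to the Graham basis using the relations (\ref{definingrelationsB}). Three mechanisms occur: the commutations of $T_{t_{n+1}}$ with $T_{\sigma_i}$ ($i<n$) and of $T_{\sigma_n}$ with $T_{\sigma_i}$ ($i<n-1$) move letters around; the braid relation $V(T_{\sigma_{n-1}},T_{\sigma_n})=0$ lets a $\sigma_n$ produced by (\ref{basicB}) cross the block $\langle i_s,n-1]$ adjacent to it; and the relation $Z(T_{\sigma_n},T_{t_{n+1}})=0$ rewrites any non–fully-commutative product $T_{\sigma_n}T_{t_{n+1}}T_{\sigma_n}T_{t_{n+1}}$, thereby strictly lowering the affine length. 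The first two mechanisms preserve affine length and the third lowers it, so every monomial reduces to basis elements of affine length $\le L(w)$. The monomials keeping affine length exactly $L(w)$ are those in which no $Z$-reduction is forced, and among these the ones of maximal Coxeter length $l(w)+L(w)$ are, by Theorem \ref{IJB}, exactly $T_{I(w)}$ and $T_{J(w)}$: the element $I(w)$ is produced by selecting $\sigma_n t_{n+1}$ at every $t_n$ (which already is its reduced normal form), while $J(w)$ arises when the trailing $\sigma_n$ of $\sigma_n t_{n+1}\sigma_n$ is pushed, across the intervening $D$-blocks, to the left of the neighbouring $t_{n+1}$.

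To pin down the two leading coefficients I would induct on $L(w)$, peeling off a boundary $t_n$-block and feeding the shorter normal form through the inductive hypothesis. At each step the contributions keeping both $L(w)$ and the Coxeter length $l(w)+L(w)$ maximal collapse, after applying $Z(T_{\sigma_n},T_{t_{n+1}})=0$ together with the quadratic relation $T_{\sigma_n}^2=(q-1)T_{\sigma_n}+q$, to the single basis elements $T_{I(w)}$ and $T_{J(w)}$ with accumulated coefficients $(-1)^{L(w)}$ and $(-p)^{L(w)}$; everything else is lower and is absorbed into the $\beta_y$ sum (same affine length, smaller Coxeter length) or the $\alpha_x$ sum (smaller affine length). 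That $I(w)\ne J(w)$ and that no third basis element shares their top bidegree again follows from Theorem \ref{IJB}, where the images of $I$ and $J$ on second type elements are shown to be disjoint.

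The main obstacle is the exact bookkeeping of these coefficients: the naive ``all $\sigma_n t_{n+1}$'' choice alone contributes $(p-1)^{L(w)}T_{I(w)}$, so the stated value $(-1)^{L(w)}$ (and likewise $(-p)^{L(w)}$ for $T_{J(w)}$) only emerges after collecting the top-bidegree residues of the $Z$-reductions of the remaining $2^{L(w)}-1$ monomials, whose signs come from transposing the length-four term across the all-positive relation $Z(T_{\sigma_n},T_{t_{n+1}})=0$. Showing that at top bidegree everything except the two claimed contributions cancels is the delicate point, and it is precisely where the induction and the explicit shape of $Z(T_{\sigma_n},T_{t_{n+1}})=0$ do the work; this is the analogue for type $\tilde B$ of the type-$\tilde C$ computation of \cite[Lemma 6.2]{Sadek_2017}, to which the first-type case (Lemma \ref{formulaB}) was already reduced.
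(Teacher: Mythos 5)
Your overall strategy is the right one, and it is essentially the strategy the paper itself relies on (the paper simply defers to the identical computation for type $\tilde C$ in \cite[Proposition 6.3]{Sadek_2017}): expand $Q_n(h_w)$ from the normal form (\ref{formefinalesecondtypeB}) by applying (\ref{basicB}) at each of the $L(w)$ occurrences of $t_n$, order the resulting basis elements by the pair $(L,l)$, and identify $T_{I(w)}$ and $T_{J(w)}$ as the only survivors at the top bidegree $\left(L(w),\, l(w)+L(w)\right)$, with Theorem \ref{IJB} guaranteeing that $I(w)\ne J(w)$ for second type elements.

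However, your second paragraph contains a claim that is false and that would, if used, destroy the bookkeeping. You assert that $Z(T_{\sigma_n},T_{t_{n+1}})=0$ ``strictly lower[s] the affine length'' and hence that ``the monomials keeping affine length exactly $L(w)$ are those in which no $Z$-reduction is forced.'' Since $Z(x,y)=xyxy+xyx+yxy+\cdots$, rewriting $T_{t_{n+1}}T_{\sigma_n}T_{t_{n+1}}T_{\sigma_n}$ produces among its residues the term $-T_{t_{n+1}\sigma_n t_{n+1}}$, which still carries two occurrences of $t_{n+1}$: the $Z$-reduction lowers the Coxeter length by one but preserves the affine length, and the $V$-reduction of $\sigma_n\sigma_{n-1}\sigma_n$ behaves the same way. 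Test this on the smallest example $w=t_n\sigma_{n-1}t_n$ (cited in the paper as a second type element, with $L(w)=2$ and $l(I(w))=5$): of the four monomials, the all-$\sigma_nt_{n+1}$ one gives $(p-1)^2T_{I(w)}$, while the other three --- each of which forces a $V$- or $Z$-reduction --- contribute $-p(p-1)$, $-p(p-1)$ and $+p^2$ to the coefficient of $T_{I(w)}$ precisely through length-preserving residues of those reductions; only the sum of all four collapses to $1=(-1)^2$. Likewise $T_{J(w)}$ occurs, with coefficient $p^2$, only inside the $Z$-reduction of the all-$\sigma_nt_{n+1}\sigma_n$ monomial. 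So the contributions your argument discards are exactly the ones that produce the stated coefficients, and your closing paragraph --- which concedes that the $Z$-residues do contribute at top bidegree --- contradicts the earlier claim without resolving it. The inductive cancellation that would yield $(-1)^{L(w)}$ and $(-p)^{L(w)}$, and the verification that no third basis element survives at the top bidegree, constitute the entire content of the proposition and remain unproved in your proposal.
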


\begin{proof} Here also the proof is exactly the same as for type $\tilde C$ in \cite[Proposition 6.3]{Sadek_2017}. Indeed   the proof in {\it loc.cit.} relies on the relations involving 
$T_{\sigma_{n-1}}$, $T_{\sigma_{n}}$ and  $T_{t_{n+1}}$, that are the same in $TL\tilde{B}_{n+1}(q)$  and 
$TL\tilde{C}_{n+1}(q) $, and on the fact that  $T_{t_{n+1}}$ commutes with  $R_n( TLB_{n }(q))$, 
here replaced by 
$Q_n( TLD_{n }(q))$. We mention the following more precise statement from {\it loc.cit.}:   \\  

{\it   Let $w$ be in $W^c_2(\tilde B_{n}) $, whose normal form  (\ref{formefinalesecondtypeB})  begins and ends with $t_n$. Then for some $\alpha_x, \beta_y  \in K$ we have: }  
\begin{equation}\label{preciseB} 
\begin{aligned} 
Q_n(h_w) =   (-1)^{L(w)}   &T_{I(w)} +  (-p)^{L(w)}   T_{J(w)} 
 \\ &+  T_{t_{n+1}}T_{\sigma_{n}}T_{t_{n+1}} \   (\sum_{\begin{smallmatrix}L(y)=L(w)-2\\ l(y)<l(I(w))-3 \end{smallmatrix}}  \beta_y T_y  )
+  \sum_{L(x)< L(I(w))}  \alpha_x T_x.
\end{aligned} 
\end{equation}    
\end{proof}

 We can conclude as in \cite[Theorem 6.4]{Sadek_2017}, with the same proof: 

\begin{theorem}\label{RB} 
The tower of affine Temperley-Lieb  algebras

\begin{eqnarray}
			 TL\tilde{B}_{2}(q) \stackrel{Q_{2}} {\longrightarrow} TL\tilde{B}_{3}(q) \longrightarrow \cdots  TL\tilde{B}_{n}(q)\stackrel{Q_{n}} {\longrightarrow}  TL\tilde{B}_{n+1}(q)\longrightarrow  \cdots  \nonumber\\\nonumber
		\end{eqnarray}
	
	is a tower of faithful arrows. 	\\  
\end{theorem}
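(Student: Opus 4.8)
The plan is to show that each arrow $Q_n\colon TL\tilde B_n(q)\to TL\tilde B_{n+1}(q)$ is injective by a leading-term argument that filters first by affine length and then by Coxeter length, exactly as in \cite[Theorem 6.4]{Sadek_2017}. Suppose for contradiction that there is a nontrivial relation $\sum_{w\in W^c(\tilde B_n)}\lambda_w\,Q_n(h_w)=0$, and set $m=\max\{L(w):\lambda_w\neq 0\}$. First I would isolate the part of affine length $m$. By Lemma \ref{formulaB} and Proposition \ref{formula2B}, every basis element $T_x$ occurring in $Q_n(h_w)$ has $L(x)\le L(w)$, and since $I$ and $J$ preserve affine length, the only contributions of affine length exactly $m$ come from those $w$ with $L(w)=m$. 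Among these I then set $a=\max\{l(I(w)):L(w)=m,\ \lambda_w\neq 0\}$, recalling that $l(I(w))=l(J(w))$ by Theorem \ref{IJB}.

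Next I would extract the terms of affine length $m$ and Coxeter length $a$. In both formulas the error terms of affine length $m$ all have Coxeter length strictly below $l(I(w))\le a$, so none of them reaches Coxeter length $a$; the same is true of the leading terms coming from $w$ with $l(I(w))<a$. Hence at this top ``bidegree'' $(m,a)$ only genuine leading terms survive: from each $w\in W^c_1(\tilde B_n)\sqcup W^c(D_n)$ with $L(w)=m$ and $l(I(w))=a$ the single term $\lambda_w\,p^{m}T_{I(w)}$ (where $I(w)=J(w)$), and from each $w\in W^c_2(\tilde B_n)$ with $L(w)=m$ and $l(I(w))=a$ the two terms $\lambda_w(-1)^{m}T_{I(w)}$ and $\lambda_w(-p)^{m}T_{J(w)}$.

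The crucial step is to verify that all the indices occurring here are pairwise distinct, so that, $\{T_x\}$ being a basis and $p=1/q$ being invertible, every such $\lambda_w$ is forced to vanish, contradicting the choice of $m$ and $a$. This is precisely where Theorem \ref{IJB} enters: $I$ and $J$ are injective, they carry first (resp.\ second) type elements to first (resp.\ second) type elements, and their images meet \emph{exactly} along $I(W^c_1(\tilde B_n)\sqcup W^c(D_n))$. Consequently a coincidence $I(w)=J(w')$ forces both $w,w'$ into $W^c_1(\tilde B_n)\sqcup W^c(D_n)$, whence $I(w)=J(w)$ and $w=w'$; meanwhile for a second type $w$ the indices $I(w)$ and $J(w)$ are distinct from one another, and, being second type elements, distinct from every first type or $W^c(D_{n+1})$ image appearing at the same bidegree.

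I expect this disjointness bookkeeping, rather than any computation, to be the only delicate point: all the genuine algebra has already been packaged into Lemma \ref{formulaB} and Proposition \ref{formula2B}, so what remains is the clean two-step maximality argument above. Since the relations governing $T_{\sigma_{n-1}}$, $T_{\sigma_n}$ and $T_{t_{n+1}}$ are identical to those in type $\tilde C$, the proof is verbatim that of \cite[Theorem 6.4]{Sadek_2017}.
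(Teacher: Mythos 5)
Your proposal is correct and takes essentially the same route as the paper, whose proof of Theorem \ref{RB} simply states that one concludes ``as in \cite[Theorem 6.4]{Sadek_2017}, with the same proof'': the two-step maximality argument (first on affine length, then on the Coxeter length of the images), combined with Lemma \ref{formulaB}, Proposition \ref{formula2B} and the injectivity and image-disjointness statements of Theorem \ref{IJB}, is precisely the intended argument. The disjointness bookkeeping you single out as the delicate point is handled exactly as you describe.
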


		
		\subsection{Faithfulness of the tower of $\tilde D$-type Temperley-Lieb algebras}\label{TLD}
		
			For $ n\geq 3$, the $\tilde{D}$-type Temperley-Lieb algebra with $n+2$ generators $TL\tilde{D}_{n+1}(q)$ is given by the   set of generators $\left\{ T_{\sigma_{\bar 1}}, T_{\sigma_{1}}, \dots T_{\sigma_{n}}, T_{\sigma_{\bar n}}\right\}$, with the defining relations:\\
	 	\begin{equation}\label{definingrelationsD} 	
	   \quad \left\{ \quad  \begin{aligned}
		 &T_{\sigma_{i}} T_{\sigma_{j}} =T_{\sigma_{j}} T_{\sigma_{i}}  \text{ for } 1 \leq i,j\leq n \text{ and } \left| i-j\right| \geq 2, \\
   &T_{\sigma_{\bar 1}}T_{\sigma_{\bar n}}  =T_{\sigma_{\bar n}} T_{\sigma_{\bar 1}}   \text{ and } T_{\sigma_{\bar 1}}T_{\sigma_{i}}  =T_{\sigma_{i}}  T_{\sigma_{\bar 1}}  \text{ for  }  1\leq i \leq n,  i\ne  2  ,\\
   &  T_{\sigma_{\bar n}}T_{\sigma_{i}}  =T_{\sigma_{i}}  T_{\sigma_{\bar n}}  \text{ for  }  1\leq i \leq n,  i\ne  n-1  ,\\
	          &T_{\sigma_{i}}T_{\sigma_{i+1}}T_{\sigma_{i}} = T_{\sigma_{i+1}}T_{\sigma_{i}}T_{\sigma_{i+1}}  \text{ for }  1\leq i\leq n-1,\\
    &T_{\sigma_{\bar 1}}T_{\sigma_{2}}T_{\sigma_{\bar 1}}  =T_{\sigma_{2}}  T_{\sigma_{\bar 1}} T_{\sigma_{2}}  \text{ and }  T_{\sigma_{\bar n}}T_{\sigma_{n-1}}T_{\sigma_{\bar n}}  =T_{\sigma_{n-1}}  T_{\sigma_{\bar n}} T_{\sigma_{n-1}} ,\\
			    &T^{2}  = (q-1)T  +q    \   \text{ for }  T \in \left\{ T_{\sigma_{\bar 1}}, T_{\sigma_{1}}, \dots , T_{\sigma_{n}}, T_{\sigma_{\bar n}}\right\},\\
&V(  T_{\sigma_{\bar 1}}, T_{\sigma_{2}} )=V(  T_{\sigma_{\bar n}}, T_{\sigma_{n-1}} ) = 0,  \\
						  &  V(T_{\sigma_{i}},T_{\sigma_{i+1}}) = 0  \text{  for }1\leq i\leq n-1     .
			  \end{aligned} \right.     \qquad 
		\end{equation}
 
 We set $TL\tilde{D}_{3}(q)= K$. In the following we denote by 
$h_w$, $w \in W^c(\tilde{D}_n)$, the basis elements of  $TL\tilde{D}_{n}(q)$ to distinguish them momentarily from those of $TL\tilde{D}_{n+1}(q)$.

		                     
                    \begin{lemma}\label{morphismFnD} 
The morphism of algebras $P_n: H\tilde{D}_n (q)   \longrightarrow   H\tilde{D}_{n+1} (q) $ defined 
in (\ref{defPn})   induces the following morphism of algebras, which we also denote by  $P_n$:	
			\begin{eqnarray}
				P_{n}: TL\tilde{D}_{n}(q) &\longrightarrow& TL\tilde{D}_{n+1}(q) \nonumber\\
				h_{\sigma_{i}} &\longmapsto & T_{\sigma_{i}} \  \   \text{ for }    \sigma_{i} \in \left\{  \sigma_{\bar 1} ,  \sigma_{1} , \dots  ,\sigma_{n-1}  \right\}, \nonumber\\
				h_{\sigma_{\bar {n-1}}} &\longmapsto & T_{\sigma_{n}} T_{\sigma_{n-1}} T_{\sigma_{\bar n}} T^{-1}_{\sigma_{n-1}} T^{-1}_{\sigma_{n}}. \nonumber
			\end{eqnarray}
The restriction of $P_n$ to $  TLD_{n }(q)$ is an injective morphism into $TLD_{n+1}(q)$  and satisfies $P_n(h_w)= g_{I(w)}= g_{J(w)}$  for  $w \in W^c( D_{n})$. 
                    \end{lemma}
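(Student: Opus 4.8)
The plan is to exploit that $P_n\colon H\tilde D_n(q)\to H\tilde D_{n+1}(q)$ is \emph{already} a well-defined morphism of algebras by~(\ref{defPn}) (it is induced by a morphism of braid groups, see Remark~\ref{braid}), and to show that it passes to the Temperley--Lieb quotients. Concretely, writing $\mathcal N_n$ and $\mathcal N_{n+1}$ for the two-sided Graham ideals cutting out $TL\tilde D_n(q)$ and $TL\tilde D_{n+1}(q)$, it suffices to prove $P_n(\mathcal N_n)\subseteq\mathcal N_{n+1}$. Since $P_n$ is a Hecke morphism it automatically respects every relation that already holds in $H\tilde D_n(q)$ --- the quadratic relations, the commutations, and the braid relations --- so the only defining relations of $TL\tilde D_n(q)$ requiring separate verification are the extra Temperley--Lieb relations, i.e. the images of the Graham generators $L_{s,t}=V(g_s,g_t)$ of $\mathcal N_n$ (all braid relations in type $\tilde D$ having length three).

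Each such generator is attached to an edge $\{s,t\}$ of the Coxeter diagram of $\tilde D_n$. For every edge not containing $\sigma_{\bar{n-1}}$ the two endpoints lie in $\{\sigma_1,\sigma_{\bar 1},\dots,\sigma_{n-1}\}$, on which $P_n$ is the identity, so its image is the corresponding defining relation $V(T_s,T_t)=0$ of $TL\tilde D_{n+1}(q)$ and hence lies in $\mathcal N_{n+1}$. The sole exception is the right--fork edge $\{\sigma_{n-2},\sigma_{\bar{n-1}}\}$, whose relation maps to $V(T_{\sigma_{n-2}},\tau)$, where $\tau:=T_{\sigma_n}T_{\sigma_{n-1}}T_{\sigma_{\bar n}}T_{\sigma_{n-1}}^{-1}T_{\sigma_n}^{-1}$ is the image of $\sigma_{\bar{n-1}}$. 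Thus everything reduces to the single identity $V(T_{\sigma_{n-2}},\tau)=0$ in $TL\tilde D_{n+1}(q)$.

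I expect this last identity to be the main obstacle. The key simplification is that it involves only the four generators $\sigma_{n-2},\sigma_{n-1},\sigma_n,\sigma_{\bar n}$, so the computation can be carried out inside the finite parabolic Temperley--Lieb subalgebra of type $D_4$ they generate, using only the quadratic, braid and $V$-relations among them. This is the exact analogue of the type-$\tilde C$ computation in \cite[Lemma~6.1]{Sadek_2017}, the only difference being that the single conjugation $t_n\mapsto T_{\sigma_n}T_{t_{n+1}}T_{\sigma_n}^{-1}$ is here replaced by the double conjugation defining $\tau$; the manipulations are otherwise identical, so I would reproduce them verbatim in that finite subalgebra and invoke \emph{loc.\ cit.} Once descent is established, the displayed formula on generators is immediate: one applies the canonical projection $H\tilde D_{n+1}(q)\to TL\tilde D_{n+1}(q)$ to~(\ref{defPn}), using that $g_s^{-1}$ maps to $T_s^{-1}$.

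It remains to treat the restriction to $TLD_n(q)$. Since $P_n$ is the identity on $\sigma_1,\sigma_{\bar 1},\dots,\sigma_{n-1}$, which generate the parabolic subgroup $W(D_n)$ of $W(D_{n+1})$, this restriction is precisely the morphism of Temperley--Lieb algebras induced by the parabolic inclusion $W(D_n)\hookrightarrow W(D_{n+1})$. A parabolic inclusion preserves full commutativity and is injective, so it embeds $W^c(D_n)$ into $W^c(D_{n+1})$; consequently the basis elements $h_w$ with $w\in W^c(D_n)$ are sent to pairwise distinct basis elements $T_w$ of $TLD_{n+1}(q)$, whose linear independence yields injectivity of the restriction. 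Finally, for such $w$ one has $I(w)=J(w)=w$ by Definition~\ref{defIJD}, whence $P_n(h_w)=T_w=T_{I(w)}=T_{J(w)}$, completing the proof.
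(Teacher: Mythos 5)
Your reduction is the right one, and it is exactly the skeleton of the paper's argument: since $P_n$ is already a morphism of Hecke algebras, only the Graham generators $V(h_s,h_t)$ need checking, all of them except the one attached to the fork edge $\{\sigma_{n-2},\sigma_{\bar{n-1}}\}$ map identically onto defining relations of $TL\tilde D_{n+1}(q)$, and the restriction/injectivity statements for $TLD_n(q)$ follow from the parabolic inclusion as you say. The problem is that the entire content of the lemma is the one identity $V\bigl(\tau,T_{\sigma_{n-2}}\bigr)=0$ with $\tau=T_{\sigma_n}T_{\sigma_{n-1}}T_{\sigma_{\bar n}}T_{\sigma_{n-1}}^{-1}T_{\sigma_n}^{-1}$, and you do not prove it: you defer to reproducing ``verbatim'' the computation of \cite[Lemma~6.1]{Sadek_2017}. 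That transfer does not go through as stated --- in type $\tilde C$ the relation to be killed is the degree-four polynomial $Z$ attached to the order-$4$ edge $\{\sigma_{n-1},t_n\}$ and the image of $t_n$ is a single conjugate of a generator by $T_{\sigma_n}$, whereas here one must kill $V$ attached to an order-$3$ edge and $\tau$ is a double conjugate --- so the manipulations are not ``otherwise identical'' and cannot simply be invoked.

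The missing step is short but has to be supplied; it is precisely what the paper's proof consists of. Set $c=T_{\sigma_n}T_{\sigma_{\bar n}}^{-1}$. The braid relation $T_{\sigma_{\bar n}}T_{\sigma_{n-1}}T_{\sigma_{\bar n}}=T_{\sigma_{n-1}}T_{\sigma_{\bar n}}T_{\sigma_{n-1}}$ gives $T_{\sigma_{n-1}}T_{\sigma_{\bar n}}T_{\sigma_{n-1}}^{-1}=T_{\sigma_{\bar n}}^{-1}T_{\sigma_{n-1}}T_{\sigma_{\bar n}}$, hence $\tau=c\,T_{\sigma_{n-1}}\,c^{-1}$; moreover $c$ commutes with $T_{\sigma_{n-2}}$ because $\sigma_{n-2}$ commutes with both $\sigma_n$ and $\sigma_{\bar n}$. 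Since $V$ is a noncommutative polynomial,
\begin{equation*}
V\bigl(\tau,T_{\sigma_{n-2}}\bigr)=c\,V\bigl(T_{\sigma_{n-1}},T_{\sigma_{n-2}}\bigr)\,c^{-1}=0
\end{equation*}
in $TL\tilde D_{n+1}(q)$. With this identity inserted, the rest of your argument is correct.
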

                    
                    \begin{proof}
                    
                    The lemma follows after noticing that
                    \begin{eqnarray}
                     V(P_n(h_{\sigma_{\bar {n-1}}}),P_n(h_{\sigma_{n-2}})) = (T_{\sigma_{n}} T^{-1}_{\sigma_{\bar n}})  V (T_{\sigma_{n-1}},T_{\sigma_{n-2}})(T_{\sigma_{n}} T^{-1}_{\sigma_{\bar n}})^{-1}.\nonumber
                    \end{eqnarray}
                    \end{proof}
		 
		 The aim of this section is to show, using  the normal form  of Theorem \ref{FCD},   that the morphism $P_n$ is an  injection.  
We  set $p=1/q$. We will use repeatedly \\
\begin{equation}\label{basicD} 
\begin{aligned}
 P_n(h_{\sigma_{\bar {n-1}}})  =  &T_{\sigma_{n}\sigma_{n-1}\sigma_{\bar n}} + p T_{\sigma_{n-1}\sigma_{\bar n}\sigma_{n}}+ pT_{\sigma_{n}\sigma_{\bar n}\sigma_{n-1}} +p^{2}T_{\sigma_{\bar n}\sigma_{n-1}\sigma_{n}}  \\ 
&+ pT_{\sigma_{n}\sigma_{n-1}} +p^{2}T_{\sigma_{n-1}\sigma_{n}} +pT_{\sigma_{n-1}\sigma_{\bar n}} +p^{2}T_{\sigma_{\bar n}\sigma_{n-1}} + (p^{2}+p) T_{\sigma_{\bar n}\sigma_{n}}  \\ 
&+p^{2}T_{\sigma_{n}} +p^{2}T_{\sigma_{n-1}} +p^{2}T_{\sigma_{\bar n}} + (p^{2}-p)  
\end{aligned}
\end{equation}
as well as those two rules that follow from   the defining relations (\ref{definingrelationsD}):  \\

\noindent 
(i) 
In $TL\tilde{D}_{n+1}(q)$, a product $T_w T_y$, $w, y \in W^c(\tilde{D}_{n+1})$,   is either equal to 
$T_{wy}$, if $l(wy)=l(w)+l(y)$ and $wy$ is fully commutative, or equal to a linear combination of terms $T_z$, $z\in W^c(\tilde{D}_{n+1})$,  with 
$L(z) \le L(w)+L(y)$  and $l(z) < l(w)+l(y)$. \\

\noindent 
(ii) 
 When   a braid $T_{\sigma_{i}}T_{\sigma_{i+1}}T_{\sigma_{i}}$ appears in a computation, the use of 
$V(T_{\sigma_{i}},T_{\sigma_{i+1}})=0$ replaces it by a sum of terms $T_z$  with  $l(z) =2$, $1$ or $0$, hence the length decreases.  \\

\begin{proposition}\label{formulaLge2D} 
Let $w \in W^c(\tilde D_{n}) $ be of affine length at least $2$. Then   for some $\alpha_x, \beta_y  \in K$ we have, 
if $w$ is a first type element: 
$$
P_n(h_w) =    p^{L(w)}   T_{I(w)} 
+   \sum_{\begin{smallmatrix}L(x)\le L(w)\\ l(x)<l(I(w))\end{smallmatrix}}  \alpha_x T_x  \qquad (\alpha_x \in K), 
$$
and if $w$ is a second type element: 
 $$
P_n(h_w) =     T_{I(w)} +   p^{2 L(w)}   T_{J(w)} +  \sum_{\begin{smallmatrix}L(y)=L(w)\\ l(y)<l(I(w))\end{smallmatrix}}  \beta_y T_y 
+  \sum_{L(x)< L(I(w))}  \alpha_x T_x.
$$
\end{proposition}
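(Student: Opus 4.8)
The plan is to apply the algebra morphism $P_n$ to the normal form of $w$ provided by Theorem \ref{FCD}, to replace each occurrence of $T_{\sigma_{\bar{n-1}}}$ by the explicit thirteen--term expansion (\ref{basicD}), to multiply everything out in $TL\tilde D_{n+1}(q)$ using rules (i) and (ii), and then to isolate the terms of maximal affine and Coxeter length. This follows the pattern of \cite[Proposition 6.3]{Sadek_2017}; the only genuinely new feature is that (\ref{basicD}) has thirteen terms instead of two, so the bookkeeping at the junctions between consecutive affine letters is heavier.

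First I would record two uniform bounds. Every one of the thirteen terms of (\ref{basicD}) contains the letter $\sigma_{\bar n}$ at most once; since $w$ has exactly $L(w)$ occurrences of $\sigma_{\bar{n-1}}$ while the other generators of its normal form are fixed by $P_n$, every basis element $T_z$ appearing in $P_n(h_w)$ satisfies $L(z)\le L(w)$, which is the affine--length constraint in both displays. Moreover $L(w)=L(I(w))=L(J(w))$ by Theorem \ref{IJD}, and selecting a $\sigma_{\bar n}$--free term of (\ref{basicD}) at some position strictly lowers the affine length; such contributions therefore fall into the sum indexed by $L(x)<L(I(w))$. Likewise the largest attainable Coxeter length is $l(w)+2L(w)$, the common value of $l(I(w))$ and $l(J(w))$ given by Theorem \ref{IJD}: to reach it one must pick a length--three term of (\ref{basicD}) at each of the $L(w)$ positions and incur no braid reduction via rule (ii).

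Then I would treat the two cases. For a \emph{first type} $w$, Theorem \ref{IJD} shows that the global choice matching $I$ (namely $T_{\sigma_{n-1}\sigma_{\bar n}\sigma_n}$ at each interior position, with the leftmost position replaced by $T_{\sigma_n\sigma_{\bar n}\sigma_{n-1}}$ when $\epsilon=1$, both of coefficient $p$) produces the reduced, fully commutative word $I(w)$ of length $l(w)+2L(w)$; any of the other length--three terms of (\ref{basicD}), selected at some position, creates at the junction a braid $\sigma_n\sigma_{n-1}\sigma_n$ or $\sigma_{n-1}\sigma_{\bar n}\sigma_{n-1}$, which by rule (ii) strictly lowers the length. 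Hence the top term is $p^{L(w)}T_{I(w)}$ and all else is of strictly smaller Coxeter length, as claimed. For a \emph{second type} $w$, the two uniform choices $T_{\sigma_n\sigma_{n-1}\sigma_{\bar n}}$ (coefficient $1$, producing $I(w)$) and $T_{\sigma_{\bar n}\sigma_{n-1}\sigma_n}$ (coefficient $p^2$, producing $J(w)$) are length--preserving by Theorem \ref{IJD}, giving the two leading terms $T_{I(w)}+p^{2L(w)}T_{J(w)}$; any mixed selection, or either of the two middle length--three terms, produces a braid at some interface and collapses under rule (ii), landing among the $\beta_y T_y$ with $l(y)<l(I(w))$ when the affine length is preserved, and among the $\alpha_x T_x$ otherwise. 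Since $\{T_v : v\in W^c(\tilde D_{n+1})\}$ is a basis, $T_{I(w)}$ and $T_{J(w)}$ are honest basis elements and no further cancellation can occur.

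The main obstacle is precisely this junction analysis: showing that among the roughly $4^{L(w)}$ ways of choosing a length--three term at each position, only the one (first type) or two (second type) uniform choices avoid a braid and thereby retain Coxeter length $l(w)+2L(w)$. I would isolate a single interface of the form $\cdots(\text{boundary letter }\sigma_n\text{ or }\sigma_{\bar n})\,(\Sigma\text{--block})\,(\text{next boundary letter})\cdots$ and check, using that both $\sigma_n$ and $\sigma_{\bar n}$ commute with all of $\Sigma$ except $\sigma_{n-1}$, that any non--matching pair of choices forces a $\sigma_{n-1}$ into a forbidden braid. The full commutativity and reducedness of $I(w)$ and $J(w)$ from Theorem \ref{IJD} then guarantee that the admissible global choices genuinely stay reduced, so the displayed coefficients are exact.
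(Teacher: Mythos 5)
Your proposal is correct and follows essentially the same route as the paper: expand each $P_n(h_{\sigma_{\bar{n-1}}})$ via (\ref{basicD}), discard the $\sigma_{\bar n}$-free and length-$<3$ terms when hunting for the leading contributions, and then run a junction-by-junction analysis (the paper does this by tabulating the products $[i]\,T_{\langle i_t,n-1]\langle j_t,n-2]}\,[j']$ against list (\ref{listD}) and retaining only the six admissible combinations, which forces exactly the uniform choices you describe, including the $\epsilon=1$ exception at the leftmost position for first type).
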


\begin{proof} We go back to Lemma \ref{lemmafullD} and write $w$ as in (\ref{formeD}): 
$$
w =\langle i_1,n-1]  \langle j_1,n-2] \sigma_{\bar{n-1}}   \dots  \langle i_m,n-1]  \langle j_m,n-2] \sigma_{\bar{n-1}} v_{m+1}   
$$ 
with the notation there, in particular, for $2 \le t \le m$, the element  
$\langle i_t,n-1]  \langle j_t,n-2] $ belongs to    list (\ref{listD}): 
$$
 \begin{aligned}	    
     &(\mathbf a) \quad   \langle n  ,n-1]  \langle -(n-2) ,n-2] = \langle -(n-2), n-2]  ,   \nonumber\\
				&(\mathbf b) \quad  \langle n-1, n-1]  \langle -(n-2), n-2] = \sigma_{n-1} 	 \langle -(n-2), n-2], \nonumber\\  			
 		 &(\mathbf c) \quad  \langle i ,n-1]  \langle j ,n-2]   \text{ with }  2   \le  i \le n-2   \text{ and }    |j| < i, \nonumber\\ 
 		 &(\mathbf d) \quad  \langle i ,n-1]  \langle j ,n-2]   \text{ with }  i = -j = \pm 1 . \nonumber
 \end{aligned}
$$
We examine the subword  $x=  \sigma_{\bar{n-1}} \langle i_t,n-1]  \langle j_t,n-2] \sigma_{\bar{n-1}}$. When developing 
$P_n(h_w)$ we will   develop  $P_n(h_x)$ as 
$ P_n(h_x) =  P_n(h_{\sigma_{\bar {n-1}}})  T_{\langle i_t,n-1]  \langle j_t,n-2]} P_n(h_{\sigma_{\bar {n-1}}})  $ 
where $P_n(h_{\sigma_{\bar {n-1}}}) $ is given by (\ref{basicD}). We are interested in terms of maximal affine length $L(w)$ and maximal Coxeter length $l(w)$, so we keep only in (\ref{basicD}) the  four terms of Coxeter length $3$. We have to check the following products:

 \begin{figure}[ht]
				\centering
				\begin{tikzpicture}
               \begin{scope}[xscale = 1]

  \node at (-4.5,1.5)  {$[1]~  \ \     T_{\sigma_{n}\sigma_{n-1}\sigma_{\bar n}} $};
  \node at (-4.5,0.5)  {$[2]~p  \    T_{\sigma_{n-1}\sigma_{\bar n}\sigma_{n}}$};
  \node at (-4.5,-0.5)  {$[3]~p  \     T_{\sigma_{n}\sigma_{\bar n}\sigma_{n-1}}$};
  \node at (-4.5,-1.5)  {$[4]~p^{2} \    T_{\sigma_{\bar n}\sigma_{n-1}\sigma_{n}}$};

\draw (-1.5,0)  -- (-2.8,-1.4);
\draw (-1.5,0)  -- (-2.8,-0.4);
\draw (-1.5,0)  -- (-2.8,0.4);
\draw (-1.5,0)  -- (-2.8,1.4);
	
  \node at (0,0)  {$  T_{\langle i_t,n-1]  \langle j_t,n-2]} $};

\draw (1.5,0)  -- (2.8,-1.4);
\draw (1.5,0)  -- (2.8,-0.4);
\draw (1.5,0)  -- (2.8,0.4);
\draw (1.5,0)  -- (2.8,1.4);

  \node at (4.5,1.5)  {$[1']~ \  \   T_{\sigma_{n}\sigma_{n-1}\sigma_{\bar n}} $  };
  \node at (4.5,0.5)  {$[2']~p \  T_{\sigma_{n-1}\sigma_{\bar n}\sigma_{n}}$  };
  \node at (4.5,-0.5)  {$[3']~p \     T_{\sigma_{n}\sigma_{\bar n}\sigma_{n-1}}$  };
  \node at (4.5,-1.5)  {$[4']~p^{2} \   T_{\sigma_{\bar n}\sigma_{n-1}\sigma_{n}}$  };

\end{scope}
               \end{tikzpicture}
							
			\end{figure}	
Each time a resulting subword is either non reduced or non fully commutative (i.e. containing a braid), it will contribute  only to terms of strictly shorter length. We thus eliminate by inspection all possible combinations except the following : 
1c1', 1d1', 2b2', 3a2', 4c4', 4d4'. 
 
By definition, if $w$ is a second type element, then our middle term has form (c) or (d) and we will get the maximal length elements by replacing the occurrences of   $P_n(h_{\sigma_{\bar {n-1}}})$
either all  by $ T_{\sigma_{n}\sigma_{n-1}\sigma_{\bar n}} $, giving rise to a term $T_{I(w)}$, or all by 
$p^{2}   T_{\sigma_{\bar n}\sigma_{n-1}\sigma_{n}}$, giving rise to a term $p^{2L(w)} T_{J(w)}$.

If $w$ is a first type element, a middle term has form (a) or (b),  but form (a) can only occur if $t=2$ by Lemma \ref{fullandsigmaD}. We get the maximal length elements by replacing all occurrences of   $P_n(h_{\sigma_{\bar {n-1}}})$
   by $ p       T_{\sigma_{n-1}\sigma_{\bar n}\sigma_{n}} $, except, if form (a) occurs (for $t=2$), the leftmost one, that must be replaced by $p     T_{\sigma_{n}\sigma_{\bar n}\sigma_{n-1}}$. We indeed get 
$p^{ L(w)} T_{I(w)}$ as the only leading term. 
\end{proof} 
 
	\medskip
\begin{lemma}\label{formulaD} 
Let $w \in W^c(\tilde D_{n}) $ be 
of affine length   $1$  and write 
as in (\ref{formefinalelongueur1D}): 
$$w=  \langle i,n-1]  \langle j,n-2] \sigma_{\bar{n-1}}   \    v   \quad ( v \in   W^c(D_{n})) .$$ 
Let  $\nu=1$ if $i=n$ and $\nu=0$ otherwise. If $i=n$, or if $i < n$ and $ v \notin   W^c(D_{n-1}) $, 
 we have: 
$$
P_n(h_w) =    p^\nu  T_{I(w)} + \sum_{\begin{smallmatrix}L(x)=1\\ l(x)=l(I(w))\\ 
x \notin \text{ Im } I \end{smallmatrix}} 
 \alpha_x T_x 
+   \sum_{\begin{smallmatrix}L(x)\le 1\\ l(x)<l(I(w))\end{smallmatrix}}  \alpha_x T_x  \qquad (\alpha_x \in K). 
$$

If $i < n$ and $ v \in   W^c(D_{n-1}) $, we have: 
$$
P_n(h_w) =      T_{I(w)} + p T_{I(\bar w)} + \sum_{\begin{smallmatrix}L(x)=1\\ l(x)=l(I(w))\\ 
x \notin \text{ Im } I \end{smallmatrix}} 
 \alpha_x T_x 
+   \sum_{\begin{smallmatrix}L(x)\le 1\\ l(x)<l(I(w))\end{smallmatrix}}  \alpha_x T_x  \qquad (\alpha_x \in K)  
$$
where $\bar w =  \langle i,n-2 ] \sigma_{\bar {n-1}}  \langle j,n-1]    v  $.     
\end{lemma}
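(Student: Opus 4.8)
The plan is to expand $P_n(h_w)$ and keep track only of the terms of maximal Coxeter length. Writing $w$ in its affine-length-one normal form (\ref{formefinalelongueur1D}), full commutativity lets me factor, inside $TL\tilde D_n(q)$,
$$h_w = h_{\langle i,n-1]\langle j,n-2]}\; h_{\sigma_{\bar{n-1}}}\; h_v ,$$
where $\langle i,n-1]$, $\langle j,n-2]$ and $v$ involve only generators $\sigma_{\bar 1},\sigma_1,\dots,\sigma_{n-1}$, all fixed by $P_n$. Applying the algebra morphism $P_n$ and substituting formula (\ref{basicD}) gives $P_n(h_w)=T_A\,P_n(h_{\sigma_{\bar{n-1}}})\,T_v$ with $A=\langle i,n-1]\langle j,n-2]$. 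Since $L(w)=1$, Theorem \ref{IJD} yields $l(I(w))=l(w)+2$, and as $l(w)=l(A)+1+l(v)$, a monomial $c\,T_A\,\tau\,T_v$ with $\tau$ of Coxeter length $\ell$ produces only basis elements of length $\le l(A)+\ell+l(v)=l(w)-1+\ell$. Hence reaching $l(I(w))$ forces $\ell=3$, so only the four length-three monomials $T_{\sigma_n\sigma_{n-1}\sigma_{\bar n}}$, $pT_{\sigma_{n-1}\sigma_{\bar n}\sigma_n}$, $pT_{\sigma_n\sigma_{\bar n}\sigma_{n-1}}$, $p^2T_{\sigma_{\bar n}\sigma_{n-1}\sigma_n}$ of (\ref{basicD}) can contribute a $T_z$ of top length; by rules (i) and (ii) all remaining monomials feed into $\sum_{l(x)<l(I(w))}\alpha_x T_x$.

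Next I would compute the four triple products $T_A\,\tau\,T_v$, using repeatedly that $\sigma_n$ and $\sigma_{\bar n}$ commute with every $\sigma_s$ of index $s\le n-2$, together with the identities $\langle i,n-1]\sigma_n=\langle i,n]$ and $\langle j,n-2]\sigma_{n-1}=\langle j,n-1]$. For the first monomial one gets $T_A\,\sigma_n\sigma_{n-1}\sigma_{\bar n}\,T_v$ acting on the word $\langle i,n]\langle j,n-1]\sigma_{\bar n}\,v$; when $i<n$ this is precisely the reduced, fully commutative expression $I(w)$ of Definition \ref{defIJD}, giving the leading term $p^{0}T_{I(w)}$, whereas when $i=n$ the block $\langle n,n-1]$ is trivial and this monomial yields a length-$l(I(w))$ element lying \emph{outside} $\operatorname{Im} I$. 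Symmetrically, when $i=n$ the monomial $pT_{\sigma_{n-1}\sigma_{\bar n}\sigma_n}$ produces $\langle j,n-1]\sigma_{\bar n}\sigma_n\,v=I(w)$, giving the leading coefficient $p=p^{\nu}$ with $\nu=1$. In both situations the coefficient of $T_{I(w)}$ is thus $p^{\nu}$, and for the remaining length-three monomials I would verify case by case that the associated word either carries a braid ($\sigma_{n-1}\sigma_n\sigma_{n-1}$, $\sigma_{n-1}\sigma_{\bar n}\sigma_{n-1}$ or $\sigma_n\sigma_{n-1}\sigma_n$) or is non-reduced, so that rule (ii) lowers its length, or else is reduced and fully commutative but equal to some $T_x$ with $x\notin\operatorname{Im} I$ of length $l(I(w))$; these account for the middle sum in the first displayed formula.

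The only genuinely extra leading contribution comes from the third monomial $pT_{\sigma_n\sigma_{\bar n}\sigma_{n-1}}$ in the case $i<n$. There $T_A\,\sigma_n\sigma_{\bar n}\sigma_{n-1}\,T_v$ acts on $\langle i,n]\sigma_{\bar n}\langle j,n-1]\,v$, and using $\sigma_n\sigma_{\bar n}=\sigma_{\bar n}\sigma_n$ together with $\langle i,n]=\langle i,n-1]\sigma_n$ one recognizes this word as $I(\bar w)$, where $\bar w=\langle i,n-2]\sigma_{\bar{n-1}}\langle j,n-1]\,v$ has normal form with $i$-parameter $n$, so that the defining substitution for $I$ applies verbatim. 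The decisive point, and the main obstacle, is to show that this word is reduced and fully commutative \emph{exactly} when $v\in W^c(D_{n-1})$: if $\sigma_{n-1}$ occurs in $v$, the $\sigma_{n-1}$ produced at the right of $\langle j,n-1]$ collides with the leftmost descent of $v$, creating a repeated generator or a braid and, by rule (ii), dropping the length below $l(I(w))$; if $\sigma_{n-1}\notin v$, that is $v\in W^c(D_{n-1})$, no such collision occurs and the term survives as $p\,T_{I(\bar w)}$. Collecting the leading terms with the two lower-order sums then produces the first formula when $i=n$, or when $i<n$ and $v\notin W^c(D_{n-1})$, and the second formula when $i<n$ and $v\in W^c(D_{n-1})$; injectivity of $I$ (Theorem \ref{IJD}) guarantees that $I(w)$ and $I(\bar w)$ are distinct basis elements, ruling out any cancellation among the leading terms.
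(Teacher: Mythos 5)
Your proposal follows the paper's proof essentially step for step: factor $P_n(h_w)=T_{\langle i,n-1]\langle j,n-2]}\,P_n(h_{\sigma_{\bar{n-1}}})\,T_v$, expand via (\ref{basicD}) keeping only the four length-three monomials, identify $T_{\sigma_{n}\sigma_{n-1}\sigma_{\bar n}}$ (resp.\ $p\,T_{\sigma_{n-1}\sigma_{\bar n}\sigma_{n}}$) as the source of $p^{\nu}T_{I(w)}$ when $i<n$ (resp.\ $i=n$), and single out $p\,T_{\sigma_{n}\sigma_{\bar n}\sigma_{n-1}}$ as producing $p\,T_{I(\bar w)}$ exactly when $v\in W^c(D_{n-1})$ --- all of which matches the paper's computation. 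The one step you leave as a deferred case-check, namely that every other top-length word lies outside $\operatorname{Im} I$, is handled in the paper by first recording the explicit parameter description of $\operatorname{Im} I$ (the constraints on $k'$ and $l'$ extracted from the proof of Theorem \ref{IJD}) and verifying that each offending word violates one of them; you will need that description (or an equivalent one) to close your case analysis, since merely observing that a word is reduced and fully commutative of length $l(I(w))$ does not by itself place it outside the image of $I$.
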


\begin{proof} We recall, 
from the proof of Theorem   
\ref{IJD}, that 
the image of $I$ consists of elements
\begin{equation}\label{imageI} 
 \langle k',n ]  \langle l',n-1] \sigma_{\bar{n }}   \    u'    \quad \text{   with  } 
 \left\{
\begin{matrix}
&\text{either }   k'=n+1, l'=l, u'=\sigma_{n} u  \text{ if } k=n , 
\\ 
&\text{ or }   k'=k, l'=l, u'=  u  \text{ if } k<n , 
\end{matrix} 
\right. 
\end{equation}
where 
$(k, l, u) $ are the parameters that determine uniquely a fully commutative element  of affine length $1$ and form (\ref{formefinalelongueur1D}).  In particular elements in the image of $I$ satisfy: 
$$\left\{\begin{aligned}
 &-(n-1) \le k' \le n-1 \text{ or }   k'=n+1 ,  \\
&-(n-2) \le l' \le n-1 .
\end{aligned}\right.
$$ 

Let   $w$ have affine length $1$ and write 
as in (\ref{formefinalelongueur1D}): 
$w=  \langle i,n-1]  \langle j,n-2] \sigma_{\bar{n-1}}   \    v  $ with $ v \in   W^c(D_{n}) $. 
We have  $$P_n(h_w)=  T_{\langle i ,n-1]  \langle j ,n-2]} P_n(h_{\sigma_{\bar {n-1}}}) T_v $$ 
where $P_n(h_{\sigma_{\bar {n-1}}}) $ is given by (\ref{basicD}),  from which we keep the four terms of Coxeter  length $3$. As previously we have to check the  products in the figure below to identify   
$ T_{I(w)}$ or $  p   T_{I(w)}$,  and  possible  terms among the others  of the form 
$\lambda T_x$, $\lambda \in K$ and $l(x)=l(I(w))$, and  study whether   $x$   belongs to the image of $I$. 

 \begin{figure}[ht]
				\centering
				\begin{tikzpicture}
               \begin{scope}[xscale = 1]

  \node at (-4.5,0)  {$  T_{\langle i,n-1]  \langle j,n-2]} $};

\draw (-3,0)  -- (-1.7,-1.4);
\draw (-3,0)  -- (-1.7,-0.4);
\draw (-3,0)  -- (-1.7,0.4);
\draw (-3,0)  -- (-1.7,1.4);

  \node at (0,1.5)  {$[1]~  \ \     T_{\sigma_{n}\sigma_{n-1}\sigma_{\bar n}} $};
  \node at (0,0.5)  {$[2]~p  \    T_{\sigma_{n-1}\sigma_{\bar n}\sigma_{n}}$};
  \node at (0,-0.5)  {$[3]~p  \     T_{\sigma_{n}\sigma_{\bar n}\sigma_{n-1}}$};
  \node at (0,-1.5)  {$[4]~p^{2} \    T_{\sigma_{\bar n}\sigma_{n-1}\sigma_{n}}$};

\draw (3,0)  -- (1.7,-1.4);
\draw (3,0)  -- (1.7,-0.4);
\draw (3,0)  -- (1.7,0.4);
\draw (3,0)  -- (1.7,1.4);
	
  \node at (3.5,0)  {$  T_v $};

\end{scope}
               \end{tikzpicture}
							
			\end{figure}	
We write the resulting products in the form  $\langle k',n ]  \langle l',n-1] \sigma_{\bar{n }}   \    u'   $ and abbreviate fully commutative as fc. 
\begin{enumerate}
\item  If  $i=n$,   term [2] does   provide a $ p T_{I(w)}$. The other three terms may provide some $T_x$ 
with $l(x)=l(I(w))$ (for 
  instance,  if $j = n-1$ and $v=1$, the four terms have this property: this is  (\ref{basicD})), but we will show that  such $x$ do not belong  to the image of $I$. 

For term [1], we  observe that 
$\langle j,n-2] \sigma_{n}\sigma_{n-1}\sigma_{\bar n} v =  \langle n,n ] \langle j,n-1]   \sigma_{\bar n} v  $ 
 cannot  belong to the image of $I$ since $k'=n$, impossible from the remark following (\ref{imageI}). 
Similarly 
term [3] would correspond to 
$\langle j,n-2] \sigma_{n}\sigma_{\bar n} \sigma_{n-1} v = \sigma_{\bar n}  \sigma_{n} \langle j,n-1]     v  $
that 
 cannot  belong to the image of $I$ since $l'=n$, and 
  term [4]   would correspond to 
$\langle j,n-2]\sigma_{\bar n} \sigma_{n-1} \sigma_{n} v = \sigma_{\bar n}  \langle j,n ]      v  $ that 
 cannot  belong to the image of $I$ for the same reason.    
The claim is proved in case $i=n$. \\

\item  If  $i< n$,    term   [1]     provides a $   T_{I(w)}$. Again we must check   the other three terms. 

Term [2] with $j\ne -(n-2) $   gives the braid  $\sigma_{n-1} \sigma_{n-2}\sigma_{n-1}$, while for 
$j = -(n-2) $ (hence $i=n-1$) we get
$\langle -(n-1),n-1]  \sigma_{\bar n} \sigma_{n} v      $ which is not in the image of $I$ because 
$l'= -(n-1)$.

Term [4] gives 
$\langle i,n-1] \langle j,n-2]\sigma_{\bar n} \sigma_{n-1} \sigma_{n} v   = 
\langle i,n-1] \sigma_{\bar n}  \langle j,n ]    v    $, so $k'=n+1$, $l'=i$, $u'=\langle j,n ]    v $. 
For this element to be in the image of $I$ and of maximal length it is necessary that   $u'=\langle j,n ]    v $ could be written as 
$\sigma_{n} u$,  reduced form of a fc element, with $\langle i,n-2] \sigma_{\bar{n-1}}  u    $ satisfying the conditions in  (\ref{formefinalelongueur1D}). This is impossible because $j < n$.

Finally term [3]  gives 
$   \langle i,n-1 ] \langle j,n-2]  \sigma_{n} \sigma_{\bar n} \sigma_{n-1} v=  \langle i,n-1 ] \sigma_{\bar n}\sigma_{n} \langle j,n-1]    v  $, 
so $k'=n+1$, $l'=i$, $u'=\sigma_{n} \langle j,n-1]    v $. This element might be the image under $I$ of $ \langle i,n-2 ] \sigma_{\bar {n-1}}  \langle j,n-1]    v  $. 
Here $v$ belongs to $W^c(D_n)$ and we see directly from (\ref{formefinalelongueur1D}) and Definition 
\ref{Bex} that if $\sigma_{n-1}$ appears in $v$, then $\langle j,n-1]    v  $ cannot be reduced fc since $j  < n-1$. 
On the other hand, if $v$ belongs to $W^c(D_{n-1})$,   conditions  (\ref{formefinalelongueur1D}) for $w$ 
imply conditions  (\ref{formefinalelongueur1D}) for $ \bar w =\langle i,n-2 ] \sigma_{\bar {n-1}}  \langle j,n-1]    v  $, therefore in this case the element  $p  T_{I(\bar w)}$, where  $I(\bar w)=  \langle i,n-1 ] \sigma_{\bar n}\sigma_{n} \langle j,n-1]    v  $   satisfies  
  $l(I(\bar w))=l(I(w))$, appears in $R_n(h_w)$. 
 \end{enumerate}
The lemma follows.
  \end{proof}

 \medskip

\begin{theorem}\label{RD} 
The tower of affine Temperley-Lieb  algebras

\begin{eqnarray}
			  TL\tilde{D}_{3}(q)  \stackrel{P_{3}}{\longrightarrow}  TL\tilde{D}_{4}(q) \stackrel{P_{4}} {\longrightarrow}  \cdots  TL\tilde{D}_{n}(q)\stackrel{P_{n}} {\longrightarrow}  TL\tilde{D}_{n+1}(q)\longrightarrow  \cdots  \nonumber\\\nonumber
		\end{eqnarray}
	
	is a tower of faithful arrows. 	\\  
\end{theorem}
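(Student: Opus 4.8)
The plan is to repeat the leading-term argument used for Theorem~\ref{RB} and for \cite[Theorem 6.4]{Sadek_2017}, now fed by Proposition~\ref{formulaLge2D} and Lemma~\ref{formulaD} in place of Lemma~\ref{formulaB} and Proposition~\ref{formula2B}. So I would fix $n$, assume a nontrivial relation $\sum_{w}\lambda_w\,P_n(h_w)=0$ with $w$ running over $W^c(\tilde D_n)$, and expand the left-hand side on the basis $\{T_x\}_{x\in W^c(\tilde D_{n+1})}$, filtering first by affine length and then by Coxeter length.

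Set $m=\max\{L(w):\lambda_w\neq 0\}$. Every $T_x$ occurring in any $P_n(h_w)$ has $L(x)\le L(w)$, by Proposition~\ref{formulaLge2D} and Lemma~\ref{formulaD} together with the fact that $L\equiv 0$ on $W^c(D_n)$ (Lemma~\ref{morphismFnD}); hence the affine-length-$m$ part of the relation comes only from the $w$ with $L(w)=m$, and among those the terms of top Coxeter length $l(w)+2m$ are, by Theorem~\ref{IJD}, indexed by $I(w)$ and, when $w$ is of second type, also by $J(w)$. I first eliminate the second-type elements: by Proposition~\ref{formulaLge2D} the coefficient of $T_{J(w)}$ is $p^{2m}\lambda_w$, and $J(w)$, being a second-type element of $\operatorname{Im} J$ that does not lie in $\operatorname{Im} I$ (Theorem~\ref{IJD}), receives no other contribution; thus $\lambda_w=0$ for all second-type $w$ with $L(w)=m$. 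When $m\ge 2$ the surviving $w$ of affine length $m$ are of first type, each giving the single leading term $p^{m}T_{I(w)}$ with $I$ injective, so their coefficients vanish too, and the usual descent on the pair $(L,l)$ finishes this range.

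The one genuinely new point, and the main obstacle, is $m=1$, where every surviving $w$ has affine length one and Lemma~\ref{formulaD} yields, for some of them, two top terms $T_{I(w)}+p\,T_{I(\bar w)}$ rather than one. Here I would use the dichotomy of Lemma~\ref{formulaD}: an element $w$ of the cross-term alternative (that is, $i<n$ and $v\in W^c(D_{n-1})$) feeds its second term into $I(\bar w)$ with $\bar w=\langle i,n-2]\,\sigma_{\bar{n-1}}\,\langle j,n-1]\,v$, and the key observation is that $\bar w$ always falls into the \emph{other} alternative, whose expansion has the single leading term $p^{\nu}T_{I(\bar w)}$; moreover $l(\bar w)=l(w)$, so $I(\bar w)$ does sit at the top Coxeter length. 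Consequently no cyclic coupling of leading coefficients can occur: $I(w)=I(\bar{w'})$ would force $w=\bar{w'}$ by injectivity of $I$, which is impossible since $w$ and $\bar{w'}$ belong to different alternatives of Lemma~\ref{formulaD}. Hence the coefficient of $T_{I(w)}$ for a cross-term element $w$ is exactly $\lambda_w$, forcing $\lambda_w=0$; once these vanish the cross terms disappear and the coefficients $p^{\nu}\lambda_w$ of the remaining (single-leading-term) elements vanish in turn. This triangularity is precisely what lets the argument go through over an arbitrary ground ring $K$ in which $q$ is invertible, with no determinant hypothesis. The only step needing a short normal-form computation is the verification that $\bar w$ lands in the opposite alternative of Lemma~\ref{formulaD}; granting it, the descent on $(L,l)$ yields $\lambda_w=0$ for every $w$, i.e.\ each $P_n$ is injective.
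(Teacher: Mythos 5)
Your proposal is correct and follows essentially the same route as the paper: the same descent on affine length and then Coxeter length, Proposition \ref{formulaLge2D} handling $m\ge 2$, and for $m=1$ the same key point that the cross term $I(\bar w)$ from Lemma \ref{formulaD} always lands in the part of $\operatorname{Im} I$ coming from the opposite alternative (the paper phrases this as the disjoint decomposition $A\sqcup B$ of (\ref{imageI})), so that the system of leading coefficients is triangular and injectivity of $I$ and $J$ finishes the argument.
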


\begin{proof} We need to show that $P_n$ is an injective homomorphism of algebras. 
A basis for $TL\tilde{D}_{n}(q)$ is given by the elements $h_w$ where $w$ runs over 
$ W^c(\tilde D_{n })$. Assume there are non trivial dependence relations between the images of these basis elements. Pick one such relation, say 
\begin{equation}\label{eqn:drD}
\sum_w \lambda_w P_n(h_w) =0 , 
\end{equation}
 and let 
$m= \max \{ L(w)   \  | \ 
w \in W^c(\tilde D_{n }) \text{ and } \lambda_w \ne 0 \}$.

If $m=0$ or $m\ge 2$ the proof goes as in \cite[Theorem 6.4]{Sadek_2017}, using respectively Lemma \ref{morphismFnD} 
and Proposition \ref{formulaLge2D}. 

 If $m=1$ we let:
$$ \begin{aligned} X &= \{   w \in W^c(\tilde D_{n })/ L(w)=1 \text{ and } \lambda_w \ne 0 \} ,  \\ 
 l &= \max  \{ l(w) / w \in X \} , \\ 
 X_0 &=    \{   w \in X / l(w)=l \}.
 \end{aligned} 
$$
 We 
 use Lemma  
 \ref{formulaD} to develop  the terms in    (\ref{eqn:drD}) with $\lambda_w \ne 0 $. 
We obtain terms $T_z$ where $z$ has affine length $0$ or $1$ 
and we want to set apart those $T_z$ where $z$ has affine length   $1$ and maximal Coxeter length. They are 
exactly the terms $T_z$ where $L(z)=1$ and $l(z)= l+2$  in the development, using  Lemma  
 \ref{formulaD}, of the  sum 
$$\sum_{w \in X_0}  \lambda_w P_n(h_w).    
$$
Among those terms we have those $T_z$ with $z \in \text{Im }I$ and those $T_z$ with $z \notin \text{Im }I$, and the spaces they generate are in direct sum. So the following sum must be equal to $0$: 
\begin{equation}\label{eqn:last}
\sum_{w \in X_0}  \lambda_w  \left( p^{\nu_w}  T_{I(w)} + \delta_w \  p  T_{I(\bar w)} \right)
\end{equation}
where we have set, with the notation in  Lemma  
 \ref{formulaD}:  $\delta_w = 1$ if  $i < n$ and $ v \in   W^c(D_{n-1}) $, and $\delta_w = 0$ (and $\bar w=w $) 
otherwise. 

If this sum does not contain any $w$ with  $\delta_w = 1$, the injectivity of $I$ shows directly that  $
 \lambda_w =0$ for $w \in X_0$, a contradiction. 
Otherwise we recall from 
(\ref{imageI})  that the image by $I$ of terms of affine length $1$ is the disjoint union of two subsets, one is the image $A$ of those terms with $i=n$, the other  the image $B$ of those terms with $i<n$. We notice that 
if  $\delta_w = 1$, then $I(w) \in B$ and   $I(\bar w) \in A$. The linear combination of the terms $T_x$ with 
$x \in B$ in (\ref{eqn:last}) has to be zero, and it is equal to $$ 
\sum_{w \in X_0, I(w) \in B}  \lambda_w   T_{I(w)},   
$$   so the coefficients $
 \lambda_w  $, for $w \in X_0$ and $\delta_w = 1$, must be $0$, again a contradiction.  
\end{proof} 

\bigskip 

 \clearpage

\appendix


\section{Elements in $W^{c}(D_{4} )$}\label{exempleD4}

  We   list the elements in $W^{c}(D_{4} )$ given in their normal form.  $W^{c}(D_{4})$ has $48$ elements, see \cite[Proposition 10.4 and Appendix Table 2]{St}:\\

	 $1, \sigma_{1},\sigma_{2}, \sigma_{3}, \sigma_{1} \sigma_{2},  \sigma_{2}\sigma_{1},   \sigma_{2}\sigma_{3}, \sigma_{3}\sigma_{2},  \sigma_{3}\sigma_{1}, \sigma_{3}\sigma_{1}\sigma_{2}, \sigma_{3}\sigma_{2}\sigma_{1}, \sigma_{2}\sigma_{3}\sigma_{1}, \sigma_{1}\sigma_{2}\sigma_{3}, \sigma_{2}\sigma_{3}\sigma_{1}\sigma_{2}, $\\      
	
	  $\sigma_{\bar 1},  \sigma_{\bar 1} \sigma_{2},  \sigma_{2}\sigma_{\bar 1},  \sigma_{3}\sigma_{\bar 1}, \sigma_{3}\sigma_{\bar 1}\sigma_{2}, \sigma_{3}\sigma_{2}\sigma_{\bar 1}, \sigma_{2}\sigma_{3}\sigma_{\bar 1}, \sigma_{\bar 1}\sigma_{2}\sigma_{3}, \sigma_{2}\sigma_{3}\sigma_{\bar 1}\sigma_{2},$\\
 
 $\sigma_{1} \sigma_{\bar 1} , \sigma_{3}\sigma_{1} \sigma_{\bar 1},\sigma_{1} \sigma_{\bar 1}\sigma_{2}, \sigma_{1} \sigma_{2}\sigma_{\bar 1}, \sigma_{2}\sigma_{1} \sigma_{\bar 1}, \sigma_{\bar 1}\sigma_{2} \sigma_{1}, \sigma_{2}\sigma_{1} \sigma_{\bar 1}\sigma_{2}, \sigma_{3}\sigma_{2}\sigma_{1} \sigma_{\bar 1}\sigma_{2}, \sigma_{2}\sigma_{1} \sigma_{\bar 1}\sigma_{2}\sigma_{3}, $\\	

$\sigma_{1}\sigma_{\bar 1}\sigma_{2}\sigma_{3}, \sigma_{1} \sigma_{2}\sigma_{3}\sigma_{\bar 1}, \sigma_{\bar 1} \sigma_{2}\sigma_{3}\sigma_{1},  \sigma_{3}\sigma_{\bar 1} \sigma_{1}\sigma_{2}, \sigma_{2} \sigma_{3}\sigma_{1}\sigma_{\bar 1}, \sigma_{3} \sigma_{2}\sigma_{1}\sigma_{\bar 1} , \sigma_{3} \sigma_{1}\sigma_{2}\sigma_{\bar 1},  \sigma_{3} \sigma_{\bar 1}\sigma_{2}\sigma_{1}$\\
 
 $ \sigma_{2} \sigma_{3}\sigma_{1} \sigma_{2}\sigma_{\bar 1}, \sigma_{2}\sigma_{3}\sigma_{\bar 1}\sigma_{2} \sigma_{1}, \sigma_{1}  \sigma_{2}\sigma_{3}\sigma_{\bar 1} \sigma_{2}, \sigma_{\bar 1} \sigma_{2}\sigma_{3}\sigma_{1} \sigma_{2},  \sigma_{2}\sigma_{3}\sigma_{1}\sigma_{\bar 1} \sigma_{2},$\\

$ \sigma_{1}\sigma_{2}\sigma_{3} \sigma_{\bar 1}\sigma_{2}\sigma_{1},\sigma_{\bar 1} \sigma_{2}\sigma_{3}\sigma_{1}\sigma_{2} \sigma_{\bar 1} , \sigma_{3}\sigma_{2}\sigma_{1} \sigma_{\bar 1}\sigma_{2}\sigma_{3} $.\\
	
	\clearpage

\section{Elements in  $W^{c}(\tilde{B}_{4} )$ of positive affine length.}\label{exempleB4tilde}

  We   list the elements in $W^{c}(\tilde{B}_{4} )$ of positive affine length.  
  \begin{itemize}
\item   First  type elements:  
$$   c \  t_{4}(\sigma_{3}\sigma_{2}\sigma_{1}\sigma_{\bar 1}\sigma_{2}\sigma_{3} \;  t_{4} )^h \  d \qquad \text{with } \     \left\{\begin{matrix}   h \ge 1,  \cr 
    \text{if } c=1, (\text{ say } p=0)  \text{then }h\ge 2 . \cr
    c  \in  1, \sigma_{3}, \sigma_{2}\sigma_{3}, \sigma_{1}\sigma_{2}\sigma_{3}, \sigma_{\bar 1}\sigma_{2}\sigma_{3}, \sigma_{1}\sigma_{\bar 1}\sigma_{2}\sigma_{3}, \cr
   \sigma_{2}\sigma_{1}\sigma_{\bar 1}\sigma_{2}\sigma_{3}, \sigma_{3}\sigma_{2}\sigma_{1}\sigma_{\bar 1}\sigma_{2}\sigma_{3},\cr
   d  \in   1, \sigma_{3}, \sigma_{3}\sigma_{2}, \sigma_{3}\sigma_{2}\sigma_{1}, \sigma_{3}\sigma_{2}\sigma_{\bar 1}, \sigma_{3}\sigma_{2}\sigma_{1}\sigma_{\bar 1},\cr
    \sigma_{3}\sigma_{2}\sigma_{1}\sigma_{\bar 1}\sigma_{2}, \sigma_{3}\sigma_{2}\sigma_{1}\sigma_{\bar 1}\sigma_{2}\sigma_{3},\cr 
  \end{matrix}\right.
 $$
 
\item   Second type elements:   
$$   a \  ( \sigma_{i_k}\sigma_{2}\sigma_{3} \; t_{4}  )^k  \  b    \   \qquad \text{with } \     \left\{\begin{matrix} k \ge 1,  i_1= \bar 1 \cr 
   \text{if } a=1 , \text{then }k\ge 2, \cr     
\text{if }a \in  t_4,  \sigma_{3}t_4, \sigma_{2}\sigma_{3}t_4 , \sigma_{1}\sigma_{2}\sigma_{3}t_4 \text{ then } k\ge 1,\cr 
 \text{if } k=1 \text{ then } b \in 1, \sigma_{1}, \sigma_{3}, \sigma_{1}\sigma_{2}, \sigma_{3}\sigma_{2}, \sigma_{3}\sigma_{1},\cr \sigma_{3}\sigma_{1}\sigma_{2}, \sigma_{3}\sigma_{2}\sigma_{1}, \sigma_{1}\sigma_{2}\sigma_{3}, \sigma_{3}\sigma_{2}\sigma_{\bar 1}, \sigma_{1}\sigma_{2}\sigma_{\bar 1},\cr
  \sigma_{1} \sigma_{2} \sigma_{3} \sigma_{\bar 1},  \sigma_{3} \sigma_{2} \sigma_{1} \sigma_{\bar 1},  \sigma_{3} \sigma_{1} \sigma_{2} \sigma_{\bar 1},\cr
   \sigma_{3} \sigma_{2} \sigma_{1} \sigma_{\bar 1} \sigma_{2},  \sigma_{1} \sigma_{2} \sigma_{3} \sigma_{\bar 1} \sigma_{2},\cr
    \sigma_{1} \sigma_{2} \sigma_{3} \sigma_{\bar 1} \sigma_{2} \sigma_{1}, \sigma_{3} \sigma_{2} \sigma_{1} \sigma_{\bar 1} \sigma_{2}  \sigma_{3},\cr
     \text{if }a\in \sigma_{\bar 3}\sigma_{2}\sigma_{3}\sigma_{\bar 1}\sigma_{2}\sigma_{\bar 3},  \sigma_{1}\sigma_{\bar 3}\sigma_{2}\sigma_{3}\sigma_{\bar 1}\sigma_{2}\sigma_{\bar 3}, \text{then } k\ge 0,\cr  
   \text{here if  } k=0  \text{ then } b \in  1,  \sigma_1,  \sigma_1\sigma_{2},  \sigma_1\sigma_{2}\sigma_{\bar 1},  \sigma_1\sigma_{2}\sigma_{3}\sigma_{\bar 1},  \cr
    \sigma_1\sigma_{2}\sigma_{3}\sigma_{\bar 1},  \sigma_1\sigma_{2}\sigma_{3}\sigma_{\bar 1}\sigma_{2},  \sigma_1\sigma_{2}\sigma_{3}\sigma_{\bar 1}\sigma_{2}\sigma_{1}. \cr  
    \text{if } a\in  t_4 \sigma_{1} \sigma_{\bar 1} \sigma_{2}  \sigma_{3}t_4,  \sigma_{3}t_4 \sigma_{1} \sigma_{\bar 1} \sigma_{2}  \sigma_{3}t_4,  \sigma_{2} \sigma_{3}t_4\sigma_{1} \sigma_{\bar 1} \sigma_{2}  \sigma_{3}t_4,\cr 
    t_4\sigma_{2} \sigma_{1} \sigma_{\bar 1} \sigma_{2}  \sigma_{3}t_4,  \sigma_{3}t_4 \sigma_{2}\sigma_{1} \sigma_{\bar 1} \sigma_{2}  \sigma_{3}t_4,\text{ then } k=0 \text{ and }  b=1,\cr
    \text{if } a\in  t_4 \sigma_{2}  \sigma_{3}t_4,  \sigma_{3}t_4  \sigma_{2}  \sigma_{3}t_4, \cr
    \text{ and } k=0 \text{ then } b \in  1,  \sigma_1,  \sigma_1\sigma_{2},  \sigma_1\sigma_{2}\sigma_{3}, \sigma_{\bar 1},  \sigma_{\bar 1}\sigma_{2},  \sigma_{\bar 1}\sigma_{2}\sigma_{3}, \cr 
    \sigma_{1}\sigma_{\bar 1}, \sigma_{1}\sigma_{\bar 1}\sigma_{2}, \sigma_{\bar 1}\sigma_{2}\sigma_{1}, \sigma_{1}\sigma_{2}\sigma_{\bar 1}, \sigma_1\sigma_{\bar 1}\sigma_{2}\sigma_{3},  \sigma_1\sigma_{2}\sigma_{3}\sigma_{\bar 1},\cr
    \sigma_{\bar 1}\sigma_2\sigma_{3}\sigma_{1}, \sigma_1\sigma_{2}\sigma_{3}\sigma_{\bar 1}\sigma_{2}, \sigma_{\bar 1}\sigma_{2}\sigma_{3}\sigma_{1}\sigma_{2},\cr
   \sigma_1\sigma_{2}\sigma_{3}\sigma_{\bar 1}\sigma_{2}\sigma_{1},  \sigma_{\bar 1}\sigma_{2}\sigma_{3}\sigma_{1}\sigma_{2}\sigma_{\bar 1}.\end{matrix}\right.
 $$

In addition to $\psi_1 (w)$ in this type.\\

\item  Elements of affine length $1$:   
$$  \qquad   e t_4 f   \quad \text{with } \     \left\{\begin{matrix} 
\text{either }  e \in 1,  \sigma_3 \text{ and }   f \in  W( D_{4 }), \qquad \qquad 
 \cr 
 \text{either }  e = \sigma_{2}\sigma_3 \text{ and }   f \in  W( D_{4 }) - \sigma_{2}, \sigma_{3}\sigma_{2}, \sigma_{1}\sigma_{2}, \sigma_{2}\sigma_{3}\sigma_{1},\sigma_{2}\sigma_{3}\sigma_{2} \sigma_{1}, \cr
  \sigma_{2}\sigma_{\bar 1}, \sigma_{2}\sigma_{3}\sigma_{\bar 1}, \sigma_{2}\sigma_{3}\sigma_{\bar 1}\sigma_{2}, \sigma_{2}\sigma_{1}\sigma_{\bar 1}, \sigma_{2}\sigma_{1}\sigma_{\bar 1}\sigma_{2},\sigma_{2}\sigma_{1}\sigma_{\bar 1}\sigma_{2} \sigma_{3},\cr
   \sigma_{2}\sigma_{3}\sigma_{1}\sigma_{\bar 1}, \sigma_{2}\sigma_{3}\sigma_{1}\sigma_{2}\sigma_{\bar 1}, \sigma_{2}\sigma_{3}\sigma_{\bar 1}\sigma_{2}\sigma_{1}, \sigma_{2}\sigma_{3}\sigma_{1}\sigma_{\bar 1}\sigma_{2},\cr 
   \text{either }  e \in \sigma_{1}\sigma_{2}\sigma_{3}, [  \sigma_{\bar 1}\sigma_{2}\sigma_{3} ]\text{ and }   f \in 1, \sigma_{3}, \sigma_{3}\sigma_{2}, \sigma_{3}\sigma_{2}\sigma_{1}, \cr
    \sigma_{\bar 1}, \sigma_{\bar 1}\sigma_{2}, \sigma_{3}\sigma_{\bar 1}, \sigma_{3}\sigma_{\bar 1}\sigma_{2}, \sigma_{3}\sigma_{2}\sigma_{\bar 1}, \sigma_{\bar 1}\sigma_{2}\sigma_{3}, \cr
    \sigma_{\bar 1}\sigma_{2}\sigma_{1}, \sigma_{\bar 1}\sigma_{2}\sigma_{3}\sigma_{1}, \sigma_{3}\sigma_{2}\sigma_{1}\sigma_{\bar 1}, \sigma_{3}\sigma_{\bar 1}\sigma_{2}\sigma_{1}, \sigma_{3}\sigma_{2}\sigma_{1}\sigma_{\bar 1}\sigma_{2},\cr
    \sigma_{\bar 1}\sigma_{2}\sigma_{3}\sigma_{1}\sigma_{2}, \sigma_{\bar 1}\sigma_{2}\sigma_{3}\sigma_{1}\sigma_{2}\sigma_{\bar 1},\sigma_{3}\sigma_{2}\sigma_{1} \sigma_{\bar 1}\sigma_{2}\sigma_{3}, \text{ and } \psi_1  [f],   \qquad \qquad 
 \cr
\text{ or } e \in  \sigma_{1} \sigma_{\bar 1}\sigma_{2}\sigma_{3}, \sigma_{2}\sigma_{1} \sigma_{\bar 1}\sigma_{2}\sigma_{3}, \sigma_{3}\sigma_{2}\sigma_{1} \sigma_{\bar 1}\sigma_{2}\sigma_{3} \text{ and }  f \in  1, \sigma_{3}\sigma_{2},\cr
 \sigma_{3}\sigma_{2}\sigma_{1}, \sigma_{3}\sigma_{2} \sigma_{\bar 1}, \sigma_{3}\sigma_{2}\sigma_{1} \sigma_{\bar 1}, \sigma_{3}\sigma_{2}\sigma_{1} \sigma_{\bar 1}\sigma_{2}, \sigma_{3}\sigma_{2}\sigma_{1} \sigma_{\bar 1}\sigma_{2}\sigma_{3}. \\
\end{matrix}\right.$$
  \end{itemize}

{\rm Notice that if $h$ and $k$ were allowed to be null,  i,.e one of the two types was allowed to contain elements of affine length equals to one, then  $\sigma_1\sigma_2\sigma_3 t_4 \sigma_3$ could be obtained in two different ways:  $ a= \sigma_1\sigma_2\sigma_3 t_4, b=\sigma_3$ with  $k=0$, and $ c = \sigma_1\sigma_2\sigma_3, d= \sigma_3 $ with $h=0$.}
 
    \clearpage
\section{Elements in $W^{c}(\tilde{D}_{4} )$ of positive affine length.}\label{exempleD4tilde}  

  We   list the elements in $W^{c}(\tilde{D}_{4} )$ of positive affine length. 
  \begin{itemize}
\item   First  type elements:  
$$   
\begin{aligned}
& (\sigma_{\bar 3})^\epsilon (c= \langle   i,n-1 ] )  \sigma_{\bar 3}(\sigma_3  \sigma_{2} \sigma_{1} \sigma_{\bar 1} \sigma_{2} \;   \sigma_{\bar 3} )^h  \  d \qquad \text{with } \     \\  
&\left\{\begin{matrix}  
    c  \in  \left\{ 1, \sigma_{2},  \sigma_{1} \sigma_{2},  \sigma_{\bar 1} \sigma_{2},  \sigma_{1} \sigma_{\bar 1} \sigma_{2},\sigma_{2} \sigma_{1} \sigma_{\bar 1} \sigma_{2}\right\}  ,  \cr
 \epsilon =1 \text{iff }   c= \sigma_{2} \sigma_{1} \sigma_{\bar 1} \sigma_{2}, \text{here } h \ge0\cr
    \text{if } c=1 , \text{then }h\ge 2, \text{(with }q p=0 \text{\bf ) }  \cr
   d  \in  1,  \sigma_{3}, \sigma_{3}\sigma_{2}, \sigma_{3}\sigma_{2}\sigma_{1}, \sigma_{3}\sigma_{2}\sigma_{\bar 1},\sigma_{3} \sigma_{2}\sigma_{1}\sigma_{\bar 1} , \cr
   \sigma_{3}\sigma_{2}\sigma_{1} \sigma_{\bar 1}\sigma_{2}, \sigma_{3}\sigma_{2}\sigma_{1} \sigma_{\bar 1}\sigma_{2}\sigma_{3}. 
\end{matrix}\right.
\end{aligned}
 $$
 
\item   Second type elements:   
$$   a \   ( \sigma_{1} \sigma_{2} \sigma_{3}  \sigma_{\bar 1}\sigma_{2}   \; \sigma_{\bar 3} )^k  \  b    \   \qquad \text{with } \     \left\{\begin{matrix} \text{if } a=1 , \text{then }k\ge 2, \cr 
    \text{if }a \in  \sigma_{2}\sigma_{\bar 3} , \sigma_{3}\sigma_{2}\sigma_{\bar 3}, \cr 
    \sigma_{\bar 1} \sigma_{2}\sigma_{\bar 3} , \sigma_{3}\sigma_{\bar 1}\sigma_{2}\sigma_{\bar 3} ,  \sigma_{2}\sigma_{3}\sigma_{\bar 1}\sigma_{2}\sigma_{\bar 3},  \text{then } k\ge 1,\cr 
     \text{if }a\in \sigma_{\bar 3}\sigma_{2}\sigma_{3}\sigma_{\bar 1}\sigma_{2}\sigma_{\bar 3},  \sigma_{1}\sigma_{\bar 3}\sigma_{2}\sigma_{3}\sigma_{\bar 1}\sigma_{2}\sigma_{\bar 3}, \text{then } k\ge 0,\cr  
   \text{if } k\ge 0  \text{ then } b \in  1,  \sigma_1,  \sigma_1\sigma_{2},  \sigma_1\sigma_{2}\sigma_{\bar 1},  \sigma_1\sigma_{2}\sigma_{3}\sigma_{\bar 1},  \cr
    \sigma_1\sigma_{2}\sigma_{3}\sigma_{\bar 1},  \sigma_1\sigma_{2}\sigma_{3}\sigma_{\bar 1}\sigma_{2},  \sigma_1\sigma_{2}\sigma_{3}\sigma_{\bar 1}\sigma_{2}\sigma_{1}.   
  \end{matrix}\right.
 $$

In addition to $\psi_1 (w)$ in this type.\\

\item  Elements of affine length $1$:   
$$  \qquad   e \sigma_{\bar 3}f   \quad \text{with } \     \left\{\begin{matrix} 
\text{either }  e = 1 \text{ and }   f \in W^{c}(D_{4} ), \qquad \qquad 
 \cr 
 \text{either }  e =  \sigma_{2}\text{ and }   f \in W^{c}(D_{4} )-  \sigma_{2},  \sigma_{2} \sigma_{1},  \sigma_{2} \sigma_{3}, \sigma_{2} \sigma_{3} \sigma_{1},   \cr 
 \sigma_{2} \sigma_{3} \sigma_{1} \sigma_{2},  \sigma_{2} \sigma_{\bar 1}, \sigma_{2} \sigma_{3} \sigma_{\bar 1},  \sigma_{2} \sigma_{3} \sigma_{\bar 1} \sigma_{2},  
  \sigma_{2} \sigma_{1} \sigma_{\bar 1},  \cr 
  \sigma_{2} \sigma_{1} \sigma_{\bar 1} \sigma_{2},  \sigma_{2} \sigma_{1} \sigma_{\bar 1} \sigma_{2} \sigma_{3},  
  \sigma_{2} \sigma_{3} \sigma_{1} \sigma_{\bar 1},  \cr  
  \sigma_{2} \sigma_{3} \sigma_{1} \sigma_{2} \sigma_{\bar 1}, \sigma_{2} \sigma_{3} \sigma_{\bar 1} \sigma_{2} \sigma_{1}, \sigma_{2} \sigma_{3} \sigma_{1} \sigma_{\bar 1} \sigma_{2}\qquad \qquad 
 \cr 
 \text{either }  e \in  \sigma_{1}\sigma_{2},  [ \sigma_{\bar 1} \sigma_{2}] \text{ and }   f \in 1, \sigma_{3}, \sigma_{3} \sigma_{2},  \sigma_{3}\sigma_{2}\sigma_{1}, \sigma_{\bar 1}\cr
  \sigma_{\bar 1} \sigma_{2},   \sigma_{3}\sigma_{\bar 1},   \sigma_{3} \sigma_{\bar 1}\sigma_{2},   \sigma_{3} \sigma_{2}\sigma_{\bar 1}, \sigma_{\bar 1}\sigma_{2}\sigma_{3},  \sigma_{\bar 1}\sigma_{2}\sigma_{1}, \cr
    \sigma_{3}\sigma_{2}\sigma_{1}\sigma_{\bar 1}\sigma_{2},  \sigma_{\bar 1}\sigma_{2}\sigma_{3}\sigma_{1},  \sigma_{3}\sigma_{2}\sigma_{1}\sigma_{\bar 1},  \sigma_{3}\sigma_{\bar 1}\sigma_{2}\sigma_{1}\cr
    \sigma_{\bar 1}\sigma_{2}\sigma_{3}\sigma_{1}\sigma_{2}\sigma_{\bar 1},  \sigma_{3}\sigma_{2}\sigma_{1}\sigma_{\bar 1}\sigma_{2}\sigma_{3},  \text{ and } [ \psi_1 (f)], \qquad \qquad 
 \cr 
 \text{either }  e =  \sigma_{3} \sigma_{2}\text{ and }   f \in 1, \sigma_{1},  \sigma_{1}\sigma_{2}\sigma_{3}, \sigma_{\bar 1}\cr
  \sigma_{\bar 1} \sigma_{2},   \sigma_{\bar 1}\sigma_{2}\sigma_{3},   \sigma_{1} \sigma_{\bar 1},   \sigma_{1} \sigma_{\bar 1}\sigma_{2},\sigma_{1} \sigma_{2}\sigma_{\bar 1},  \sigma_{\bar 1}\sigma_{2}\sigma_{1}, \cr
    \sigma_{1}\sigma_{\bar 1}\sigma_{2}\sigma_{3},  \sigma_{1}\sigma_{2}\sigma_{3}\sigma_{\bar 1},  \sigma_{\bar 1}\sigma_{2}\sigma_{3}\sigma_{1},  \sigma_{1}\sigma_{2}\sigma_{3}\sigma_{\bar 1}\sigma_{2}\cr
    \sigma_{\bar 1}\sigma_{2}\sigma_{3}\sigma_{1}\sigma_{2},  \sigma_{1}\sigma_{2}\sigma_{3}\sigma_{\bar 1}\sigma_{2}\sigma_{1},  \sigma_{\bar 1}\sigma_{2}\sigma_{3}\sigma_{1}\sigma_{2}\sigma_{\bar 1}, \qquad \qquad 
 \cr 
 \text{either }  e \in  \sigma_{3} \sigma_{1}\sigma_{2},  \sigma_{2}\sigma_{3} \sigma_{1}\sigma_{2},  [\sigma_{3} \sigma_{\bar 1}\sigma_{2},  \sigma_{2}\sigma_{3} \sigma_{\bar 1}\sigma_{2}]\text{ and }   f \in 1,  \sigma_{\bar 1}\cr
  \sigma_{\bar 1} \sigma_{2},   \sigma_{\bar 1}\sigma_{2}\sigma_{3},   \sigma_{\bar 1}\sigma_{2}\sigma_{1}    \sigma_{\bar 1}\sigma_{2}\sigma_{3}\sigma_{1}, \cr
    \sigma_{\bar 1}\sigma_{2}\sigma_{3}\sigma_{1}\sigma_{2},  \sigma_{\bar 1}\sigma_{2}\sigma_{3}\sigma_{1}\sigma_{2}\sigma_{\bar 1},  \text{ and }   [\psi_1 (f)], \qquad \qquad 
 \cr 
 \text{either }  e \in  \sigma_{\bar 1}\sigma_{1} \sigma_{2},  \sigma_{2}\sigma_{\bar 1}\sigma_{1} \sigma_{2},   \sigma_{3}\sigma_{\bar 1}\sigma_{1} \sigma_{2},  \sigma_{3} \sigma_{2}\sigma_{\bar 1}\sigma_{1} \sigma_{2}\text{ and }   f \in 1, \cr
 \sigma_{3}, \sigma_{3} \sigma_{2},  \sigma_{3}\sigma_{2}\sigma_{1}, \sigma_{3} \sigma_{2}\sigma_{\bar 1},\cr
      \sigma_{3}\sigma_{2}\sigma_{1}\sigma_{\bar 1}\sigma_{2}, \sigma_{3}\sigma_{2}\sigma_{1}\sigma_{\bar 1}, 
    \sigma_{3}\sigma_{2}\sigma_{1}\sigma_{\bar 1}\sigma_{2}\sigma_{3}, \qquad \qquad 
 \cr 
 \text{either }  e \in \sigma_{1} \sigma_{2}\sigma_{3}\sigma_{\bar 1} \sigma_{2},  [\sigma_{\bar 1} \sigma_{2}\sigma_{3}\sigma_{1} \sigma_{2}]\text{ and }   f \in 1, \cr
 \sigma_{1} , \sigma_{1}\sigma_{2}, \sigma_{1}\sigma_{2}\sigma_{3}, \sigma_{1}\sigma_{2}\sigma_{\bar 1}, \sigma_{1}\sigma_{2}\sigma_{3}\sigma_{\bar 1}, \cr
  \sigma_{1}\sigma_{2}\sigma_{3}\sigma_{\bar 1}\sigma_{2}, \sigma_{1}\sigma_{2}\sigma_{3}\sigma_{\bar 1}\sigma_{2}\sigma_{1}, \text{ and }   [\psi_1 (f)], \qquad \qquad 
 \cr 
\text{ or } e =  \sigma_{2}\sigma_{3} \sigma_{1}\sigma_{\bar 1} \sigma_{2} \text{ and }  f =1  . 
\end{matrix}\right.$$
  \end{itemize}

{\rm Notice that if $h$ and $k$ were allowed to be null, thus if the first and second types where allowed to be of affine length smaller than two then we would loose the unicity of the form. For example  then  $ \sigma_{2}\sigma_{\bar 3}$ could be obtained in two different ways:  $ a=  \sigma_{2}\sigma_{\bar 3}, b= 1$, $k=0$, and 
$ c =  \sigma_{2} , d= 1 $, $h=0$.}

		\vspace{1cm}

 
\renewcommand{\refname}{REFERENCES}


\begin{thebibliography}{}
\bibitem{Sadek_Thesis} S. Al Harbat. On the affine braid group, affine Temperley-Lieb algebra and Markov trace. PH.D
Thesis, Universit\'e Paris-Diderot-Paris 7, 2013. \label{Sadek_Thesis}


\bibitem{Sadek_2013_2} S. Al Harbat. Markov trace on a tower of affine Temperley-Lieb algebras of type $\tilde{A }$. 
  J.   Knot Theory Ramifications   24 (2015) 1--28. 
DOI : 10.1142/S0218216515500492



\bibitem{Sadek_2016}  S. Al Harbat.   
    Tower of fully commutative elements of type $\tilde A$ and applications.  J. Algebra 465 (2016), 111--136.  

\bibitem{Sadek_2017}  S. Al Harbat.  
On the fully commutative elements of type $\tilde C$ and
faithfulness of related towers. 
 J. Algebraic Combin. 45 (2017), no. 3, 803--824. 


\bibitem{Bourbaki_1981} N. Bourbaki. Groupes et alg\`ebres de Lie, chapitres 4, 5, 6. Masson, 1981. \label{Bourbaki_1981}

\bibitem{Deo}  V.V. Deodhar.  
A note on subgroups generated by reflections in Coxeter groups.  Arch. Math., Vol. 53 (1989), 543--546.
 



\bibitem{Graham} J.J. Graham. Modular representations of Hecke algebras and related algebras. Ph.D. Thesis, University of Sydney, 1995.\label{Graham}   



\bibitem{Green2009}  R.M. Green. 
Leading coefficients of Kazhdan-Lusztig polynomials and fully commutative elements. 
J. Algebraic Combin. 30 (2009), no. 2, 165--171. 




\bibitem{Lusztig}
G. Lusztig. 
Hecke algebras with unequal parameters.
CRM Monograph Series, 18. American Mathematical Society, Providence, RI, 2003.   
 
\bibitem{MW} T.J. McLarnan,  G.S.  Warrington.  
Counterexamples to the 0-1 conjecture.  
Represent. Theory 7 (2003), 181--195. 
 


\bibitem{St} J. R. Stembridge. Some combinatorial aspects of reduced words in finite Coxeter groups. Transactions A.M.S. 349 (4)  1997, 1285--1332.\label{St}



\end{thebibliography}
\end{document}